\newtheorem{theorem}{Theorem}
\newtheorem{corollary}[theorem]{Corollary}
\newtheorem{proposition}[theorem]{Proposition}
\newtheorem{lemma}[theorem]{Lemma}
\newtheorem{problem}[theorem]{Problem}
\theoremstyle{definition}
\newtheorem{definition}[theorem]{Definition}
\theoremstyle{remark}
\newcommand{\Z}{\mathbb{Z}}
\newcommand{\R}{\mathbb{R}}
\newcommand{\C}{\mathbb{C}}
\newcommand{\Heis}{\mathcal{H}}
\newcommand{\id}{\mathrm{id}}
\newcommand{\End}{\mathrm{End}}
\newcommand{\Cob}{\mathrm{Cob}}
\newcommand{\subalign}[1]{
  \vcenter{
    \Let@ \restore@math@cr \default@tag
    \baselineskip\fontdimen10 \scriptfont\tw@
    \advance\baselineskip\fontdimen12 \scriptfont\tw@
    \lineskip\thr@@\fontdimen8 \scriptfont\thr@@
    \lineskiplimit\lineskip
    \ialign{\hfil$\m@th\scriptstyle##$&$\m@th\scriptstyle{}##$\crcr
      #1\crcr
    }
  }
}
\def\clap#1{\hbox to 0pt{\hss#1\hss}}
\newcommand\doi[2]{\href{http://doi.org/#1}{#2}}
\newcommand{\CobL}{\mathrm{3Cob^{LC}}}
\newcommand{\bS}{\mathbb{S}}
\newcommand{\Ker}{\mathrm{Ker}} 
\begin{document}

\raggedbottom

\title{Abelian TQFTS and Schr\"odinger local systems}

\author[A. Andreev]{Aleksei Andreev}
\address{Institut f\"ur Mathematik,
Universit\"at Z\"urich,
Winterthurerstrasse 190,
CH-8057 Z\"urich.}
\email{aleksei.andreev@math.uzh.ch} 

\author[A. Beliakova]{Anna Beliakova} 
\address{Institut f\"ur Mathematik,
Universit\"at Z\"urich,
Winterthurerstrasse 190,
CH-8057 Z\"urich.}
\email{anna@math.uzh.ch}

\author[C. Blanchet]{Christian Blanchet} 
\address{Universit{\'e} Paris Cité and Sorbonne Universit{\'e}, CNRS, IMJ-PRG, F-75006 Paris, France} 
\email{christian.blanchet@imj-prg.fr}

\begin{abstract} In this paper  
we  construct an action of 3-cobordisms  on the finite dimensional Schr\"odinger 
representations of the  Heisenberg group
  by Lagrangian correspondences. 
  In addition,
we review the construction of the 
abelian Topological Quantum Field Theory (TQFT) associated with  a $q$-deformation of $U(1)$ for any root of unity $q$.
We prove that for
 3-cobor\-disms compatible with Lagrangian correspondences, 
  there is a normalization of the associated 
 Schr\"odinger bimodule action that reproduces the abelian TQFT. 
% Restricting to mapping cylinders,
%our construction yields a projective representation of the mapping class groupoid $\mathrm{Mod}(\Sigma)$, {\color{red} which  is linearizable.
%Finally, we define another projective 
%action of $\mathrm{Mod}(\Sigma)$ on  Schr\"odinger representations,
%which is compatible with local systems on $\Sigma$. We show that the latter is not linearizable by  identifying the related 2-cocycle.}

The full abelian TQFT provides a projective representation of the mapping class group $\mathrm{Mod}(\Sigma)$ on the Schrödinger representation,
which is linearizable at odd root of 1. Motivated by homo\-logy of surface configurations with Schrödinger representation as local coefficients, we define another 
projective action of $\mathrm{Mod}(\Sigma)$ on Schrödinger representations. We show that the latter is not linearizable by identifying the associated 2-cocycle.

%We show that their linearizations do not coincide by analysing the corresponding $2$-cocycles.
\end{abstract}

\maketitle
%\marginpar{\color{blue} If it is too much in the abstract just remove the last 2 sentences}

  \hspace{8cm}{\em To the memory of Vaughan Jones,}

\vspace{-0.5mm}\hspace{8cm}{\em  the founder of quantum topology.}

\vspace{.5cm}

\section{Introduction}
The discovery of the Jones polynomial revolutionized low--dimensional topology.  The  new link invariants constructed by Jones, Kauffman,
HOMFLY-PT, Reshetikhin--Turaev etc. were extended to  
mapping class group representations, later shown to be asymptotically faithful, and  to  3-manifold invariants. These developments have reached their peak in  constructions of Topological Quantum Field Theories (TQFTs) 
\cite{Tu, BHMV}. The scope of ideas initiated by 
Vaughan Jones built the foundations
  for the new domain of mathematics -- the quantum topology.
One of the main open problems in quantum topology is to understand the topological nature of quantum invariants.

In the 90s, 
Lawrence \cite{Lawrence1990}   initiated a program aiming at homological 
%\marginpar{\color{blue}I would  put Bigelow in different sentence , his Phd was 2000, the model for Jones is 2001}
interpretation of quantum invariants.  In 2001 Bigelow 
\cite{BigelowJones}
was able to read the Jones polynomial
from the intersection pairing on the twisted homology of the  configuration space $\mathrm{Conf}_n(\mathbb D^2_m)$ of $n$ points in
 $m$-punctured disc $\mathbb D^2_m$. This construction led to 
a   family  of  representations (indexed by $n$)
of the braid group $B_m$, that recovers for $n=1$ the Burau representation.
A spectacular achievement was the proof by Bigelow \cite{Bigelow2001} and Krammer \cite{Krammer2002} that
this braid group representation for $n=2$  is faithful,  showing the
 linearity of the braid group.
Bigelow's construction was extended later  to other quantum link invariants (see \cite{Bigelow2007, CristinaColjones, CristinaColalex, Martel} and references thereof).

Recently  homological mapping class group representations were constructed by
the third author together with Palmer and Shaukat \cite{HeisenbergHomology}.
The idea here was to use a {\it Heisenberg} cover  
%$\widetilde{\mathrm{Conf}}_n(\Sigma)$
of the space ${\mathrm{Conf}}_n(\Sigma)$
of unordered $n$ configurations in a surface $\Sigma$ with one boundary component, whose
group of deck transformations is the {\em Heisenberg group} $\Heis(\Sigma)$.
Recall that 
$\Heis(\Sigma)=\Z\times H_1(\Sigma, \Z)$ has the group law
$$ (k,x)(l,y)=(k+l+\,x.y,x+y)$$
where $x.y$ is the intersection pairing.
In detail, 
the surface braid group  $B_n(\Sigma):=\pi_1(\mathrm{Conf}_n(\Sigma))$ surjects onto $\Heis(\Sigma)$ (see Section \ref{sec.4.1}). The kernel of this map is a characteristic normal subgroup
of the surface braid group  by \cite[Prop.8]{HeisenbergHomology}, which determines
 the Heisenberg cover $\widetilde{\mathrm{Conf}}_n(\Sigma)$.

Since the group of  deck transformations $\Heis(\Sigma)$
 acts  on the chain groups of the Heisenberg cover,
%of $\mathrm{Conf}_n(\Sigma)$, 
any $\Heis(\Sigma)$-modu\-le $M$  can be used 
to construct a  twisted homology 
as follows.  We first extend 
the  action of the group to its group algebra
$ \C[\Heis]\to \End(M)$
by linearity and then construct
 a complex $$C_\bullet\left(\widetilde{\mathrm{Conf}}_n(\Sigma)\otimes_{\C[\Heis(\Sigma)]} M\right).$$ 
%where $\C[B_n(\Sigma)]$ is the group algebra of the surface braid group. 
Its homology, known 
as twisted homology of $\mathrm{Conf}_n(\Sigma)$ with coefficients in $M$, is denoted by $H_\bullet(\mathrm{Conf}_n(\Sigma),M)$
(compare \cite[Ch.  5]{Davis}, \cite[Ch.  3.H]{Hatcher}).
%(defined in Section 3). 
An interesting choice of $M$ provides a finite dimensional 
Schr\"odinger representation $W_q(L)$
of a finite quotient 
%$$\Heis_p(\Sigma)= \Z_p \times H_1(\Sigma,\Z_{p'})$$
of $\Heis(\Sigma)$,
which depends on a choice of
a Lagrangian $L\subset H_1(\Sigma, \Z)$ and a root of unity $q$.
If the order of $q$ is odd, the resulting mapping class group representations were recently 
shown to contain the 
quantum representations arising from the non-semisimple TQFT for the small quantum
$\mathfrak{sl}_2$  by De Renzi and Martel \cite{dRM}. 
In particular, they defined the action of the quantum $\mathfrak{sl}_2$
on the Schr\"odinger homology explicitly and showed that it commutes with the action of the
mapping class group.
%\reversemarginpar{\color{blue} the Schr\"odinger homological representations  contain ...}

To complete Lawrence--Bigelow program 
we are lacking  homological interpretation of quantum 3-manifold invariants
and of the action of 3-cobordisms on  Schr\"odinger homologies. This paper is a first step in this direction. 
%Let us recall that 
Here we construct an action of 3-cobordisms on Schr\"odinger 
representations
 by Lagrangian correspondences.
In addition, we show that
 on a certain subcategory of extended 3-cobordisms and after a suitable normalization
this action  recovers the abelian TQFT.

Abelian TQFTs are  functorial extensions of   $3$-manifold
invariants constructed by Murakami--Ohtsuki--Okada from linking matrices \cite{MOO}.
Their connections with theta functions and Schr\"odinger representations, 
in the case when the quantum parameter (called $t$ in these papers) is a  root of unity of order divisible by $4$, 
were extensively studied by Gelca and collaborators
\cite{Gelca_Uribe,Gelca_Hamilton,Gelca_Hamilton2,Gelca}.
%An exposition 
%In these papers invariants of closed 3-manifolds and projective Mapping Class Group representations are constructed 
%v2
 Here we work with an arbitrary root of unity. We show that
  interesting cases 
 are if the order is either odd or divisible by $4$.
% When the order of the quantum parameter is divisible by $4$, 
In the latter case
we complete the work of
Gelca and al.  by constructing  TQFTs via
{\it modularization functor}. In addition, we discuss refined TQFTs
corresponding to the choice of a spin structure or a first cohomology class
on 3-manifolds.

Our preferred cobordism category is the Crane--Yetter category
$3\Cob$ of connected oriented 3-cobordisms between connected surfaces with one boundary component and with the boun\-dary connected sum as the monoidal structure. This category has a
beautiful algebraic presentation: it is monoidally gene\-rated by the 
Hopf algebra object --- the torus with one boundary component \cite{BBDP, BobtchevaPiergallini}.
By the result of \cite{BD}, for any
  finite unimodular  ribbon category $\mathcal C$, there exists a monoidal
 TQFT functor from $3\Cob^\sigma$ to $\mathcal{C}$ determined  by sending the torus with one boundary component to the terminal object  of $\mathcal C$, called the {\it end}.
 Here $3\Cob^\sigma$ is the category of {\it extended}
 3-cobordisms, whose objects are connected surfaces with one boundary component equipped with a choice of Lagrangian, and morphisms are 3-cobordisms equipped with  natural numbers 
 called weights.
 %v2 
 The composition includes a correction term given by a Maslov index. 
 Note that if $\mathcal C$ is the category of modules over a unimodular ribbon Hopf algebra $H$, then the {\it end} of $\mathcal C$ is the adjoint 
 representation $(H, \rhd)$ 
where $\rhd$ denotes the adjoint action. 
%\marginpar{\color{blue}may be  $\mathrm{end}(H\mathrm{-mod)}=H^{ad}$ is the adjoint module}

%v2
Let us define a subcategory $3\Cob^{\rm{LC}}$ of  $3\Cob^\sigma$ having the same
objects, but a smaller set of morphisms.
A cobordism $C=(C,0)$ 
belongs to $3\Cob^{\rm{LC}}\!\!\left((\Sigma_-,L_-),(\Sigma_+,L_+)\right)$ if and only if   $L_+=L_C.L_-$ where
$$L_C.L_-=\{y\in H_1(\Sigma_+)\, | \, \exists \, x\in L_-,\  (x,y)\in L_C\}
\quad \text{and}$$
$$L_C=\mathrm{Ker}\left(i_*: H_1(\partial C, \Z)=H_1(-\Sigma_-,\Z)\oplus H_1(\Sigma_+,\Z) \to H_1 (C, \Z)\right) .$$
We say that $L_+$ is determined by the {\it Lagrangian correspondence} given by $C$. %which acts on  $L_-\subset H_1(\Sigma_-,\Z)$.
 
In subcategory  $3\Cob^{\rm{LC}}$ all anomalies vanish. We get a linear representation of the subgroup of the mapping class group fixing a Lagrangian. The full mapping class group is replaced by a groupoid whose objects are Lagrangians  and morphisms are compatible mapping classes. This {\em action groupoid} is a subcategory in $3\Cob^{\rm{LC}}$.
%\reversemarginpar{\color{blue} may be we should say more, here we mean the mapping class action groupoid with respect to action on lagrangian}

Assume $q\in \C$ is a primitive $p$-th root of unity of order 
%where $\kk$ is a field
 $p\geq 3$ and $p\not\equiv 2\pmod 4$. Let $p'=p$ if $p$ is odd, and $p'=p/2$ otherwise.
 We define the finite  Heisenberg group $\Heis_p(\Sigma)$
as a quotient of $\Heis(\Sigma)$ by the 
normal subgroup
$$I_p:=\left\{(pk, p'x)\;|\; k\in \Z, x\in H_1(\Sigma, \Z)\right\}.$$
%\reversemarginpar{\color{blue} This quotient is not defined if }
%$$\Heis_p(\Sigma)= \Z_p \times H_1(\Sigma,\Z_{p'})$$
The group $\Heis_p(\Sigma)$ is isomorphic to a $\Z_p$-extension of
$H_1(\Sigma,\Z_{p'})$, where
 we use the shorthand $\Z_p$ for  $\Z/p\Z$ (see Section
\ref{sec3.2} for more details).

% However to construct an isomorphism we need to choose a polarization
% $H_1(\Sigma,\Z_{p'})= L_{p} \oplus L^\vee_{p}$, where
For
a given  Lagrangian submodule $L\subset H_1(\Sigma,\Z)$, let $L_p=L\otimes \Z_{p'}\subset  H_1(\Sigma,\Z_{p'})$ 
and $\widetilde L_p=\Z_p\times L_p\subset \Heis_p(\Sigma)$ be a maximal abelian subgroup. 
\reversemarginpar
%\marginpar{\color{blue}For Maslov we need integral Lagrangian}
%Let $q$ be a primitive $p$-th root of unity. 
Denote by $\C_q$ a  $1$-dimensional representation of $\widetilde L_p$, where $(k,x)$ acts by $q^k$.  Then  inducing from 
$\C_q$ we obtain
$$W_q(L)=\C[\Heis_p(\Sigma)]\otimes_{\C[\widetilde L_p]} \C_q$$
 a ${p'}^g$-dimensional {\em Schr\"odinger} representation of 
  $\Heis_p(\Sigma)$. Note that as a $\C[\Heis_p(\Sigma)]$-module $W_q(L)$ is generated by $\mathbf{1} \in \C_q$. 
%\marginpar{\color{blue}the dimension is ${p'}^g$}
Given a cobordism in the category $3\Cob^{\rm{LC}}$, $C :(\Sigma_-,L_-)\rightarrow (\Sigma_+,L_+)$, with $L_+=L_C.L_-$, we have a Schr\"odinger representation $W(L_C)$ of the Heisenberg group $\Heis(\partial C)$ which can be considered as a 
$(\C[\Heis(\Sigma_+)],\C[\Heis(\Sigma_-)])$-bimodule, after identifying
the subgroup $\Heis(-\Sigma_-)\subset \Heis(\partial C)$  with $\Heis(\Sigma_-)^{op}$, and defining a right action of $\Heis(\Sigma_-)$ on $W_q(L_C)$   as the left action of the same element of $\Heis(-\Sigma_-)$. 

The main results of this paper can be formulated as follows.

%We show that 

\begin{theorem}\label{thm:main}
Assume $p\not\equiv 2\pmod 4$.
 For any cobordism $C$  from $(\Sigma_-,L_-)$ to
$(\Sigma_+,L_+)$ in $3\Cob^{\rm{LC}}$ 
 there exists an  isomorphism of $\Z[\Heis(\Sigma_+)]$-modules
 \[\psi_C: W_q(L_C)\otimes_{\C[\Heis_p(\Sigma_-)]} W_q(L_-)\xrightarrow{\sim}
  W_q(L_+)
  \]  sending $\mathbf{1} \otimes \mathbf{1}$ to $\mathbf{1}$.
  In addition, 
 the map
\begin{align*}
 F_C: W_q(L_-)& \to W_q(L_+)\\
 w& \mapsto  \psi_C(\mathbf{1}\otimes w).
\end{align*} 
   defines  a monoidal functor
$F:3\Cob^{\rm{LC}}\to \mathrm{Vect}_\C$ which associates with an 
object $(\Sigma,L)$ the finite dimensional Schr\"odinger representation
$W_q(L)$. 

% is isomorphic to
%. 
%For any $C \in 3\Cob^{\rm{LC}}\!\!\left((\Sigma_-,L_-),(\Sigma_+,L_+)\right)$
\end{theorem}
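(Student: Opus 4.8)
The plan is to construct the isomorphism $\psi_C$ explicitly, using the fact that both sides are cyclic $\C[\Heis_p(\Sigma_+)]$-modules generated by a distinguished vector $\mathbf{1}\otimes\mathbf{1}$ (resp.\ $\mathbf{1}$), and then to check functoriality by a gluing argument. First I would unpack the source module: since $W_q(L_C)=\C[\Heis_p(\partial C)]\otimes_{\C[\widetilde{(L_C)}_p]}\C_q$ and the right $\C[\Heis_p(\Sigma_-)]$-action is the left action of $\Heis_p(-\Sigma_-)\subset\Heis_p(\partial C)$, the relative tensor product $W_q(L_C)\otimes_{\C[\Heis_p(\Sigma_-)]}W_q(L_-)$ can be rewritten, by transitivity of induction, as a module induced along the inclusion $\widetilde{(L_C)}_p\cdot\widetilde{(L_-)}_p\hookrightarrow\Heis_p(\partial C)$ from the one-dimensional representation where $(k,x)$ acts by $q^k$ — here I would use $L_+=L_C.L_-$ to identify the image of $L_C\oplus L_-$ under $\Heis_p(\partial C)\to\Heis_p(\Sigma_+)$ (quotienting out $\Heis_p(\Sigma_-)$ acting freely from the right) with exactly $\widetilde{(L_+)}_p$. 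Counting dimensions on both sides gives ${p'}^{g_+}$, so once I exhibit a well-defined surjective $\C[\Heis_p(\Sigma_+)]$-linear map sending $\mathbf1\otimes\mathbf1\mapsto\mathbf1$, it is automatically an isomorphism.

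The key point making the map well-defined is that the stabilizer of $\mathbf1\otimes\mathbf1$ inside $\Heis_p(\Sigma_+)$ (acting through the bimodule structure) is contained in the stabilizer $\widetilde{(L_+)}_p$ of $\mathbf1\in W_q(L_+)$, and that the two stabilizing characters agree — both send $(k,x)\mapsto q^k$. This is where the Lagrangian correspondence hypothesis does real work: the relation $L_+=L_C.L_-$ together with the fact that $L_C$ is itself Lagrangian in $H_1(\partial C,\Z)$ (being the kernel of $i_*$, and Lagrangian by Poincaré–Lefschetz duality) guarantees that after the quotient the induced subgroup is again maximal abelian with the correct character. I would do the bookkeeping at the level of $\Heis$ and then pass to the finite quotient $\Heis_p$, checking that $I_p$ behaves compatibly under the maps $H_1(\partial C)\to H_1(C)$ and the inclusions of the boundary pieces; this compatibility follows because $I_p$ is defined purely in terms of $H_1$ with the standard scaling, and $L_C$ being a direct summand ensures $(L_C)_p = L_C\otimes\Z_{p'}$ is computed correctly.

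For functoriality, I would verify $F$ respects composition by a standard gluing argument for relative tensor products: given $C':(\Sigma_+,L_+)\to(\Sigma_0,L_0)$ and $C:(\Sigma_-,L_-)\to(\Sigma_+,L_+)$ in $3\Cob^{\rm{LC}}$, one has $L_{C'\circ C}.L_- = L_{C'}.(L_C.L_-)$ (transitivity of Lagrangian correspondences, which holds because in $3\Cob^{\rm LC}$ the composition has no Maslov correction) and an isomorphism $W_q(L_{C'\circ C})\cong W_q(L_{C'})\otimes_{\C[\Heis_p(\Sigma_+)]}W_q(L_C)$ as bimodules — this is essentially the same induction-transitivity computation applied to the cylinder $C'\cup_{\Sigma_+}C$, using that $L_{C'\circ C} = $ (the pushforward of) $L_{C'}\oplus L_C$ glued over $\Sigma_+$. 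Associativity of the relative tensor product then gives $\psi_{C'\circ C}=\psi_{C'}\circ(\id\otimes\psi_C)$ up to the canonical identifications, hence $F_{C'\circ C}=F_{C'}\circ F_C$. Monoidality under boundary connected sum is then routine: $\Heis_p$ and the Schrödinger representation are multiplicative under $\natural$, and the distinguished generators tensor to the distinguished generator.

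The main obstacle I anticipate is the bookkeeping in the middle step — verifying that passing from the infinite Heisenberg groups to the finite quotients $\Heis_p$ is compatible with all the induction and restriction operations, in particular that the subgroup of $\Heis_p(\partial C)$ generated by the images of $\widetilde{(L_C)}_p$ and $\widetilde{(L_-)}_p$, after killing the right $\Heis_p(\Sigma_-)$-action, is *exactly* $\widetilde{(L_+)}_p$ and not something larger or smaller. This requires knowing that $L_C$ is a saturated (pure) submodule of $H_1(\partial C,\Z)$, which is true because it is the kernel of a map to a free abelian group; I would isolate this as a lemma. The delicate case $p\equiv 0\pmod 4$ (where $p'=p/2$) needs a separate check that the scaling by $p'$ in the definition of $I_p$ still interacts correctly with the Lagrangian tensored with $\Z_{p'}$, but no new idea is needed there.
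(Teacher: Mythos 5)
The central gap is in your first step. You assert that ``transitivity of induction'' identifies $W_q(L_C)\otimes_{\C[\Heis_p(\Sigma_-)]}W_q(L_-)$ with a module induced from a character of the image of $\widetilde{(L_C)}_p\cdot\widetilde{(L_-)}_p$, and that ``counting dimensions on both sides gives ${p'}^{g_+}$''. But the two inductions live over different groups, $\Heis_p(\partial C)$ and $\Heis_p(\Sigma_-)$, related only through the bimodule structure, so this is not an instance of transitivity of induction; it is a coinvariants (Mackey-type) computation, and the dimension of the left-hand side is precisely what has to be proved, not a count. Three statements you rely on are left unestablished: that $\mathbf{1}\otimes\mathbf{1}\neq 0$ in the relative tensor product, that it is cyclic for $\Heis_p(\Sigma_+)$, and that the character by which $\widetilde{(L_+)}_p$ acts on it is exactly $(k,x)\mapsto q^k$. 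The last point is where all the intersection-number bookkeeping sits and where the hypothesis $L_+=L_C.L_-$ is actually consumed; that these are not formalities is visible in the paper's index-$2$ surgery computation, where many vectors $B_{k\delta}\otimes\mathbf{1}$ do vanish in the relative tensor product and survival of $\mathbf{1}\otimes\mathbf{1}$ with the correct character emerges only from explicit relations such as $B_{k\delta}\otimes \mathbf{1}=q^{\beta v -2kv}B_{(k-\beta)\delta}\otimes \mathbf{1}$, whose mutual consistency must be checked. A Stone--von Neumann argument does give cheaply that the tensor product is $W_q(L_+)^{\oplus m}$ for some $m$, but pinning $m=1$ and the normalization $\mathbf{1}\otimes\mathbf{1}\mapsto\mathbf{1}$ requires exactly the work you have skipped.

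The functoriality step repeats the same gap one level up. The pointed gluing isomorphism $W_q(L_{C'\circ C})\cong W_q(L_{C'})\otimes_{\C[\Heis_p(\Sigma_+)]}W_q(L_C)$ sending $\mathbf{1}\otimes\mathbf{1}$ to $\mathbf{1}$ is not ``essentially the same computation'': an isomorphism up to a scalar is easy from uniqueness of the Schr\"odinger representation, but strict (non-projective) functoriality of $F$ is precisely the assertion that the scalars can all be taken equal to $1$ coherently, which is the content of the theorem. (Also, the remark that transitivity of Lagrangian correspondences ``holds because there is no Maslov correction'' conflates the anomaly weights with the homological identity $L_{C'}.(L_C.L_-)=L_{C'\circ C}.L_-$, which needs its own argument; in the paper this is Lemma \ref{lem: decomp}.) The paper's proof is organized differently in order to avoid proving a gluing formula for arbitrary cobordisms: it decomposes every morphism of $3\Cob^{\rm{LC}}$ into mapping cylinders and index $1$ and $2$ surgeries via the adapted Juh\'asz/Cerf presentation of the Appendix, computes $\psi_C$ and $F_C$ explicitly in bases for each simple piece, and then verifies the relations of Corollary \ref{app:cor} -- the delicate one being the index-$1$/index-$2$ critical point cancellation -- so that the composite map is independent of the chosen decomposition. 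If you wish to keep your structural route, you must prove the normalized gluing isomorphism directly, and that reduces to the same character and cyclicity verifications missing from your first step.
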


%\marginpar{\color{blue} I would be happy to put $\tilde$ over $\to$ in the isomorphism, but everything I tried didn't work. }
%i.e. \[{f : X \overset{\tilde}{\to} Y}.\]
%v2
%\begin{theorem}\label{thm:main}
%Assume $p\geq 3$ and $p\not\equiv 2\pmod 4$.

The proof 
%of Theorem \ref{thm:main}
 uses a modification of the  Juhasz's presentation 
of cobordisms categories, which works for $3\Cob^{\rm{LC}}$
and is presented in Appendix. 
%Theorem \ref{thm:main} allows us to  construct 
%where $\psi_C$ is  an  isomorphism of $\C[\Heis(\Sigma_+)]$-modules
%constructed in Theorem  \ref{thm:main}.
 %defined as follows:
%\begin{align*} 
 %\psi_C: W_q(L_C)\otimes_{\C[\Heis_p(\Sigma_-)]} W_q(L_-)&\xrightarrow{\sim} 
 % W_q(L_+)\\ 
 %  \mathbf{1} \otimes \mathbf{1}& \mapsto \mathbf{1}.
 %  \end{align*}
%\end{theorem}
%By composing the  map $W_q(L_-)\rightarrow W_q(L_C)\otimes_{\C[\Heis_p(\Sigma_-)]} W_q(L_-)$ with the isomorphism
%$\psi_C$ 
%In the proof we check that $\check F$ is functorial with respect to the gluing. 
Throughout this paper we  refer to the functor $F$ as {\it Schr\"odinger} TQFT. 
 %We are able to normalise this map so that it is functorial, producing a functor that reproduces the abelian TQFT.

Our second result compares the abelian and Schr\"odinger TQFTs. In particular, we will show that  TQFT maps for a given cobordism $C$
coincide up to a normalization.
 The normalising coefficient, which we  denote by $Z(\check C)$,  is actually 
 the Murakami--Ohtsuki--Okada invariant of a closed 3-manifold $\check C$
 obtained from $C$ by gluing of two standard handlebodies $(H_\pm, L_\pm)$ with
 $\partial H_\pm=\Sigma_\pm$ and $L_\pm$ %consisting of 
 generated by meridians, along diffeomorphisms identifying the Lagrangians. This leads us to the following statement.
 
 %will be defined later. It is a $4$-th root of $1$ if $p$ is odd, and either zero or a $8$-th root of $1$ if $p$ is even.
\begin{theorem}\label{thm:iso-tqft}
The monoidal functor  $\check F:3\Cob^{\rm{LC}}\to \mathrm{Vect}_\C$
sending a cobordism $C$ to
\begin{align*}
\check F_C: W_q(L_-)& \to W_q(L_+)\\
 w& \mapsto Z(\check C)\, \psi_C(\mathbf{1}\otimes w)
 \end{align*}
  coincides with the abelian TQFT at $q$ restricted to 
$3\Cob^{\rm{LC}}$.
\end{theorem}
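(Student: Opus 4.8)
\emph{Strategy.} Write $\bbV$ for the abelian TQFT at $q$; since all anomalies vanish on $3\Cob^{\rm{LC}}$, it restricts there to an honest monoidal functor $3\Cob^{\rm{LC}}\to\Vect_\C$. The plan is to prove that $\check F_C=\bbV_C$ for \emph{every} cobordism $C$ in $3\Cob^{\rm{LC}}$; as $\bbV$ is a monoidal functor this will simultaneously show that the assignment $C\mapsto\check F_C$ is a monoidal functor and identify it with $\bbV$. First we match the two functors on objects: by the theta-function picture recalled above (due to Gelca and collaborators in the case $4\mid p$ and extended here to arbitrary $p\not\equiv 2\pmod 4$), $\bbV(\Sigma,L)$ is canonically the Schr\"odinger representation $W_q(L)$, with the standard handlebody $H$ (with its meridian Lagrangian) carried to the generator $\mathbf{1}\in W_q(L)$. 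Fixing these identifications, $\bbV$ and $\check F$ have the same object map $(\Sigma,L)\mapsto W_q(L)$, so it remains to compare morphisms.

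\emph{Both maps are natural for the Lagrangian correspondence.} For a cobordism $C\colon(\Sigma_-,L_-)\to(\Sigma_+,L_+)$ in $3\Cob^{\rm{LC}}$, I claim both $F_C$ and $\bbV_C$ are natural with respect to $L_C$: writing $\rho_\pm$ for the Schr\"odinger actions and $\widetilde L_C\subset\Heis_p(\partial C)$ for the maximal abelian subgroup attached to $L_C$, each of these maps intertwines $\rho_-(x)$ and $\rho_+(y)$ up to the character by which $\widetilde L_C$ acts on $\mathbf{1}\in W_q(L_C)$, for every element of $\widetilde L_C$ with $H_1$-component $(x,y)\in L_C\subset H_1(-\Sigma_-,\Z_{p'})\oplus H_1(\Sigma_+,\Z_{p'})$. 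For $F_C=\psi_C(\mathbf{1}\otimes-)$ this is immediate from Theorem~\ref{thm:main}: $\psi_C$ is the unique left $\Heis_p(\Sigma_+)$-equivariant extension of $\mathbf{1}\otimes\mathbf{1}\mapsto\mathbf{1}$, and pushing an element of $\widetilde L_C$ across the balanced tensor product $\otimes_{\C[\Heis_p(\Sigma_-)]}$ converts its $-\Sigma_-$-part acting by $\rho_-$ into its $\Sigma_+$-part acting by $\rho_+$, up to a $q$-power coming from the character of $\widetilde L_C$ on $\mathbf{1}\in W_q(L_C)$. For $\bbV_C$ it follows from functoriality applied to the identity of cobordisms obtained by isotoping a framed simple closed curve $\gamma\subset\Sigma_-$ through $C$ onto a curve $\gamma'\subset\Sigma_+$ cobounding an embedded annulus in $C$ --- these are exactly the pairs with $([\gamma],[\gamma'])$ in $L_C$ --- together with the curve-operator formula recalled above, that $\bbV$ sends a framed curve on a surface to the Heisenberg operator it represents (up to the framing-dependent scalar). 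It is crucial here that we work inside $3\Cob^{\rm{LC}}$, where all Maslov anomalies vanish, so that these cobordism relations hold on the nose and the quantized-curve relations are exact, including on the central $\Z_p$.

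\emph{Stone--von Neumann rigidity.} Next I would show the space of $L_C$-natural maps $W_q(L_-)\to W_q(L_+)$ is at most one-dimensional. Such maps are the weight vectors of a prescribed $\widetilde L_C$-weight inside $\Hom_\C(W_q(L_-),W_q(L_+))\cong W_q(L_-)^{*}\otimes W_q(L_+)$; reversing the orientation of $\Sigma_-$ (which inverts $q$ and turns $W_q(L_-)^{*}$ into a Schr\"odinger module for $\Heis_p(-\Sigma_-)$) makes this tensor product a representation of $\Heis_p(\partial C)$ of dimension ${p'}^{g(\partial C)}$ and central character $q$. Since the commutator pairing on $H_1(\partial C,\Z_{p'})$ is a nondegenerate symplectic form --- in both cases $p$ odd and $p\equiv 0\pmod 4$ --- this representation is the irreducible Schr\"odinger module $W_q(L_C)$, each of whose $\widetilde L_C$-weight spaces of central character $q$ is a line. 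Hence $\bbV_C=\lambda_C\,F_C$ for a unique $\lambda_C\in\C^{\times}$; it is nonzero since $F_C(\mathbf{1})=\psi_C(\mathbf{1}\otimes\mathbf{1})=\mathbf{1}\neq 0$.

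\emph{Identifying the scalar.} Finally I would pin down $\lambda_C$ by evaluating at $\mathbf{1}$ and using the gluing axiom. Viewing the standard handlebodies as cobordisms out of the disk, $\bbV(H_-)(1)=\mathbf{1}$ and $\bbV(\overline{H_+})(\mathbf{1})=1$ (the latter says the double of $H_+$ has abelian invariant $1$, which is also forced by $C=\id$). Then, as endomorphisms of $\C$, the gluing axiom gives $\bbV(\overline{H_+})\circ\bbV_C\circ\bbV(H_-)=\bbV(\overline{H_+}\circ C\circ H_-)=\bbV(\check C)$, and the closed invariant underlying the abelian TQFT is, as recalled above, the Murakami--Ohtsuki--Okada invariant, so this scalar is $Z(\check C)$. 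On the other hand it equals $\bbV(\overline{H_+})(\bbV_C(\mathbf{1}))=\bbV(\overline{H_+})(\lambda_C\mathbf{1})=\lambda_C$. Therefore $\lambda_C=Z(\check C)$, i.e. $\bbV_C=Z(\check C)\,F_C=\check F_C$ for all $C$, and $\check F=\bbV$; monoidality of $\check F$ is inherited from $\bbV$ (and is also transparent directly, from $W_q(L\oplus L')\cong W_q(L)\otimes W_q(L')$ and multiplicativity of $Z(\check{\,\cdot\,})$ under boundary connected sum). The step I expect to be the main obstacle is the comparison in the middle: identifying the two a priori different naturality conditions --- the balanced-tensor / bimodule structure governing $\psi_C$, and the slide-a-curve-through-$C$ operation governing $\bbV_C$ --- with the single homological datum $L_C$, and then carrying through the orientation-reversal bookkeeping ($q\mapsto q^{-1}$, and the central $\Z_p$ versus $\Z_{p'}$ discrepancy when $4\mid p$) needed for Stone--von Neumann to apply uniformly in the two cases.
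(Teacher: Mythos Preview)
Your approach is genuinely different from the paper's and, modulo two concrete issues below, is a clean alternative. The paper proves the theorem by direct case-by-case computation on simple cobordisms (mapping cylinders, index~$1$ and index~$2$ surgeries), using the skein model for the abelian TQFT and the explicit formulas for $F_C$ obtained in the proof of Theorem~\ref{thm:main}; functoriality of both sides then concludes. You instead invoke Stone--von~Neumann to pin $\bbV_C$ and $F_C$ to the same line in $\Hom_\C(W_q(L_-),W_q(L_+))\cong W_q(L_C)$, and then identify the scalar by closing up. Your route avoids the Cerf/Juh\'asz machinery and the lens-space Gauss-sum computation; the paper's route avoids the orientation-reversal and central-character bookkeeping you correctly flag as delicate.

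Two points need fixing. First, your characterization ``$([\gamma],[\gamma'])\in L_C$ iff $\gamma,\gamma'$ cobound an embedded annulus in $C$'' is false: $L_C$ is purely homological, and many pairs in $L_C$ are not realized by annuli. What makes the intertwining relation for $\bbV_C$ go through is not isotopy but the special feature of the $U(1)$ skein theory that framed curves in $C$ depend only on their homology class and framing (this is what the relations in Figure~\ref{skein} buy you, and is exactly the isomorphism $S_p(\Sigma)\cong\C[\Heis_p(\Sigma)]$ used in the paper). You should replace the annulus argument by this homological one; as you anticipated, this is the substantive step, and it is specific to the abelian theory. Second, the assertion $\lambda_C\in\C^\times$ is unjustified and in fact false: the paper notes $Z(\check C)=0$ precisely when some $\alpha\in H^1(\check C,\Z_{p'})$ has $\alpha\cup\alpha\cup\alpha\neq 0$, and then $\bbV_C=0$ while $F_C\neq 0$. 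Your Stone--von~Neumann step only gives $\lambda_C\in\C$; the sentence ``it is nonzero since $F_C(\mathbf 1)\neq 0$'' proves $F_C\neq 0$, not $\lambda_C\neq 0$. Fortunately this does not affect your gluing computation of $\lambda_C$, which correctly yields $\lambda_C=Z(\check C)$ regardless.
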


Observe that the normalization coefficient $Z(\check C)$ is equal to zero if and only if
there exists $\alpha \in H^1(\check C, \Z_{p'})$ with 
non zero triple product $\alpha\cup \alpha\cup \alpha$ \cite[Thm. 3.2]{MOO}, however 
the  Schr\"odinger action is always non vanishing.
%We conclude, that  Schr\"odinger TQFT is {\it strictly stronger} than the abelian one.
%Note that the map $\check F_C: W_q(L_-) \to W_q(L_+)$
%sending $w$ to $\psi(\mathnf{1}\otimes w$ extends to a functor
%$\check F:3Cob^{\rm LC}\to {\rm Ab}$, where $\rm Ab$ are Abelian groups,
%which is more general that the abelian TQFT. In particular, it does not vanish

%The normalisation coefficient $\nu$ is at most a $8$-th root of unity 
%defined in Section \ref{ss:skein}. 
%The cobordism $C$ 
%and its dual $C^*$ are 
%is acting by the lagrangian correspondence defined in section \ref{sec:LC}.

Finally, in Section  \ref{Sec4}
 we study the projective action of the full
mapping class group $\mathrm{Mod}(\Sigma)$ on the  Schr\"odinger representations.
We use here the Stone-von Neumann theorem to identify Schr\"odinger representations for different Lagrangians.
The symplectic action sends $f\in \mathrm{Mod}(\Sigma)$ to 
the automorphim $(k,x) \mapsto (k, f_*(x))$ of the Heisenberg group.
%which {\color{blue} is intertwined %descends to 
We use this automorphism to build a projective action 
of $\mathrm{Mod}(\Sigma)$
on the Schr\" odinger representation $W_q(L)$ known as Weil representation.
However, if we ask  the natural $\mathrm{Mod}(\Sigma)$ action 
on the surface braid group $B_n(\Sigma)$
to commute with the projection to $\Heis(\Sigma)$ we get a different automorphism
$$f_\Heis(k,x) = (k+\theta_f(x), f_*(x))$$ with $f\mapsto \theta_f\in 
\mathrm{Hom}(H_1(\Sigma), \Z)$  a crossed homomorphism.
For odd $p$,  $f_\Heis$ %descends
 can be used to construct another projective action on
the Schr\"odinger representations. This action is actually compatible with the corresponding local systems on $\mathrm{Conf}_n(\Sigma)$.
Our analysis shows that in the odd case the symplectic action on Schr\"odinger  representations %local systems$
 is linearizable, however the latter action intertwining $f_\Heis$  does not.

We plan to use these results to construct an action of
cobordisms on 
the homology of  $\mathrm{Conf}_n(\Sigma)$ twisted by
Schr\"odinger representations and provide a homological interpretation of the Kerler--Lyubashenko TQFTs. Our long term goal will be to use infinite
dimensional Schr\"odinger representations to construct TQFTs with  generic 
quantum parameter $q$,
rather than at a root of unity. An existence of such TQFTs was predicted by physicists. They are expected to play a crucial role in the categorification
of quantum 3-manifold invariants \cite{gukov-vafa}. Lagrangian Floer homology may serve as an inspiration for this purpose.

The paper is organized as follows. In Section \ref{AbTQFT} we review
representation theoretical and skein constructions of abelian  TQFTs,
we discuss
   modularization functors, refinements
   as well as the action of the mapping class group and its extensions.
    In Section \ref{sec:proofs} we prove the two main theorems.
In Section \ref{Sec4} we define the two projective mapping class group actions on the Schr\"odinger representations and study the associated 2-cocycles.
Juhasz construction for lagrangian cobordisms is recalled in Appendix. 

\section{Abelian TQFTs}
\label{AbTQFT}
%give a quick construction based on  Hopf algebras in the odd case.
\subsection{Algebraic approach} 
 Let $q\in \mathbb S^1\subset \C$ be a primitive $p$-th root of $1$ 
 %where   $\kk$ is a field with characteristic $0$ 
 and $p\geq 3$ is an 
 %odd 
 integer.
 Let $p'=p$ if $p$ is odd and 
  $p'=p/2$ if $p$ even.
  Consider the group algebra $H=\C[K]/(K^{p}-1)$ of the cyclic group. 
  %$\Z/p'\Z$. 
  This algebra
can be identified with the Cartan part of the quantum $\mathfrak{sl}_2$ 
at $q$ by
extending the group monomorphism
\begin{align*}
 U(1)&\to SL(2, \C)\\
z&\to \left(\begin{matrix}  z&0\\
 0& \bar{z} \end{matrix}\right)
\end{align*}
 For this reason,   abelian
TQFTs are also called $U(1)$ TQFTs.

 The algebra $H$ has a natural Hopf algebra structure with a grouplike
 generator, i.e. $\Delta (K)=K\otimes K$, $S(K)=K^{-1}$. 
 Moreover, $H$ is a ribbon Hopf algebra with $R$-matrix and its inverse
  given by
  %cb I changed the signs, index and coeff 2
\[ R=\frac{1}{p}\sum_{0\leq i,j\leq p-1}
q^{-ij} K^i\otimes K^j,\quad\quad
R^{-1}=\frac{1}{p}\sum_{0\leq i,j\leq p-1}
q^{ij} K^{-i}\otimes K^{-j},\]
 the ribbon elements
\[v=\frac{1}{p}\sum_{0\leq i,j\leq p-1} q^{i(j-i)}K^j, \quad\quad
v^{-1}=\frac{1}{p}\sum_{0\leq i,j\leq p-1} q^{i(i-j)}K^{-j}
\]  
  and the trivial pivotal structure.

Similarly to the $U_q(\mathfrak{sl}_2)$ case, the representation 
category $H\mathrm{-mod}$ has $p$ simple modules $V_k$ for $0\leq k\leq p-1$.
However, here $V_k$ is the $1$-dimensional representation determined by its character $K\mapsto q^k$. Also in our case, the fusion rules are very simple: $V_i \otimes V_j=V_{i+j}$ where the index
$i+j$ is taken modulo $p$. Hence,
   all objects $V_j$ are {\it invertible}, meaning
 that for each $j$ there exists $k=p-j$ such that 
 $V_j \otimes V_k=V_0$, where $V_0$  is the tensor unit of  $H\mathrm{-mod}$.
 The $R$-matrix is  acting by $q^{kl}$ on $V_k\otimes V_l$.
 
 \subsection{Skein approach}  \label{ss:skein}
For  explicit computations, it is more convenient to work with a skein theoretic construction.

Consider the   skein relations depicted in Figure \ref{skein}.
%to the positive crossing we assign $q$ times its smoothing, 
%to the unknot we assign  $1$.
\begin{figure}
\centering
\begin{subfigure}{\includegraphics[scale=0.4]{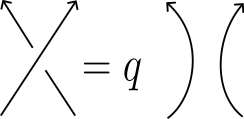}}\end{subfigure}\hspace{1.7cm}
\begin{subfigure}{\includegraphics[scale=0.4]{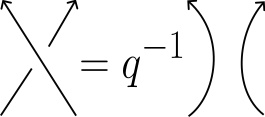}}\end{subfigure}\hspace{1.7cm}
\raisebox{2mm}{\begin{subfigure}{\includegraphics[scale=0.4]{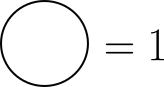}}\end{subfigure}}
\caption{$U(1)$ skein relations.}
\label{skein}
\end{figure}
Given a 3-manifold $M$, a skein module $S(M)$ 
%of $\mathbb A$ 
is a $\C$-vector space generated by framed links in $M$  modulo the skein relations. For a surface $F$ it is customary to denote by $S(F)$ the skein algebra $S(F\times [0,1])$.
 We will usually identify a coloring of a  component $K$
 of a framed link with an element
of the  skein algebra $S(A)$, where the annulus  $ A$ is embedded along $K$ by using the framing.
For example,
$V_j$-coloring is represented by an element $y^j$
where $y$ is the core of  $A$. Here we use the usual 
algebra structure on $S(A)$ to identify
$y^j$ with $j$ parallel copies of $y$.
The Kirby color is $$\Omega\equiv\sum_{k=0}^{p-1}y^k \in S(A).$$ 
%{\bf I am in favour of leaving $p$ here, since otherwise you need to
%change $p$ to $p'$ in the next formula as well and it will look ugly, but feel free to do whatever you want}

The  $\Omega$-colored $(+1)$-framed unknot gets value
\begin{equation}\label{eq:anomaly}
G\equiv\sum^{p-1}_{k=0} q^{k^2}=\left\{\begin{array}{ll}
\varepsilon\sqrt{p}\,  (1+\sqrt{-1})& \text{ if $p\equiv  0$ (mod $4$)}\\
\pm \sqrt{p}&\text{ if $p\equiv  1$ (mod $4$)},\\
0& \text{ if $p\equiv  2$ (mod $4$)}\\
\pm \sqrt{-p}
&\text{ if $p\equiv  3$ (mod $4$)}
\end{array}\right.
\end{equation}
by the well-known Gauss formula, where $\varepsilon$ is a $4$-th root of 1.
 For $p=0\pmod 4$, we explain in the next section, why
  the sums for $G$ and $\Omega$ should be taken till $p'-1$,
  and we denote them by
\begin{equation}
\label{eq:gausssum}
g\equiv\sum^{p'-1}_{k=0} q^{k^2}\quad \text{and} \quad\omega\equiv\sum_{k=0}^{p'-1}y^k .
  \end{equation}  
%  Using that in our case $q^{k^2}=q^{(k+p')^2}$.
  Recall that  for $p$ odd, $p' =p$ and $g=G$, $\omega=\Omega$. Define $\eta$ and $\kappa$ by $|g|=\eta^{-1}$ and $\kappa=\eta g$. Then $\eta^{-1}=\sqrt{p'}$ while $\kappa$ is an eighth root of unity for $p\equiv 0 \pmod 4$ and $\kappa \in \{±1, ±\sqrt{-1}\}$ for $p$ odd.
  Hence, for all $p$ except $p\equiv 2\pmod 4$, we can define
the invariant of a closed 3-manifold $M$ obtained by surgery on 
$ S^3$ along a framed $n$ component link $L$ as follows
\[ Z(M)= \kappa^{-\mathrm{sign}(L)}\eta
\langle \eta\omega,\dots, \eta\omega   \rangle_L
\]
where  $\mathrm{sign}(L)$ is the signature of the linking matrix
and $\langle x_1, \dots, x_n\rangle_L$ denotes the evaluation of $L$ whose $i$-th component is colored by $x_i$
 in the skein algebra $S(\R^3)$.

 The normalization is chosen in such a way that  $$Z(S^2\times S^1)= \eta\langle \eta\omega\rangle_{\text{0-framed unknot}}=\eta^2 \sum^{p'-1}_{k=0} \langle y^k\rangle_{\text{unknot}}= \eta^2 p' =1\quad \text{and} \quad Z(S^3)=\eta=1/\sqrt{p'}.$$

%cb changed a bit

%Dehn twists along a curve $\gamma$ act by  skein elements in the thickened surface.
The right Dehn twist along a curve $\gamma$ is represented by coloring the curve $\gamma$ with
 $$\eta\omega_-=\eta\,\sum_{k=0}^{p'-1}q^{-k^2}y^k  ,$$
where $y$ is the core of $\gamma$.

 If $p\equiv 0\pmod 4$, then we split $\omega=\omega_0+\omega_1$
into even and odd colors. 
Then we define an  additional topological structure
on $M$ that determines a $\Z/2\Z$-grading on the components of $L$, and thus a $\Z/2\Z$-grading on their colorings.
%The prescription which components of $L$ are 
 %colored by $\omega_0$ and which by $\omega_1$ is determined by an additional structure on $M$. 
  In particular, for $p\equiv 4\pmod 8$,
    we  construct an invariant of the pair $(M,s)$
 \[ Z(M,s)= \kappa^{-\mathrm{sign}(L)}\eta\langle \eta\omega_{s_1},\dots, \eta\omega_{s_n}   \rangle_L
\]
where $(s_1, \dots, s_n)\in (\Z/2\Z)^n$  satisfying 
$$\sum^n_{j=1} L_{ij}s_j=L_{ii} \pmod 2\quad$$
determines a characteristic sublink of $L$ corresponding to the spin structure $s$ on $M$. Analogously, for $p\equiv 0\pmod 8$ we  construct invariants of a pair $(M,h)$
\[ Z(M,h)= \kappa^{-\mathrm{sign}(L)}\eta\langle \eta\omega_{h_1},\dots, \eta\omega_{h_n}   \rangle_L
\]
where $(h_1, \dots, h_n)\in (\Z/2\Z)^n$  satisfying 
$$\sum^n_{j=1} L_{ij}h_j=0 \pmod 2\quad$$
determines the first cohomology class
$h\in H^1(M, \Z/2\Z)$. In both cases, 
$Z(M)$ is the sum of the refined invariants over all choices of the additional structure.

% The normalisation which gives a linear representation
% is by coloring with $\kappa\,\omega_-$.

%The computation of the {\em framing anomaly} is given by a Gauss sum. For $q=e^\frac{ 2\pi \sqrt{-1}}{p}$
%$$\sum_k q^{k^2}=\left\{\begin{array}{ll}
%\sqrt{p}&\text{ if $p\equiv  1$ (mod $4$)},\\
%  {\sqrt{-p}}
%&\text{ if $p\equiv  3$ (mod $4$)}.
%\end{array}\right.$$
%Depending of the choice of the odd integer $p$, the anomaly (or framing contribution) is $\pm1$  if $p\equiv  1$ (mod $4$), and a $4$-th root of $1$ if $p\equiv  3$ (mod $4$).

\subsection{Modularization and refinements} 
   A $\C$-linear ribbon category with a finite number of dominating simple objects
   is called {\it premodular}. If in addition, the monodromy $S$-matrix is invertible,
   then the category is modular.
In our case the $S$-matrix, whose $(i,j)$ component is the invariant $q^{2ij}$
of the $(i,j)$-colored Hopf link, 
   is invertible only for odd $p$, and in this case
%  Hence, for odd $p$, 
 $H\mathrm{-mod}$ is {\it modular},
   providing an abelian TQFT by  standard constructions \cite{Tu} or
   \cite{BHMV}.   
   
%The abelian TQFT for closed surfaces can be obtained by the universal construction .

We call a premodular category  $\mathcal C$
   {\it modularizable}, if
there  exists a 
braided monoidal essentially surjective
functor from $\mathcal C$  to a modular category, sending
the subcategory of transparent objects to the tensor unit.   
In \cite[Prop. 4.2]{Bru} Brugui\`eres gave a simple criterion for a premodular category to be  
 modularizable, see also \cite{Muger}. In particular, such category cannot contain {\it transparent} objects with twist coefficient $-1$.
 Recall that an object is called transparent,
 if it  has trivial braiding with any other object.
Observe that the row in the $S$-matrix corresponding to the transparent object is colinear with the one for the tensor unit.
 
 If $p$ is even, 
   $H\mathrm{-mod}$ is a  premodular 
   category.   
   The  object $V_{p'}$ is { transparent}  and has
    twist coefficient  $q^{p'^2}$, which is $1$ if $p'$ is even
   and $-1$ if $p'$ is odd. 
    Using results of 
\cite{Bru}, we deduce that
%This is because the object $V_{}
%\vspace{2mm}
in the case when $p\equiv 0\pmod 4$, 
%and $2$ is invertible in $\C$, 
$H\mathrm{-mod}$ is modulari\-zable.
The resulting modular category has $p'$ simple objects, 
that are all invertible.
The new Kirby color is given in \eqref{eq:gausssum}.
%(which we sill denote $\omega$) is
%$$\omega=\sum^{p'-1}_{j=0} y^i \quad\text{and}\quad 
%g=\sum^{p'-1}_{j=0} q^{j^2}$$
Hence, we have $\eta=|g|^{-1}=(\sqrt{p'})^{-1}$ in all cases when invariant is defined. 

Furthermore, if $p\equiv 4\pmod 8$, the object $V_{p'/2}$ has twist coefficient $-1$. 
From \cite{BBC} 
 we deduce that  our category in this case is actually
   {\it spin modular},
   hence providing an abelian spin TQFT for 3-cobordisms equipped with a spin structure. 
   Analogously, if $p\equiv 0\pmod 8$,  
 we can construct a refined  TQFT that gives rise to invariants of 3-cobordisms equipped with first cohomology classes
   over $\Z/2\Z$. We refer to \cite{BBC} for details
   about the construction of the refined invariants and their properties.
   %coefficients.

   In the case $p\equiv 2\pmod 4$, $H\mathrm{-mod}$ is not
 modularizable.
%since $V_{p'}$ has twist coefficient $-1$.
The best we can do in this case to obtain 3-manifold invariants 
is to consider the degree $0$ subcategory  with respect
to the $\Z/2\Z$-grading  given by the action of $K^{p'}$.
The corresponding invariants will coincide with those obtained with the quantum 
 parameter of odd order equal to $p'$.

To construct a  map associated by an abelian TQFT with a  
3-cobordism $C:\Sigma_-\to\Sigma_+$, we first need to choose parametrizations of surfaces $\Sigma_\pm$, i.e.  diffeomorphisms $\phi_\pm: \Sigma_{g_\pm} \to \Sigma_\pm$ where $\Sigma_g$ is the standard 
genus $g$ surface.
If $p\not\equiv 2\pmod 4$,
the TQFT vector space associated with  $\Sigma_g$
 has dimension ${p'}^g$. 
 A basis $\{y^{\bf i}, {\bf i}=(i_1, \dots, i_g),
 0\leq i_j\leq p'-1\}$ is given by $p'$ colorings of $g$ cores of  the 1-handles of a bounding handlebody $H_g$.
The $({\bf i},{\bf j})$-matrix element of 
the TQFT map is constructed as follows: We 
glue the standard handlebodies $H_{g_\pm}$ 
to  $C$ along the parametrizations.
Inside $H_{g_-}$ we put the link $y^{\bf j}$ and inside $H_{g_+}$ the link $y^{\bf i}$.
The result is  a closed $3$-manifold $\check M=S^3(L)$ with a collection of circles $c^+\cup c^-$ inside, then
$$Z(C)^{\mathbf i}_{\mathbf j}:=  \kappa^{-\mathrm{sign}(L)} \eta^{g_+}\langle \eta\omega,\dots, \eta\omega, y^{\bf i}, y^{\bf j}   \rangle_{L\cup c_+\cup c_-}.$$
By using the universal construction \cite{BHMV}, this map can also 
be computed by gluing just one handlebody
$(H_{g_-}, y^{\bf j})$ to $C$ and by evaluating  the result in the skein
of $C\cup H_{g_-}$. The parametrization reduces in this approach to the choice of  Lagrangian $L\subset H_1(\Sigma, \Z)$, which is equal 
to $\ker: H_1(\Sigma, \Z)\to H_1(H_g, \Z)$, and its complement
$L^\vee$.
Since all curves representing elements of $L$ 
are trivial in the skein of $H_g$, the basis curves $y^{\mathbf j}$ of the TQFT vector space are parametrized by a basis of $L^\vee$.
% the complementary Lagrangian.

%where
%$g_+$ is the genus of $\Sigma_+$, and
% $x^\pm_i$  are  basis elements of the vector space associated to
%$\Sigma_\pm$ represented by colorings of links $L_\pm$. 
%bounded by $\Sigma_\pm$.

 %It can be constructed from the linking skein model.
%There is a basis indexed by elements in a lagrangian mod $p$ and we have formulas for Dehn twists and elementary cobordisms.
%In this case, the TQFT functor on Crane--Yetter category $3\Cob$ of connected 3-cobordisms between connected
%1-punctured surfaces (see e.g \cite{BD1}) gives the same vector spaces.

 %  for pairs $(M, h\in H^1(M, \Z/2\Z)$ for every 3-manifold $M$.

  % (this color is denoted by $y^\nu$ in the skein model for this TQFT).
%For writing down the $R$-matrix, we use the idempotents $\pi_k$ uniquely defined by $X\pi_k=q^k \pi_k$.% with the cointegral $\pi_0$. 
%Then we have 
%$$R=\sum_{0\leq i,j<p} q^{ij}\,\pi_i\otimes \pi_j\ .$$
%A ribbon element is $r=\sum_k q^{-k^2}\pi_k$. 
%We easily check that the Drinfeld map is an isomorphism, hence $H$ is a modular Hopf algebra. 

%\vspace{2mm}

In this paper we will be particularly interested in the Crane--Yetter category $3\Cob$ of connected 3-cobordisms between connected
surfaces with one boundary component. In this category the monoidal product is given by the boundary connected sum rather than by the disjoint union, thus leading 
to a rich algebraic structure \cite{BobtchevaPiergallini}.
By the result of \cite{BD}, 
the category $\mathrm{3Cob}^{\sigma}$ of extended 3-cobordisms is universal in the sense that
for any
  finite unimodular  ribbon category $\mathcal C$, there exists a %\marginpar{\color{blue} we do not have $\kappa=1$}
 TQFT functor $F:3\Cob^\sigma\to \mathcal{C}$ defined  by sending the torus with one boundary component to the {\it end} of $\mathcal C$.
In our case, for odd $p$  
%all assumptions are satisfied and
$$\mathit{end}(H\mathrm{-mod})=\oplus^{p-1}_{j=0}\,  V_j  .
$$
Modularization creates an isomorphism $V_k\cong V_k\otimes V_{p'}$, hence
for $p\equiv 0\pmod 4$ we have
$$\mathit{end}(H\mathrm{-mod})=\oplus^{p'-1}_{j=0}\,  V_j  .
$$
In both cases, the vector space associated by $F$ to a genus $g$
surface with one boundary component has dimension $p'^g$.

For even $p'$,  refined TQFTs on $3\Cob^\sigma$ can be constructed along the lines 
of \cite{BD1}. On the standard cobordism category this was done in  \cite{B, BM}.

\subsection{Extended cobordisms and 
Lagrangian correspondence}\label{sec:LC}

 Let us recall that the  skein or Reshetikhin--Turaev 
 TQFT constructions give rise to  projective representations
 of the mapping class group and the gluing formula has a 
 so-called {\it framing anomaly} which can be resolved by using
 {\it extended cobordisms}. The latter are given by  a pair: a 3-cobordism
 between surfaces equipped with  Lagrangian subspaces in the first
 homology group
 %$H_1(\Sigma, \Z)$ 
 and a natural number. This approach leads to a representation
 of a certain central extension of the mapping class group.

If $p$ is odd, then the framing anomaly $\kappa$, defined as $\frac{g}{|g|}$, where the Gauss sum $g$ in \eqref{eq:anomaly}, 
  is a $4$-th root of $1$.
From 
%\cite{MasbaumRobert},
\cite[Remark 6.9]{GilmerMasbaum} we can deduce
that 
%already the standard TQFT (without extensions)
the corresponding TQFT contains a native representation of the mapping class group.
This is because, the central generator of the extension acts
by $\kappa^4=1$, hence the index $4$ subgroup described in \cite{GilmerMasbaum} is the trivial extension.
Recall that the metaplectic group $\mathrm{Mp}_{2g}$ is the non trivial double cover of the symplectic group $\mathrm{Sp}_{2g}$.
The metaplectic mapping class group is the pull back of this double cover using the symplectic action.
In the case $p\equiv  0$ (mod $4$) the framing anomaly $\kappa$ is a primitive $8$-th root of unity and the above argument shows that the TQFT contains a native representation of the metaplectic mapping class group.

% Indeed the usual central extension defined with the Maslov index contains an index $4$ subgroup on which the central generator acts by a $4$-th power  of the framing anomaly.

To avoid  anomaly issues in general,  we will work with a subcategory 
$3\Cob^{\rm{LC}}$ 
of the category of connected {\it extended} 3-cobordisms between connected
surfaces with one boundary component equipped with Lagrangians.
Objects of $3\Cob^{\rm{LC}}$ are as in $3\Cob^{\sigma}$, namely pairs: a
connected surface with one boundary component $\Sigma$ and
  a Lagrangian subspace $L\subset H_1(\Sigma,\Z)$. Recall that 
  a Lagrangian  is a maximal submodule
  %Lagrangian if it is half dimensional 
 with vanishing intersection pairing.
 For any 3-cobordism $C:\Sigma_- \rightarrow \Sigma_+$ we define a Lagrangian correspondence
 $$L_C=\mathrm{Ker}\left(i_*: H_1(\partial C, \Z)=H_1(-\Sigma_-,\Z)\oplus H_1(\Sigma_+,\Z) \to H_1 (C, \Z)\right) .$$
Now given Lagrangians $L_\pm \subset H_1(\Sigma_\pm, \Z)$ 
 the action of $L_C$ on $L_-$ is defined as follows:
 $$L_C.L_-=\{y\in H_1(\Sigma_+)\, | \, \exists \, x\in L_-,\  (x,y)\in L_C\}$$
%The dual cobordism ...
 The pair $(C, 0)$ belongs to $3\Cob^{\rm{LC}}\!\!\left((\Sigma_-,L_-),(\Sigma_+,L_+)\right)$ if and only if   $L_C.L_-=L_+$.
 If we restrict to mapping cylinders we obtain the so called {\em action groupoid} of the mapping class group action on Lagrangian subspaces.

Restriction of the TQFT functor to $3\Cob^{\rm{LC}}$ kills
all Maslov indices needed to compute framing anomalies in gluing formulas (compare \cite[Sec.2]{GilmerMasbaum}).
%{\bf please check signs}

% In this case, all Maslov indices and hence framing anomalies vanish. 

%There is an associated TQFT functor on the cobordism category  whose objects are surfaces with lagrangian and morphisms are extended cobordisms. 

%v2
\section{Proofs}\label{sec:proofs}
%In the previous section we have shown that finite dimensional Schr\"odinger representations provide local systems on surface configuration spaces. 
In this section  we  prove our two main results. We will always assume that  $p\geq 3$, $p\not \equiv  2 \pmod 4$ and $p'=p$ if $p$ is odd and 
  $p'=p/2$ if $p$ even.  

For a Lagrangian submodule $L\subset H_1(\Sigma,\Z)$ let $L^\vee$ be a complement of $L$. Then 
we set $L_p:=L\otimes \Z_{p'}$, and 
$L^\vee_p:=L^\vee\otimes \Z_{p'}$.
The finite quotient $\Heis_p(\Sigma){:=\Heis(\Sigma)/I_p}$ of the Heisenberg group, defined in Introduction,
 coincides with the semidirect product
$$ \Heis_p(\Sigma)\cong \left(\Z_p \times L_p\right) \ltimes L^\vee_p$$
where the multiplication  on the right hand side is given by the formula: $(k,a,b)(k',a',b')=(k+k'+2a.b',a+a',b+b')$. 
The isomorphism  between two models is {induced} by {the homomorphism} 
$${\Heis(\Sigma)\rightarrow\left(\Z_p \times L_p\right) \ltimes L^\vee_p:\:(k, a + b) \mapsto (k+a.b\pmod{p},\, a\pmod{p'},\,b\pmod{p'})}$$
 where $a\in L$, $b\in L^{\vee}$ (see \cite[Prop. 2.3]{Gelca_Uribe} for more details).

Using this isomorphism it is easy to check that 
{$\widetilde L_{p}=\Z_p\times L_p\subset \Heis_p(\Sigma)$}  is a maximal abelian subgroup. Let $q$ be a primitive $p$-th root of unity. Denote by $\C_q$ a  $1$-dimensional representation of {$\widetilde L_p$} , where $(k,x)$ acts by $q^k$. 
%\textcolor{red}{Note that the homomorphism $\Heis(\Sigma)\rightarrow\Heis_p(\Sigma)$ induces an action of $\Heis_p(\Sigma)$ on $\C[\Heis_p(\Sigma)]$ and, in particular, an action of $\widetilde{L}$.}  
Then  inducing from 
$\C_q$ we define
$$W_q(L)=\C[\Heis_p(\Sigma)]\otimes_{\C[\widetilde L_p]} \C_q$$
%{\bf change $\Z$ to $\C$ here?}
 a ${p'}^g$-dimensional {\em Schr\"odinger representation} of 
the finite Heisenberg group  $\Heis_p(\Sigma)$.
%Note that the Schr\"odinger representation $W_q(L)$ is induced from $\C=\C_q$ considered as a $1$-dimensional representation of
%$\widetilde L=\Z\times (L\otimes \Z)\subset \Heis(\Sigma)$, and 
Let us denote by $\mathbf 1$ the canonical generator of $W_q(L)$ as $\C[\Heis_p(\Sigma)]$-module.
Moreover, throughout this section 
to {\it simplify notation} we denote  $L\otimes \Z_{p'}$  by ${\sf L}$. %and write $H_1(\Sigma)$ for $H_1(\Sigma,\Z_{p'})$.

Any Lagrangian ${L}^\vee\subset H_1(\Sigma,\Z)$ complementary to ${ L}$ provides  a  basis for $W_q(L)$ indexed by
${\sf L}^\vee$. %\marginpar{\color{blue} dimension is $p'^g$}
% In order to differentiate between addition in $W_q(L)$ and in $L^0$, 
Given $b\in {\sf L}^\vee$
we denote by $v_b$  the corresponding basis vector.
In this basis the left action of the Heisenberg group is as follows.
\begin{itemize}
\item The central generator $u=(1,0)$ acts by $v_b\mapsto q v_b$.
\item For  $y\in {L}^\vee$, $(0,y)$ acts by translation: $v_b\mapsto v_{b+y}$, where the index is modulo $p'$.
\item For  $x\in {L}$, $(0,x)$ acts by $v_b\mapsto q^{2x.b}\,v_b$,  {where $x.b$ is computed by lifting $b$ to any preimage in $H_1(\Sigma)$ and is well-defined since $q$ is a $p$-th root of unity}.
\end{itemize}
In the last step we used the rule $(0,x)(0,b)=(x.b, x+b)=(0,b)(2x.b, x)$.

%It contains a canonical generator $v_0(L_C)$.
%We will check that $W_q(L_C)\otimes_{\Heis(\Sigma)} W_q(L)$ is isomorphic to $W_q(L')$ and that the map
%\begin{align*}
%W_q(L)&\rightarrow W_q(L'),\\
%w &\mapsto v_0(L_C)\otimes w \in W_q(L_C)\otimes_{\Heis(\Sigma)} W_q(L)\cong
%W_q(L')
%\end{align*}
% coincides with the  TQFT map for the generators of the cobordism category. We expect the following theorem.
Our  results provide a new model for the abelian TQFT 
based on Schr\"odinger representations. Let us prove our main theorems.
%\begin{theorem}&&a) If $L_+$ is obtained from $L_-$ by action of the Lagrangian correspondence given by $L_C$, then the tensor product
%$W_q(L_C)\otimes_{\C[\Heis(\Sigma_-)]} W_q(L_-)$ is  isomorphic to $W_q(L_+)$ %as a left $\Heis(\Sigma_+)$-module.\\
%b) Let $\mathbf 1$ be the generator of the $\C[\Heis(\partial C)]$-module $W_q(L_C)$.
%The map  $F_C: W_q(L_-)\to W_q(L_+)$ such that
 %$$F_C(w)=\mathbf 1\otimes w\in W_q(L_C)\otimes_{\C[\Heis(\Sigma_-)]} W_q(L_-)\cong
%W_q(L_+)$$ extends to  a functor  $F:3\Cob^{\rm{LC}}\to \mathrm{Vect}_\C$
%which  is equivalent to the abelian TQFT at $q$.
%\end{theorem}

%{Attention}: For simplicity, in both proofs below
%  we write and denote

\begin{proof}[{\bf Proof of Theorem \ref{thm:main}}]
Let 
%$$ and $(\Sigma',L')$ be two objects in the category 
$C\in 3\Cob^{\rm{LC}}((\Sigma_-,L_-), (\Sigma_+,L_+))$.
%cobordism compatible with Lagrangian correspondence.
%This means that 
%$-\Sigma_-\natural \, \Sigma_+$,%\marginpar{\color{blue}notation ?}
%the kernel of the inclusion map $H_1(\partial C,\Z) \rightarrow H_1(C,\Z)$ is a Lagrangian submodule 
%$L_C\subset H_1(\partial C,\Z)\cong H_1(-\Sigma_-,\Z)\oplus H_1(\Sigma_+,\Z)$ and $L_C.L_-=L_+$.
Then we have three Heisenberg groups $\Heis(\Sigma_-)$, $\Heis(\Sigma_+)$ and $\Heis(\partial C)$, and respective  Schr\"odinger representations  $W_q(L_-)$, $W_q(L_+)$ and $W_q(L_C)$.

Using that $\partial C=-\Sigma_-\cup_{{S}^1} \Sigma_+$ and
the inclusions $H_1(-\Sigma_-,\Z)\rightarrow H_1(\partial C,\Z)$,  $H_1(\Sigma_+,\Z)\rightarrow H_1(\partial C,\Z)$
we have commuting actions of $\Heis(-\Sigma_-)$ and $\Heis(\Sigma_+)$ on $W_q(L_C)$. 
Actually,  $W_q(L_C)$ can be viewed as a $(\C[\Heis(\Sigma_+)],\C[\Heis(\Sigma_-)])$-bimodule, after identifying
the group $\Heis(-\Sigma_-)$  with $\Heis(\Sigma_-)^{op}$, and defining a right action of $\Heis(\Sigma_-)$ on $W_q(L_C)$   as the left action of the same element of $\Heis(-\Sigma_-)$. 
Then we can form the tensor product $W_q(L_C)\otimes_{\C[\Heis(\Sigma_-)]} W_q(L_-)$ and compare it with $W_q(L_+)$.

%We have $$W_q(L_+)=\C[\Heis_p(\Sigma_+)]\otimes_{\C[\widetilde L_+]}\C_q\ ,$$ where $\C_q$ is the $1$-dimensional representation of
%$\widetilde L_+$ defined by the character $(k,x)\mapsto q^k$.
%On the other end 
%$W_q(L_C)\otimes_{\Z[\Heis(\partial C)]}W_q(L_+)$.
%=\left(\C[\Heis_p(\partial C)]\otimes_{\Z[\widetilde L_-]}\C\right) \otimes_{\Z[\Heis_p(\Sigma_-)]}\left(\C[\Heis_p(\Sigma_-)]\otimes_{\Z[\widetilde L_-]} \C\right)\ .$$
Any morphism in $3\mathrm{Cob}^{\rm{LC}}$ can be decomposed into simple ones, which are
mapping cylinders and  index $1$ or $2$ surgeries (see Section \ref{sec:app} for details).
%This is a consequence of the existence of a Morse datum \cite{GayWehrheimWoodward,juhasz2018}. 
We will first prove the isomorphism
$W_q(L_C)\otimes_{\C[\Heis(\Sigma_-)]} W_q(L_-)\cong
W_q(L_+)$
% and compute $F_C$ 
 and compute
  the 
induced maps
$ F_C: W_q(L_-)\to W_q(L_+)$ for simple cobordisms.
Then we
use Corollary \ref{app:cor} to
argue that the bimodule associated with a composition of simple cobordisms 
does not depend on the choice of the decomposition.
%is the expected tensor product of bimodules. 
%In addition, 
%, that will be used to prove Theorem \ref{thm:iso-tqft}. 
%The reader interested in Theorem \ref{thm:main} only
%may skip this part.

%Compositing these isomorphisms
%%for the simple cobordisms
%we get the claim. {\bf Please add the argument here, or move all this discussion at the end of the proof}
%Then we will identify $F_C$ with the TQFT map, and finally prove the functoriality of $F$.

For a mapping cylinder $C_f:(\Sigma_-, L_-)\rightarrow (\Sigma_+, L_+)$, where 
the diffeomorphism $f:\Sigma_-\rightarrow \Sigma_+$ 
sends $L_-$ to $f_*(L_-)=L_+$, we have 
$${L}_{C_f}=\{(-x,f_*(x)),\ x\in H_1(\Sigma_-,\Z)\}$$ 
%where for all kinds of Lagrangians ${\sf L}$ means $L \otimes \Z_{p'}$.
We choose a Lagrangian ${ L}_-^\vee$ 
%and $L_+^\vee$ 
complementary  to ${ L}_-$. Then ${ L}_+^\vee=f_*({ L}_-^\vee)$ is complementary to ${L}_+$. 
The submodule  $${ L}_{C_f}^\vee={ L}_-\oplus { L}_+^\vee\subset H_1(\partial C_f,\Z)$$ is Lagrangian and 
complementary to ${ L}_{C_f}$. Indeed,
 if $(-x,f_*(x))$ belongs to ${ L}_{C_f}^\vee$, then $x\in { L}_-$ and $f_*(x)\in { L}^\vee_+\cap {L}_+=\{0\}$,
 %We get $x\in L_-\cap L^\vee_-=\{0\}$ which 
showing that   ${ L}_{C_f}^\vee\cap { L}_{C_f}=\{0\}$. 
%For all Lagrangian $L_*^\circ$, we will use the lowercase notation $l_*^\circ$ with same indices for its tensor product with $\Z_{p'}$.
Recall that for all kinds of Lagrangians $L$, the notation ${\sf L}$ means $L \otimes \Z_{p'}$.
A $\C$-basis $b^+_y$  for 
$W_q(L_+)$ is labelled  by elements $y\in {\sf L}_+^\vee$.  {In all computations below we use the same notation for elements of $L^{\vee}$ as elements of $\Heis({\Sigma})$ acting on a module and ${\sf L}^{\vee}$ as the indexing set of a basis of $W_q(L)$. It shouldn't be confusing since in the second case they are just considered modulo $p'$.} Likewise for any element $(k,x)\in \widetilde{L}$ its image in $\widetilde{L}_p$ is just $(k \pmod{p}, x \pmod{p'})$, hence it acts again by $q^k$ on $\C_q$.

We have bases $\{B_z, z\in {\sf L}_{C_f}^\vee\}$ for $W_q(L_{C_f})$, and  $\{b^-_x, x\in {\sf L}_-^\vee\}$ for $W_q(L_-)$.
As a vector space the tensor product is generated by  
$$\{B_z\otimes b^-_x, z\in {\sf L}_{C_f}^\vee, x\in {\sf L}_-^\vee\}$$ 
with  relations
coming from the action by elements in $\Heis(\Sigma_-)$. We write $z\in {\sf L}_{C_f}^\vee$ as  $z=(z_-,z_+)$, $z_-\in {\sf L}_-$, $z_+\in {\sf L}_+^\vee=f_*({\sf L}_-^\vee)$. 
%\textcolor{blue}{\bf here and later write first - then +, as in $L_C$}

For an element $y\in {L}_-$ we get the relation
%Let us check another relation  coming from  $x\in l_-$ 
$$ q^{2y.x}B_{(z_-,z_+)}\otimes b^-_x
=B_{(z_-,z_+)}(0,y)\otimes b^-_x=
(0,(y,0))B_{(z_-,z_+)}\otimes b^-_x=
 B_{(z_-+y,z_+)}\otimes b^-_x$$
This reduces the set of generators to $\{B_{(0,z_+)}\otimes b^-_x, 
x\in {\sf L}^\vee_-,  z_+\in {\sf L}_+^\vee\}$.

For an element $x\in {L}_-^\vee$ we get another relation
\begin{align}\label{eq:mapcyl}
 B_{(0,z_+)}\otimes b^-_{x}=&B_{(0,z_+)}(0,x)\otimes \mathbf{1}=
(0, (x,0))B_{(0,z_+)}\otimes \mathbf{1}\\ \nonumber=&
(0,(0,f_*(x))(0,(x,-f_*(x)) B_{(0,z_+)}\otimes \mathbf{1}=
q^{ -2f_*(x).z_+} B_{(0,z_++f_*(x))}\otimes \mathbf{1}
\end{align}
where the intersection is written on the positively oriented $\Sigma_+$. 
%Mind the sign change because we use the action of the Heisenberg group of $-\Sigma_-$. %We used $(0,m)(0,l)=(m.l, m+l)=(0,l)(2m.l, m)$
%This reduces the set of generators to $\{B_{(z_-,z_+)}\otimes \mathbf{1}, z_-\in {\sf L}_-,  z_+\in {\sf L}_+^\vee\}$.
%Let us check another relation  coming from  $x\in l_-$ 
%$$ B_{(z_-,z_+)}\otimes \mathbf{1}
%=B_{(z_-,z_+)}(0,x)\otimes \mathbf{1}=(0,(x,0))B_{(z_-,z_+)}\otimes \mathbf{1}=
%B_{(z_-+x,z_+)}\otimes \mathbf{1}$$
%Here the intersection is written in positively oriented $\Sigma_-$. Mind the sign change because we use the action of the Heisenberg group of $-\Sigma_-$. 
This further reduces the genera\-tors to $\{B_{(0,z_+)}\otimes \mathbf{1}, z_+\in {\sf L}_+^\vee\}$.
%We deduce that  $B_{(z_-,z_+)}\otimes \mathbf{1}$ vanishes unless $z_-=0$.
Since any relation coming from any element in $\Heis(\Sigma_-)$ can be deduced from the previously written ones, 
 we get 
 that $\{B_{(0,y)}\otimes \mathbf{1}, y\in {\sf L}_+^\vee\}$ represents
 a $\C$-basis for the tensor product $W_q({L}_C)\otimes_{\C[\Heis(\Sigma_-)]}W_q(L_-)$.
 % is represented by $\{B_{(0,y)}\otimes \mathbf{1}, y\in l_+^\vee\}$. 
 It follows that the $\C[\Heis(\Sigma_+)]$-module map 
 $$\psi_C: W_q(L_C)\otimes_{\C[\Heis(\Sigma_-)]} W_q(L_-)\rightarrow W_q(L_+)$$
  which sends $\mathbf 1\otimes \mathbf 1$ to $\mathbf 1$ is an isomorphism. Moreover, the map 
\begin{align*}
 F_C:W_q(L_-)&\rightarrow  W_q(L_C)\otimes_{\C[\Heis(\Sigma_-)]} W_q(L_-)
 \cong W_q(L_+)\\
b^-_x & \mapsto  \psi_C (\mathbf{1} \otimes b^-_x)
 \end{align*}   sends a basis vector $b^-_x$ to $b^+_{f_*(x)}$, for any $x\in {\sf L}_-^\vee$, by using \eqref{eq:mapcyl} with $z_+=0$.
 %, since
% {\bf can we be more precise here: by definition the map sends $b^-_x$ to 
%$$\mathbf{1}\otimes b^-_x= B_{(-x,f_*(x))} \otimes \mathbf{1}=
%$$ by our previous computation, why the later is $b^+_{f_*(x)}$?
 
%$H_-   \equiv H_- \amalg \Sigma_+/x \sim f(x)  \equiv H_+ $
 
 In the case of a simple cobordism $C:(\Sigma_-,L_-)\rightarrow (\Sigma_+,L_+)$ corresponding to an index $1$ surgery, the genus increases by $1$. We have 
 $$L_C=\{(-x_-,x_+),\ x_-\in H_1(\Sigma_-,\Z)\} \oplus \Z(0,\mu)$$ where
 $\mu$ is a meridian of the new handle and $x_+$ is the  class  $x_-$ pushed in $\Sigma_+$.
 Let $\lambda$ be a longitude for the new handle.  We choose a Lagrangian $L_-^\vee$ complementary to $L_-$. By pushing through the cobordism, we may also consider  $L_-^\vee$ as a subspace in $H_1(\Sigma_+,\Z)$. The span of $L_-^\vee$ and $\lambda$ gives a Lagrangian $L^\vee_+$ complementary to  $L_+$. 
 Then $L_C^\vee=L_-^\vee\oplus L_+^\vee$ is complementary to $L_C$ and the previous argument constructs the isomorphism. Here the map $$ F_C:W_q(L_-)\rightarrow W_q(L_C)\otimes_{\C[\Heis(\Sigma_-)]} W_q(L_-)
 \cong W_q(L_+)$$ sends a basis vector $b^-_x$ to $b^+_{x}$, where $x\in {\sf L}_-^\vee=L_-^\vee\otimes \Z_{p'}$.

 Let us consider a  simple cobordism $C:(\Sigma_-,L_-)\rightarrow (\Sigma_+,L_+)$ corresponding to an index $2$ surgery 
on a curve $\gamma$. 
Let $\delta$ be a curve in $\Sigma_-$
such that $\gamma. \delta=1$.
 %intersecting $\gamma$ once with positive sign. 
The curves $\gamma$ and $\delta$ determine a genus one subsurface $\Sigma_1$. Outside $\Sigma_1$ the cobordism is trivial.
 Denote by $\Sigma \subset \Sigma_-$ the complement of $\Sigma_1$ which we consider also  as a subsurface of $\Sigma_+$.
 We arrange the splitting so that $\Sigma_-=\Sigma\ \natural \ \Sigma_1$ is a boundary connected sum. Then all Lagrangian subspaces and Schr\"odinger modules split. Over $\Sigma$ the cobordism is trivial and the expected result is clear, so that it is enough to compute in the genus $1$ case, $\Sigma_-=\Sigma_1$ and $\Sigma_+=D^2$. The Lagrangian $L_-$ is generated by a simple curve $m$. A complementary Lagrangian $L_-^\vee$ is generated by $l$ with $m.l=1$.
 We have $\gamma=\alpha m+\beta l$, $\gcd(\alpha,\beta)=1$. 
The Lagrangian correspondence  is $$L_C= \Z(\gamma, 0)\quad\text{with complement}
\quad
L_C^\vee=\Z(\delta,0)$$ 
where
$\delta=um+vl$,  $\alpha v-\beta u=1$.
 Then $m=v\gamma - \beta \delta$, $l=-u\gamma+\alpha \delta$.
 We have bases  $B_{k\delta}$ and $b_{\nu l}$, $0\leq k,\nu<p'$ for  $W_q(L_C)$ and $W_q(L_-)$, respectively.
 Using $l$ we get the relation
$$ B_{k\delta}\otimes b_{(\nu+1) l}=B_{k\delta}(0,-u\gamma+\alpha \delta)
\otimes b_{\nu l}= (0,\alpha \delta)(u\alpha, -u\gamma) B_{k\delta}
\otimes b_{\nu l}=
q^{u \alpha+2ku}B_{(\alpha+k)\delta}\otimes b_{\nu l}\ $$
where we used %that $\delta.\gamma=1$ 
intersection on $-\Sigma_-$.
This reduces the set of generators to $B_{k\delta}\otimes \mathbf{1}$, $0\leq k<p'$.
 The relation coming from  $m$ then gives
\begin{equation}
\label{eq:rel}
 B_{k\delta}\otimes \mathbf{1}=B_{k\delta}(0, v\gamma-\beta\delta)\otimes \mathbf{1}=
(0, -\beta\delta)(\beta v, v\gamma)B_{k\delta}\otimes \mathbf{1}=
q^{\beta v -2kv}B_{(k-\beta)\delta}\otimes \mathbf{1} 
.\end{equation}

If the surgery curve $\gamma$ is in $L_-$, we can choose $m=\gamma$, $l=\delta$.
The last relation gives in this case $B_{k\delta}\otimes \mathbf{1}=q^{-2k} B_{k\delta}\otimes \mathbf{1}$. Hence we have $B_{k\delta}\otimes \mathbf{1}=0$ for $0<k<p'$ and the tensor product is $\C$-generated by $\mathbf 1\otimes \mathbf 1$. The equalities
$$
B_{k\delta} \otimes \mathbf{1}=\begin{cases}
1& \text{if} \quad k\equiv 0 \pmod {p'}\\
0& \text{else}
\end{cases}
$$
% $$\left\lbrace
% \begin{array}{lcll}
% B_{k\delta}\otimes b_{\nu l}&\mapsto&0 &\text{ if $ $}k+\nu \neq 0 \text{ mod }p'\\
%  B_{k\delta}\otimes b_{\nu l}&\mapsto& 1&\text{ else}
% \end{array}
% \right.$$
 define an isomorphism $W_q(L_C)\otimes_{\C[\Heis(\Sigma_-)]}W_q(L_-)\cong \C_q$. In particular, 
$$ F_C(b_{k l})=\left\lbrace
 \begin{array}{ll}
1 &\text{ if}\quad k \equiv 0  \pmod {p'} \\
  0 &\text{ else}
 \end{array}
 \right.$$
%\textcolor{blue}{this map should depend on $\alpha$ and $p'$ mod 4}
If the surgery curve $\gamma$ is not in $L_-$ then $\beta\neq 0$. Let $d=\gcd(\beta,p')$, then the order of $\beta$ modulo $p'$ is $a=\frac{p'}{d}$. Hence,  relation \eqref{eq:rel} reduces the generators to
$\{B_{k\delta}\otimes \mathbf{1}, 0\leq k<d\}$.
Finally,  the action of $(0, am)_{-\Sigma_-}$ gives the following relation
$$B_{k\delta}\otimes \mathbf{1}=
(0, av\gamma-a\beta \delta) B_{k\delta}\otimes \mathbf{1}=
(0, -a\beta \delta)(a^2\beta v, av\gamma) B_{k\delta}\otimes \mathbf{1}=
q^{-2kav}B_{k\delta}\otimes \mathbf{1}$$
since $q^{a^2\beta}=1$ and the intersection pairing
is taken on $-\Sigma_-$.
 It follows $B_{k\delta}\otimes \mathbf{1}=0$ unless
$k$ is divisible by $d$, hence $\mathbf{1}\otimes \mathbf{1}$ generates
 $W_q(L_C)\otimes_{\C[\Heis_p(\Sigma_-)]}W(L_-)\cong \C_q$, as expected.

We are left with computing $ F_C$. The action of $(0, a\gamma)=
(0,a\alpha m+a\beta l)$ gives
$$\mathbf{1}\otimes b_{kl}=\mathbf{1}\otimes (0, a\beta l)(a^2\alpha\beta, a\alpha m) b_{kl}= q^{-2ka\alpha}\mathbf{1}\otimes b_{kl}$$
implying that
$ \mathbf{1}\otimes b_{kl}= 0$ if $d\nmid k$. If $d\,|\, k$ we set
$k\alpha=k'\beta$
and compute 
%the action of $(0, -kl)$
$$\mathbf{1}\otimes b_{kl}=\mathbf{1}(0, -ku\gamma+k\alpha\delta)\otimes 
\mathbf{1}=(0, k \alpha\delta)(k^2u\alpha, -ku\gamma)\mathbf{1}\otimes 
\mathbf{1}=
q^{k^2u\alpha}B_{k\alpha\delta}\otimes \mathbf{1}=q^{kk'\beta u}B_{k'\beta \delta }\otimes \mathbf{1}.
$$
From the action of $(0, k'm)$ we get
\begin{align*}
\mathbf{1}\otimes b_{kl}=&
q^{kk'\beta u} (0,k'v\gamma-k'\beta\delta) B_{k'\beta \delta }\otimes \mathbf{1}=
q^{kk'\beta u}(-(k')^2v\beta, k'v\gamma)(0, -k'\beta\delta)B_{k'\beta \delta }\otimes \mathbf{1} \\
=&
q^{kk'\beta u-kk'\alpha v}\mathbf{1}\otimes \mathbf{1}=
q^{-kk'}\mathbf{1}\otimes \mathbf{1}
=q^{-\alpha k^2/\beta}\mathbf{1}\otimes \mathbf{1}.
\end{align*}
% We can check that the map
% $$\left\lbrace
% \begin{array}{lcll}
% B_{k\delta}\otimes b_{\nu l}&\mapsto&0 &\text{ if $d\nmid k+\nu \alpha$}\\
%  B_{k\delta}\otimes b_{\nu l}&\mapsto& q^{2k\nu u}& \text{ else}
% \end{array}
 %\right.$$
% defines an isomorphism $W_q(L_C)\otimes_{\Heis(\Sigma_-)}W_q(L_-)\cong \C_q$. In particular
We deduce that 
$$ F_C(b_{k l})=\left\lbrace
 \begin{array}{ll}
0 &\text{ if $d\nmid k$}\\
  q^{-\alpha k^2/\beta} &\text{ else.}
 \end{array}
 \right.$$
 
In order to complete the proof and
construct Schr\"odinger TQFT 
 we need to 
check the conditions in Corollary \ref{app:cor} for $ F$.
 
% consider a composition of simple cobordisms. A  modification of the previous proof
%shows that for a cobordism $C': (\Sigma_-,L_-)\rightarrow (\Sigma,L)$ and  an simple cobordism $C: (\Sigma,L)\rightarrow (\Sigma_+,L_+)$, both  in 
%$3\mathrm{Cob}^{\rm{LC}}$, we have an isomorphism of 
% $(\Z[\Heis(\Sigma_+),\Z[\Heis(\Sigma_-])$-bimodules
% \[\psi_{C}^{C'}: W_q(L_{C})\otimes_{\C[\Heis_p(\Sigma)]} W_q(L_{C'})\xrightarrow{\sim}
%  W_q(L_{C'\cup_\Sigma C})
 % \]  sending $\mathbf{1} \otimes \mathbf{1}$ to $\mathbf{1}$.
%  The key point is that $W_q(L_{C'})$ can be written as a direct sum of copies of the Schr\"odinger representation
%  of $\Heis(\Sigma)$.
 % The general case of the theorem is then obtained by induction on the length of the decomposition, where the inductive step follows from the three cases we computed above.

%We have to check all of the relations from the Corollary for $\check{F}$. 
The first relation holds trivially since any $d\in\mathrm{Diff}(\Sigma)$, which is isotopic to identity,
 induces identity automorphism of $H_1(\Sigma)$ and as follows, identity automorphism of the corresponding Schr\"{o}dinger representation.

The second relation also holds since $d$ and $d^{\bS}$ can be extended to a diffeomorphism $M(\bS)\rightarrow M(\bS')$ which induces the relation.

{ If attaching and belt spheres $\bS$ and $\bS'$ of two index 1 or 2 surgeries do not intersect on a surface $\Sigma\in\mathbf{Surf}^{\rm LC}$, then it can be cut into two pieces $\Sigma=\Sigma_1\natural\Sigma_2$, such that $\bS\subset \Sigma_1$ and $\bS'\subset \Sigma_2$. It can be naturally extended to $M(\bS)$ and $M(\bS')$ so that all Lagrangians splits into direct sums, and Schr\"{o}dinger representations into tensor product (see discussion of monoidality below). It follows that $F_{M(\bS)}$ and $F_{M(\bS')}$ act independently on the corresponding factors, hence commute. It proves that relation 3 holds.}

Since definition of ${F}$ does not depend on the choice of the orientation on $\bS$,  relation $(5)$ holds.

The rest of the proof  deals with checking  that  ${F}$ preserves  relation $(4)$. For that, we compute Lagrangian correspondences   and the composition of two ${F}$ maps using some choice of bases.

Consider a composition of two cobordisms 
\begin{equation}
\label{eq:compos}
M(\bS_2)\circ M(\bS_1):(\Sigma_-, L_-)\rightarrow (\Sigma, L)\rightarrow (\Sigma_+, L_+)\end{equation}
of indices 1 and 2, such that the belt sphere $b(\bS_1)$  and the attaching $a(\bS_2)$ intersect at
%$b(\bS_1)\cap a(\bS_2)$ 
  a single point. Denote by $L_{M(\bS_1)}$ and $L_{M(\bS_2)}$ the Lagrangian correspondences induced by these cobordisms. Let $\mu$ be the meridian of the handle attached in the first surgery (i.e. $\mu=b(\bS_1)$). Let $\gamma = a(\bS_2)$ be the attaching sphere of the second surgery. The composition \eqref{eq:compos} defines a diffeomorphism $\varphi:\Sigma_-\rightarrow \Sigma_+$ uniquely up to isotopy by the property $\varphi|_{\Sigma_-\cap\Sigma_+}=\id$ \cite{juhasz2018defining}. Then it induces the isomorphism $\varphi_*:H_1(\Sigma_-)\rightarrow H_1(\Sigma_+)$. 
    
Let's choose some orientation on $\gamma$ and $\mu$, 
%and consider they as elements of $H_1(\Sigma)$, 
such that $\mu.\gamma=1$.

The first homology group of $\Sigma$ can be represented as $H_1(\Sigma)\simeq H_1(\Sigma_-)\oplus \mathbb{Z}(\mu,\gamma)$ since $\mu.\gamma=1$. Let $i_0:H_1(\Sigma_-)\rightarrow H_1(\Sigma)$ be the corresponding embedding.

Since $L_{M(\bS_1)}=\{(x,-i_0(x))\; |\; x\in H_1(\Sigma_-)\}\oplus\mathbb{Z}(0,\mu)\subset H_1(-\Sigma_-)\oplus H_1(\Sigma)$ the image of $L_-$ in $H_1(\Sigma)$ equals to
$$L=L_{M(\bS_1)}.L_-=i_0(L_-)\oplus\mathbb{Z}(\mu) .$$

The manifold $M(\bS_2)$ is homotopy equivalent to $\Sigma \cup_{\partial D^2} D^2$, where $\partial D^2=S^1\rightarrow \Sigma$ is the inclusion of $\gamma$. Hence $H_1(M(\bS_2))\simeq H_1(\Sigma)/ [\gamma]$. 

Let $h_1 \simeq S^1 \times D^1$ be the handle glued in the first surgery and $h_2 \simeq D^2\times S^0$ -- in the second. Then $\varphi$ maps $(\Sigma_-\cap \mathrm{Im}(\bS_2))\cup \mathrm{Im}(\bS_1)\simeq D^2$ to $(h_1\backslash \mathrm{Im}(\bS_2))\cup h_2 \simeq D^2$ and is identical on $\Sigma\cap\Sigma_-$. 

Consider a cycle $x$ representing a class in $H_1(\Sigma_-)$. Let 
us first consider the case when $[x].\gamma=1$ and $x\cap \mathrm{Im}(\bS_2)=I_x\simeq [0,1]$. Then $\varphi|_{x\backslash I_x}=\id$ and $\varphi$ maps $I_x$ to an interval on $(h_1\backslash \mathrm{Im}(\bS_2))\cup h_2$ connecting two points on its boundary. It can be represented (up to homotopy) as $(\mu \backslash \mathrm{Im}\bS_2)\cup y\cup y'$ where $y$ and $y'$  are two chords -- one on each of two discs of $h_2$. This means that the class $\varphi_*(x)$ can be represented as $[x]-[\mu]$ in $M(\bS_2)$. If we consider now a class $[x]\in H_1(\Sigma_-)$ with an arbitrary intersection number $[x].\gamma$ an analogous construction gives $\varphi_*(x)=[x]-([x].\gamma)\mu $ in $H_1(M(\bS_2))$. Hence the image of $(i_0(x)+\gamma.x,-\varphi_*(x))\in H_1(\Sigma)\oplus H_1(\Sigma_+)$ is equal to zero under the homomorphism $H_1(\Sigma)\oplus H_1(\Sigma_+)\rightarrow H_1(M(\bS_2))$. Therefore

$$L_{M(\bS_2)}=\{(i_0(x)+(\gamma.x)\mu, -\varphi_*(x))| x\in H_1(\Sigma_-)\}\oplus\mathbb{Z}(\gamma,0) .$$

Let $L_-^{\vee}$ be a complement to $L_-$. Then  $L^{\vee}=i_0(L_-^{\vee})\oplus \mathbb{Z}(\lambda)$ is a complement to $L$ and
 $L_{M(\bS_2)}^{\vee}=\{(i_0(x),\varphi_*(y))|x\in L_-,\:y\in L_-^{\vee}\}\oplus  \mathbb{Z}(\mu,0)$ is a complementary Lagrangian to $L_{M(\bS_2)}$. Indeed, if $(x,y)\in L_-\oplus L_-^{\vee}$ and $(i_0(x)+k\mu,\varphi_*(y))\in L_{M(\bS_2)}$ then $x=-y = 0$, $k=0$ since $L_-\cap L_-^{\vee}=\{0\}$, so $L_{M(\bS_2)}\cap L_{M(\bS_2)}^{\vee}$=\{0\}. Restriction of the intersection pairing on $L_{M(\bS_2)}^{\vee}$ is equal to zero since it is a direct sum of two Lagrangians, and $L_{M(\bS_2)}+L_{M(\bS_2)}^{\vee}=H_1(\Sigma)\oplus H_1(\Sigma_+)$ since any element $(i_0(u)+n\mu+k\lambda, \varphi_*(v)), \: u,v\in H_1(\Sigma_-)$ can be decomposed as 
\begin{align}
\label{decomposition}
    (i_0(u)+n\mu+k\gamma, \varphi_*(v))& = \left(i_0(u_{L_-^{\vee}}-v_{L_-})+(\gamma.i_0(u_{L_-^{\vee}}-v_{L_-}))\mu + k\gamma, -\varphi_*(u_{L_-^{\vee}}-v_{L_-})\right)\\\nonumber
    &+
\left(i_0(u_{L_-}+v_{L_-})+(n-(\gamma.i_0(u_{L_-^{\vee}}-v_{L_-})))\mu,\varphi_*(u_{L_-^{\vee}}+v_{L_-^{\vee}})\right),
\end{align}
where $u=u_{L_-}+u_{L_-^{\vee}}, \:  v=v_{L_-}+v_{L_-^{\vee}}$ and $u_{L_-},v_{L_-}\in L_-,\: u_{L_-^{\vee}},v_{L_-^{\vee}}\in L_-^{\vee}$. 

Choose $L_+^{\vee}=\varphi_*(L_-^{\vee})$.
Then we can choose bases of Schr\"{o}dinger representations as follows:
\begin{align*}
W_q(L_-): &\; \{b_x^{-}|x\in \mathsf{L}_-^{\vee}\};\\
W_q(L): &\; \{b_{x+n\lambda}|x\in \mathsf{L}_-^{\vee},n\in\mathbb{Z}\};\\
W_q(L_+): & \;\{b^{+}_{x}|x\in\mathsf{L}_-^{\vee}\};\\
W_q(L_{M(\bS_2)}):& \;\{B_{(x+a\mu,y)}|(x,y)\in\mathsf{L}_-\oplus\mathsf{L}_-^{\vee},a\in\mathbb{Z}\}.
\end{align*}
The map $F_{M(\bS_1)}$ sends $b^{-}_x$ to $b_x$ then.
Consider $\mathbf{1}\otimes b_x \in W_q(L_{M(\bS_2)})\otimes W_q(L)$ for $x\in \mathsf{L}_-^{\vee}$. It can be rewritten using \eqref{decomposition} as follows:
\begin{align*}
\mathbf{1}\otimes b_x & =\mathbf{1}\otimes (0,i_0(x))\mathbf{1}=(0,(i_0(x),0))\mathbf{1}\otimes \mathbf{1}\\
&=(0,(-(\gamma.i_0(x))\mu,\varphi_*(x)))(0,(i_0(x)+(\gamma.i_0(x))\mu),-\varphi_*(x))\mathbf{1}\otimes \mathbf{1}\\ &=
(0,(0,\varphi_*(x)))\mathbf{1}\otimes (0,-(\gamma.i_0(x))\mu)\mathbf{1}=(0,(0,\varphi_*(x)))\mathbf{1}\otimes\mathbf{1},
\end{align*}
which means that the image of $b_x$ is $b_{\varphi_*(x)}$ and it coincides with the action of the mapping cylinder $C_{\varphi}$ associated to $\varphi$.

%{\bf [AB]} Say something about monoidal structures

{Finally, we have to prove that $F$ preserves the monoidal structure. The monoidal {product} 
\newline $\Sigma_1\natural\Sigma_2$ of two surfaces $\Sigma_1,\Sigma_2\in\CobL$ with one boundary component is induced by boundary connected sum $\partial \Sigma_1 \#\partial\Sigma_2\simeq S^1$. Since $H_1(\Sigma_1\natural\Sigma_2)=H_1(\Sigma_1)\oplus H_1(\Sigma_2)$, the new Lagrangian is $L=L_1\oplus L_2$. It means that the Heisenberg group is the direct product over the center of the two Heisenberg groups associated to $(\Sigma_1,L_1)$ and $(\Sigma_2,L_2)$. It induces the isomorphsm $W_q(L)\simeq W_q(L_1)\otimes W_q(L_2)$. Similarly, one can check that $ F$ preserves the monoidal structure on morphisms.}
\end{proof}

It remains to compare $F$ with the abelian TQFT. 
% and to check the gluing property. 
 
\begin{proof}[{\bf Proof of Theorem \ref{thm:iso-tqft}}]

%\end{proof} 
 We will use the skein model from Section 2.
Following \cite[Theorem 4.5]{Gelca_Uribe} 
 the Heisenberg group algebra $\C[\Heis(\Sigma)]$ can be identified with the $U(1)$-skein algebra $S( \Sigma)$. 
%{\color{red}We haven't spoke about Lagrangians on the TQFT side so far,
%so I added the following}
%On the TQFT side, the choice of the parametrization of $\Sigma$
%can actually be reduced to the choice of Lagrangian $L\in H_1(\Sigma, \Z)$.
This makes the TQFT vector space $V(\Sigma,L)$  to a module over $\C[\Heis(\Sigma)]$. 
Actually,   it is isomorphic to the Schr\"odinger representation
(see \cite[Theorem 4.7]{Gelca_Uribe} for $p$ even).

%A priori, 
% the Stone-von Neumann theorem provides the isomorphism up to a complex number in $\mathbb{S}^1\subset \C$. 
Here we construct  the isomorphism explicitly.
%Explicitly, the isomorphism   
Let us denote by  $S_p( \Sigma)$ the reduced 
$U(1)$ skein algebra,
where $q$ is specified to the $p$th root of unity and a $p'$
copies  of any curve are removed
\cite[def 4.3]{Gelca_Uribe}.
 Then $S_p( \Sigma)$
is identified 
with  $\C[\Heis_p(\Sigma)]$ by sending a simple closed curve $\gamma$ with blackboard framing  to the class of  the image of
$$(0,[\gamma])\in \Z\times H_1(\Sigma,\Z)=\Heis(\Sigma)$$
in $\Heis_p(\Sigma)$. 
%where  $H_1(\Sigma)$ means $H_1(\Sigma,\Z_{p'})$.
Let $H$ be a handlebody with boundary $\Sigma$ such that ${ L}$
 is the kernel of the inclusion $H_1(\Sigma,\Z)\hookrightarrow H_1(H,\Z_{})$.
 Then the TQFT vector space $V(\Sigma,L)$ is the quotient of $S_p(\Sigma)$ by  the subspace generated by 
 $\gamma-1$ where $\gamma$ is a 
 simple curve  that bounds in $H$ or equivalently such that $[\gamma]\in  L$. Using the isomorphism
 $S( \Sigma)\cong \C[\Heis(\Sigma)]$, we deduce that the quotients $V(\Sigma,L)$  and $W_q(L)$ are isomorphic. 
 
% If we choose a Lagrangian $L^\vee$ complementary to $L$, then we get a 
A basis $\{b_x, x\in {\sf L}^\vee\}$ 
for $W_q(L)$
%, where  $L^\vee$ is the Lagrangian complementary to $L$,
can be
 represented by skein elements $\{y_x, x\in {\sf L}^\vee\}$ in $H$ providing a basis for $V(\Sigma, L)$.
 Here for  an embedded curve $x$ in $\Sigma$, the element $y_x$ 
 is obtained by pushing $x$ in $H$ with blackboard framing
 and then by taking its skein class. For example,
the element  $y_{3x}$ correspond to the three parallel copies of $y_x$
obtained by using the blackboard framing.  
 We are now able to compare $\check F_C= Z(\check C)F_C$ with the TQFT map  on simple cobordisms.
 
 Let us consider a mapping cylinder $C_f:(\Sigma_-,L_-)\rightarrow (\Sigma_+,L_+)$ with $g_-=g_+=g$. A basis for the TQFT vector space
 (identified with the Schr\"odinger representation $W_q(L_-)$)
 is represented by a handlebody $H_-$, with $\partial H_-=\Sigma_-$ and with the cores $l_i$, $1\leq i\leq g$,
 of its handles colored by $y^k$, $0\leq k\leq p'$.
 The TQFT map is represented by gluing  the mapping  cylinder  $C_f$ to
  the handlebody $H_-$. This results in a handlebody $H_+$ with boundary $\Sigma_+$. Moreover, when pushing  the colored curve $l$ across the cylinder we get a curve parallel to $f(l)$. Hence, 
the TQFT map sends $y^k$ in $H_-$  to $f(y^k)$ in $H_+$,
matching  
  $F_{C_f}$. Note that $\check C_f$ is an integral homology connected sum of $g$
  copies of $S^2\times S^1$, since $f$ preserves the Lagrangians.
  Hence, in our normalization
  $Z(\check C_f)=1$.
%  the handlebody $H_+$ has meridians $f(m_i)$ and  cores parallel to $f(l_i)$
 % for . These colored curves represent the images of the chosen basis elements 

 In the case of an index $1$ surgery $C:(\Sigma_-,L_-)\rightarrow (\Sigma_+,L_+) $, the TQFT map is represented by the inclusion of a handlebody $H_-\hookrightarrow H_+=H_-\cup_{\Sigma_-}C$, where $\partial H_-=\Sigma_-$ and $$\ker\left(H_1(\Sigma,\Z)\hookrightarrow H_1(H_-,\Z)\right)={ L}_-.$$
 This inclusion map matches again $F_{C}$ with $Z(\check C)=1$.
 
 In the case of an index $2$ surgery on a curve $\gamma$, we only need to consider the case where $\Sigma_-$ is a genus $1$ surface.
 % whose closure  $\hat{\Sigma}_-$ with a disc is the boundary of solid torus $H_-$ as before.
 Then the TQFT map $Z(C): V(\Sigma_-, L_-)\rightarrow \C_q$ 
 is given by the
 evaluation of the skein element
$(H_-,x)$ inside  $M_\gamma=(H_-\cup_{\Sigma_-} C)\cup_{ S^2}  D^3$.
If $\gamma=m$, $M_\gamma= S^1\times  S^2$
%is a meridian in  $M_-$ the computation is a Hopf pairing of the colored core of 
 %the solid torus $M_-$ with the colored curve $m(\eta\omega)$ 
 and the evaluation 
  reduces to  a Hopf link with one Kirby-colored component, which is zero unless $x=0$, when it is $1$. 
 
If $\gamma=l$, $M_\gamma= S^3$ and the evaluation is $1$ for all  $x=y^k$.
 %and the curve $\gamma$ colored by $\eta \omega$ in the skein of $\mathbb R^3$, i.e.
%$$\langle x, \eta \omega\rangle_{l\cup \gamma}.$$
%Recall that $\eta \omega$ is the normalised Kirby color from Subsection \ref{ss:skein}. 
Hence, in both cases we recover $F_C$.

More generally,  
 for $\gamma=\alpha m+\beta l$ with  $\beta\neq 0$, the manifold $M_\gamma$ is the lens space
 $L(\beta, \alpha)$. 
Let us choose a  continued fraction decomposition
$\beta/\alpha=[m_1,\dots, m_n]$ as in \cite{Li-Li}.
Then a surgery link $L$ for $M_\gamma$ is  the 
 length $n$ Hopf chain with framings $m_i$.
 Hence, the TQFT map sends $y^k$ to the following number
$$ 
Z(C)(y^k)=\kappa^{-\text{\rm sign}(L)}\; \eta^n\sum^{p'}_{j_1,\dots,j_n=1}\; q^{\sum^n_{i=1} m_i j^2_i}\;
 q^{2kj_1}\; q^{2 \sum^{n-1}_{i=1}j_i j_{i+1}}
 $$
 \begin{figure}
\centering
\begin{picture}(300,80)
\put(0,0){\includegraphics[scale=0.6]{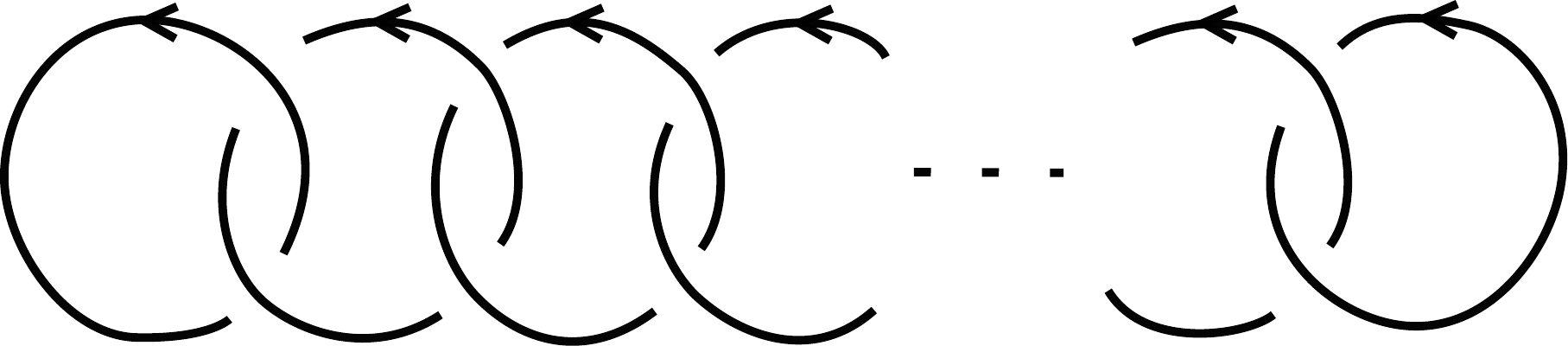}}
\put(18,8){$y^k$}
\put(70,8){$\eta\omega$}
\put(110,8){$\eta\omega$}
\put(150,8){$\eta\omega$}
\put(240,8){$\eta\omega$}
\put(280,8){$\eta\omega$}
\put(20,72){$0$}
\put(70,72){$m_1$}
\put(110,72){$m_2$}
\put(150,72){$m_3$}
\put(240,72){$m_{n-1}$}
\put(280,72){$m_n$}
\end{picture}
\caption{Surgery link for the lens space where the upper indices correspond to the framings  and the lower ones to the colors.}
\label{lens}
\end{figure}

%{\bf we can  add a picture of the length $n+1$
%Hopf chain with all but the first components  colored $\eta\omega$
%and having framings $m_i$ and  the first component colored $k$}.

%{\bf positive crossings would be better, and add zero framing for k-colored component}

Since  a  recursive computation of this sum was done in \cite{Li-Li},
we present here just the result. 
\[Z(C)(y^k)=\begin{cases} 0 & d\nmid k\\
q^{-\frac{\alpha k^2}{\beta}} Z(L(\beta, \alpha))& \text{else}
\end{cases}
 \]
where $d=\text{\rm gcd}(\beta, p')$. This  coincides
with $F_C$ on this cobordism with $Z(\check C)=Z(L(\beta, \alpha))$.

Since  any cobordism is  a composition of  simple ones and  $F_C$ is functorial,  for any cobordism $C$
we have the TQFT map 
$$Z(C): V(\Sigma_-)\cong W_q(L_-)\rightarrow  V(\Sigma_+)\cong W_q(L_+)$$
which is equal, up to a coefficient, to the inclusion map
$W_q(L_-)\rightarrow W_q(L_C)\otimes_{\C[\Heis_p(\Sigma)]} W_q(L_-)$ composed with the isomorphism from Theorem \ref{thm:main}. Closing with handlebodies compatible with the Lagrangians we get that the coefficient is $Z(\check C)$ which completes the proof.
%Since the TQFT map for any cobordism is  a composition of maps for simple ones and the same works for $F_C$, the claim follows.
%{\color{blue} Say something about MOO normalization for a gluing}
%{\color{red} I think the last sentence already summarizes what happens, but feel free to change it}
\end{proof}

%\section*{appendix}

\section{Schr\"odinger local systems on surface configurations}
\label{Sec4}
In this section we construct and study two projective representation of $\mathrm{Mod}(\Sigma)$ on Schr\"odinger representations.
Note that the results of this section are independent from the rest of the paper.

\subsection{Heisenberg group as a quotient of the surface braid group}\label{sec.4.1}

Let $\Sigma$ be an oriented surface of  genus $g$ with one boundary component. For $n\geq 2$, the unordered configuration space\index{configuration space} of $n$ points in $\Sigma$ is
\[
\mathrm{Conf}_{n}(\Sigma )= \{ \{c_{1},\dots,c_{n}\} \subset \Sigma \mid c_i\neq c_j \text{ for $i\neq j$}\}.
\]
 The surface braid group\index{surface braid group}  is then defined as ${B}_{n}(\Sigma)=\pi_{1}(\mathrm{Conf}_{n}(\Sigma),*)$. 
%A presentation for this group was first obtained by G. P. Scott \cite{Scott} and revisited by Gonz\'ales-Meneses \cite{Gonzalez}, Bellingeri \cite {Bellingeri}.
To construct a presentation, 
we fix based loops, $\alpha_1,\dots,\alpha_g,\beta_1,\dots,\beta_g$ on $\Sigma$, as depicted in Figure \ref{modelSurf}.
The base point $*_1$ on $\Sigma$ belongs to the base configuration $*$
in $\mathrm{Conf}_{n}(\Sigma)$. By abuse of notation, we use $\alpha_r$, $\beta_s$ also for the loops in $\mathrm{Conf}_{n}(\Sigma)$
 where only the point at $*_1$ is moving along the corresponding curve. We write composition of loops  from right to left.
\begin{figure}
\centering
\includegraphics[scale=0.6]{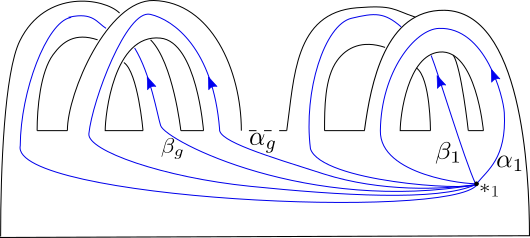}
\caption{Model for $\Sigma$.}
\label{modelSurf}
\end{figure}
%{\bf what for are the points on the picture?}{\color{blue}The points on the picture represent the base configuration.}
The braid group ${B}_{n}(\Sigma)$ has generators $\alpha_1,\ldots,\alpha_g$, $\beta_1,\ldots,\beta_g$ 
%$\gamma_1,\ldots,\gamma_{m-1}$ 
together with the classical braid generators $\sigma_1,\ldots,\sigma_{n-1}$,  and relations:
\begin{equation}
\label{eq:relations}
\begin{cases}
 [\sigma_{i},\sigma_{j}] = 1 & \text{for } \lvert i-j \rvert \geq 2, \\
\sigma_{i}\sigma_{j}\sigma_{i}=\sigma_{j}\sigma_{i}\sigma_{j} & \text{for } \lvert i-j \rvert = 1, \\
 [\zeta,\sigma_{i}] = 1 & \text{for } i>1 \text{ and all } \zeta\text{ among the }\alpha_r,\beta_s, \\
 [\zeta,\sigma_{1}\zeta\sigma_{1}] = 1 & \text{for all } \zeta\text{ among the }\alpha_r,\beta_s, \\
 [\zeta,\sigma^{-1}_{1}\eta\sigma_{1}] = 1& \text{for all } \zeta\neq \eta \text{ among the }\alpha_r,\beta_s, \text{ with}\\
& \{\zeta,\eta\}\neq \{\alpha_r,\beta_r\},\\
\sigma_{1}\beta_{r}\sigma_{1}\alpha_{r}\sigma_{1}=\alpha_{r}\sigma_{1}\beta_{r} & \text{for all } r.
\end{cases}
\end{equation}

  We denote by $x.y$  the standard intersection form on $H_1(\Sigma,\Z)$. The Heisenberg group\index{Heisenberg group}  $\mathcal{H}(\Sigma)$ is the central extension of the homology group $H_1(\Sigma,\Z)$  by the intersection $2$-cocycle $(x,y)\mapsto x.y$. As a set 
 $\mathcal{H}(\Sigma)$ is equal to
  $\Z \times H_1(\Sigma,\Z)$, with  the group structure 
\begin{equation}
\label{eq:Heisenberg-product}
(k,x)(l,y)=(k+l+\,x.y,x+y).
\end{equation}
 
 We  use the notation $a_r$, $b_s$ for the homology classes of $\alpha_r$, $\beta_s$, respectively. Let us denote by
   $[\sigma_1,{B}_n(\Sigma)]$ the normal subgroup 
of the surface braid group $B_n(\Sigma)$   
    generated by the commutators $\{[\sigma_1,x]$, $x\in {B}_n(\Sigma)\}$.
 From the presentation above we obtain the following (see \cite{HeisenbergHomology} for more details).
\begin{proposition}
\label{hom_phi}
 For each $g\geq 0$ and $n\geq  2$, the quotient 
 %of the braid group ${B}_n(\Sigma)$ by the subgroup  $[\sigma_1,{B}_n(\Sigma)]^N$ %normally generated by the commutators $[\sigma_1,x]$, $x\in \mathbb{B}_n(\Sigma)$,  An isomorphism 
 $${B}_n(\Sigma)/[\sigma_1,{B}_n(\Sigma)]\xrightarrow{\sim} \mathcal{H}(\Sigma) $$ 
is isomorphic 
to the Heisenberg group. 
 An isomorphism 
 %\marginpar{\color{blue} An isomorphism (it is not unique but fixed by the basis of $H_1$)}
 is  induced by the surjective homomorphism
\[
\phi \colon {B}_{n}(\Sigma) \relbar\joinrel\rightarrow \Heis(\Sigma)
\]
sending each $\sigma_i$ to $u=(1,0)$, $\alpha_r$ to $\tilde{a}_r=(0,a_r)$, $\beta_s$ to $\tilde{b}_s=(0,b_s)$. \\
\end{proposition}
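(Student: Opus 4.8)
The plan is to check directly that $\phi$ is a well-defined surjective homomorphism whose kernel is precisely $[\sigma_1,{B}_n(\Sigma)]$; the claimed isomorphism is then immediate (compare \cite{HeisenbergHomology}).

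\emph{Well-definedness.} I would verify that $\phi$ sends each relation of \eqref{eq:relations} to an identity in $\Heis(\Sigma)$. The only facts used are that $u=(1,0)$ is central and that the classes $a_r,b_s$ satisfy $a_r.b_r=1$ with all other pairings among them zero. The braid and far-commutativity relations map to trivial identities among powers of $u$; likewise $[\zeta,\sigma_i]=1$ and $[\zeta,\sigma_1\zeta\sigma_1]=1$ become vacuous because $u$ is central. The relation $[\zeta,\sigma_1^{-1}\eta\sigma_1]=1$ maps to $[\phi(\zeta),\phi(\eta)]=1$, which holds since two distinct generators $\zeta,\eta$ with $\{\zeta,\eta\}\neq\{\alpha_r,\beta_r\}$ carry classes of intersection $0$. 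Finally, cancelling the central $u$'s in $\sigma_1\beta_r\sigma_1\alpha_r\sigma_1=\alpha_r\sigma_1\beta_r$ leaves $u^{2}\,\tilde{b}_r\tilde{a}_r=\tilde{a}_r\tilde{b}_r$, which is exactly the Heisenberg commutation $[\tilde{a}_r,\tilde{b}_r]=u^{2\,a_r.b_r}=u^2$.

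\emph{Surjectivity and the induced map.} Surjectivity is clear, since $a_1,\dots,b_g$ generate $H_1(\Sigma,\Z)$ and $u$ generates the central kernel of $\Heis(\Sigma)\twoheadrightarrow H_1(\Sigma,\Z)$. As $\phi(\sigma_1)=u$ is central, $\phi([\sigma_1,x])=1$ for all $x$, so $[\sigma_1,{B}_n(\Sigma)]\subseteq\ker\phi$ and $\phi$ descends to a surjection $\bar\phi\colon Q\twoheadrightarrow\Heis(\Sigma)$ with $Q:={B}_n(\Sigma)/[\sigma_1,{B}_n(\Sigma)]$. To show $\bar\phi$ is injective I would compute a presentation of $Q$: it is \eqref{eq:relations} together with the relations making $\sigma_1$ central (imposing this on the generators is enough, since commuting with every generator forces commuting with all of ${B}_n(\Sigma)$, so the normal closure of those commutators is $[\sigma_1,{B}_n(\Sigma)]$). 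Once $\sigma_1$ is central, the braid relation $\sigma_1\sigma_2\sigma_1=\sigma_2\sigma_1\sigma_2$ forces $\sigma_1=\sigma_2$, hence inductively all $\sigma_i$ equal a single central element $u$; the relations $[\zeta,\sigma_i]=1$ and $[\zeta,\sigma_1\zeta\sigma_1]=1$ drop out, $[\zeta,\sigma_1^{-1}\eta\sigma_1]=1$ becomes $[\zeta,\eta]=1$, and the last relation becomes $\alpha_r\beta_r=u^2\beta_r\alpha_r$. Thus $Q$ is presented by $u,\alpha_1,\dots,\alpha_g,\beta_1,\dots,\beta_g$ with $u$ central and all $\alpha_r,\beta_s$ pairwise commuting except $[\alpha_r,\beta_r]=u^2$, which is precisely a presentation of $\Heis(\Sigma)$, the central extension of $H_1(\Sigma,\Z)$ by the intersection cocycle; therefore $u\mapsto\sigma_1$, $\tilde{a}_r\mapsto\alpha_r$, $\tilde{b}_s\mapsto\beta_s$ defines a well-defined two-sided inverse of $\bar\phi$.

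\emph{Main obstacle.} The delicate point is establishing injectivity of $\bar\phi$: one must be sure the simplified relation list for $Q$ is complete and that it literally coincides with a presentation of $\Heis(\Sigma)$. The clean way to close this is the explicit inverse above, whose well-definedness rests on the standard presentation of the Heisenberg group of a symplectic lattice. Everything else is routine bookkeeping with the relations in \eqref{eq:relations}.
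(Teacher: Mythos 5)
Your proof is correct and follows essentially the same route the paper (via \cite{HeisenbergHomology}) intends: check the relations \eqref{eq:relations} map to identities in $\Heis(\Sigma)$, observe that killing $[\sigma_1,B_n(\Sigma)]$ amounts to making $\sigma_1$ central, and recognize the resulting presentation as that of the Heisenberg group. The one ingredient you lean on — that $\langle u,\alpha_r,\beta_s \mid u \text{ central},\ [\alpha_r,\beta_r]=u^2,\ \text{other pairs commute}\rangle$ really is $\Heis(\Sigma)$ — is standard and is settled by the normal form $u^k\alpha_1^{m_1}\beta_1^{n_1}\cdots\alpha_g^{m_g}\beta_g^{n_g}$ together with the surjection $\bar\phi$, which separates these normal forms.
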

It follows that any representation of the Heisenberg group $\Heis(\Sigma)$ is also a representation of the surface braid group ${B}_n(\Sigma)=\pi_{1}(\mathrm{Conf}_{n}(\Sigma),*)$ and hence provides a local system on the configuration space $\mathrm{Conf}_{n}(\Sigma)$.
% (compare
%The corresponding homologies were introduced in 
%\cite{HeisenbergHomology}). 

%We have a quotient map $\phi:  B_{n}(\Sigma)\rightarrow \Heis(\Sigma)$.
Let us denote by $\mathrm{Aut}^+(\mathcal{\Heis}(\Sigma))$ the group of automorphisms of $\Heis(\Sigma)$ acting by identity on the center $\langle u\rangle=\Z\times 0$, namely an element of this group sends $(n,x)$ to $(c(x)+n,l(x))$ for some $l\in \mathrm{Sp}(H_1(\Sigma))$ and $c\in H^1(\Sigma)$. 
By \cite[Lemma 15]{HeisenbergHomology}
we have the following  split short exact sequence
 \begin{equation*}
   1\rightarrow H^1(\Sigma,\Z)\xrightarrow{j} \mathrm{Aut}^+(\Heis(\Sigma)) \xrightarrow{l} \mathrm{Sp}(H_1(\Sigma))\to 1
   \end{equation*}
    where $j(c)=[(k,x)\to (k+c(x),x)]$ and $\mathrm{Sp}(H_1(\Sigma))$ is the symplectic group 
    %of complex $2g\times 2g$-matrices 
    preserving the intersection pairing.
 The homomorphism $l$ has a section  
\begin{equation}
\label{eq:section}
s: g\mapsto [(k,x)\mapsto (k,g(x))]
\end{equation}  
  providing a semi-direct decomposition $\mathrm{Aut}^+(\Heis(\Sigma)) \cong \mathrm{Sp}(H_1(\Sigma)) \ltimes H^1(\Sigma;\Z)$.
 % \cite[Section 3]{HeisenbergHomology}. 

Let us denote by $\mathrm{Mod}(\Sigma)$ the mapping class group.
%Using the section $s: \mathrm{Sp}(H_1(\Sigma,\Z))\rightarrow \mathrm{Aut}^+(\Heis(\Sigma)$, 
Its action on $H_1(\Sigma,\Z)$ preserves the symplectic form, and hence
using the section $s$ from \eqref{eq:section} 
we get a {\em symplectic action} of the mapping class group on the Heisenberg group, where $f\in \mathrm{Mod}(\Sigma)$ acts by
\begin{equation}\label{eq:symaction}
(k,x)\mapsto (k,f_*(x)).
\end{equation} 
On the other hand,  the quotient map $\phi: B_n(\Sigma)\to \Heis(\Sigma)$ induces a different action of  $\mathrm{Mod}(\Sigma)$ on $\Heis(\Sigma)$.
 The following proposition is  proved in \cite[Section 3]{HeisenbergHomology}.
 \begin{proposition}
\label{f_Heisenberg}
For  $f\in \mathrm{Mod}(\Sigma)$, there exists a unique homomorphism $f_{\Heis} \colon \Heis(\Sigma) \rightarrow \Heis(\Sigma)$ such that the following square commutes:
\begin{equation}
\label{eq:projection-equivariance}
  \begin{tikzcd}
     B_{n}(\Sigma) \arrow[d,swap, "\phi"] \arrow[rr, "f_{B_{n}(\Sigma)}"] && B_{n}(\Sigma) \arrow[d,"\phi"]\\
     \Heis(\Sigma) \arrow[rr, "f_{\Heis}"] && \Heis(\Sigma)
  \end{tikzcd}
\end{equation}
\end{proposition}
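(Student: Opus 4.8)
The plan is to obtain $f_{\Heis}$ by descending the braid-group automorphism $f_{B_n(\Sigma)}$ along the surjection $\phi$; this is essentially a recollection of the argument of \cite[Section~3]{HeisenbergHomology}, and the only structural input is that $\ker\phi$ is preserved by $f_{B_n(\Sigma)}$.

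First I would recall from Proposition~\ref{hom_phi} that $\phi\colon B_n(\Sigma)\to\Heis(\Sigma)$ is surjective with kernel $\ker\phi=[\sigma_1,B_n(\Sigma)]$, and that by \cite[Prop.~8]{HeisenbergHomology} this kernel is a \emph{characteristic} subgroup of $B_n(\Sigma)$. Hence the automorphism $f_{B_n(\Sigma)}$ maps $\ker\phi$ onto itself, so $\phi\circ f_{B_n(\Sigma)}\colon B_n(\Sigma)\to\Heis(\Sigma)$ kills $\ker\phi=\ker(\phi)$. By the universal property of the quotient $B_n(\Sigma)\to B_n(\Sigma)/\ker\phi\cong\Heis(\Sigma)$ it therefore factors uniquely as $\phi\circ f_{B_n(\Sigma)}=f_{\Heis}\circ\phi$ for a homomorphism $f_{\Heis}\colon\Heis(\Sigma)\to\Heis(\Sigma)$, which is precisely the commutativity of \eqref{eq:projection-equivariance}. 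Uniqueness of $f_{\Heis}$ among all homomorphisms making the square commute is immediate: any two such agree on $\im\phi=\Heis(\Sigma)$.

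If one prefers not to invoke characteristicity, the invariance of $\ker\phi$ can be checked by hand: $\phi(\sigma_i)=u=(1,0)$ is central, so $\phi$ sends every conjugate of $\sigma_i^{\pm1}$ to $u^{\pm1}$, an element with trivial $H_1(\Sigma,\Z)$-component, and in $\Heis(\Sigma)$ one has $[(k,x),(l,y)]=(2\,x.y,0)$, so $[\Heis(\Sigma),\Heis(\Sigma)]=2\Z\times 0$ is central. Since $f_{B_n(\Sigma)}(\sigma_1)$ lies in the conjugacy class of a braid generator, the $H_1$-component of $\phi(f_{B_n(\Sigma)}(\sigma_1))$ vanishes, whence $\phi\big([f_{B_n(\Sigma)}(\sigma_1),f_{B_n(\Sigma)}(x)]\big)=0$ for all $x$; thus $f_{B_n(\Sigma)}$ carries the normal generators of $\ker\phi=[\sigma_1,B_n(\Sigma)]$ back into $\ker\phi$ (and onto it, applying the same to $f^{-1}$). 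I do not expect a genuine obstacle here: the entire content of the proposition is this invariance together with surjectivity of $\phi$. Finally, the same reasoning applied to $f^{-1}$ shows $f_{\Heis}$ is an automorphism with inverse $(f^{-1})_{\Heis}$, and $f_{\Heis}(u)=\phi(f_{B_n(\Sigma)}(\sigma_1))=u$ since $f$ is orientation preserving, so $f_{\Heis}\in\mathrm{Aut}^+(\Heis(\Sigma))$; reading off its induced action on $\Heis(\Sigma)/\langle u\rangle\cong H_1(\Sigma,\Z)$, which is $f_*$, recovers the formula $f_{\Heis}(k,x)=(k+\theta_f(x),f_*(x))$ from the Introduction with $\theta_f\in\Hom(H_1(\Sigma,\Z),\Z)$.
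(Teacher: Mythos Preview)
Your argument is correct and is precisely the intended one: the paper does not supply its own proof but defers to \cite[Section~3]{HeisenbergHomology}, and the mechanism there is exactly what you wrote---surjectivity of $\phi$ gives uniqueness, and characteristicity of $\ker\phi=[\sigma_1,B_n(\Sigma)]$ (which the paper already quotes as \cite[Prop.~8]{HeisenbergHomology}) gives existence by descent. Your additional remarks on $f_{\Heis}\in\mathrm{Aut}^+(\Heis(\Sigma))$ and the shape $f_{\Heis}(k,x)=(k+\theta_f(x),f_*(x))$ match the paper's subsequent discussion verbatim.
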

We obtain an action of $\mathrm{Mod}(\Sigma)$ on the Heisenberg group $\Heis(\Sigma)$ given by
\begin{align}
\label{eq:action_on_Heis}
 \mathrm{Mod}(\Sigma) &\longrightarrow  \mathrm{Aut}^+(\Heis(\Sigma))\\
 f & \mapsto  f_\Heis: (k,x)\mapsto (k+\theta_f(x),f_*(x)) \nonumber
\end{align}
%and the action of $f$ on $\Heis(\Sigma)=\Z\times H_1(\Sigma,\Z)$ takes the form 
where the map $\theta: \mathrm{Mod}(\Sigma) \to H^1(\Sigma,\Z)$ sending $f$ to $\theta_f \in \mathrm{Hom}(H_1(\Sigma),\Z)$ { satisfies the  {\em crossed homomorphism} property $\theta_{g\circ f}(x)=\theta_f(x)+\theta_g(f_*(x))$.
%\reversemarginpar  \marginpar{\color{blue}The crossed homomorphism is \\$\delta: \mathrm{Mod}(\Sigma)\rightarrow H^1(\Sigma,\Z)$.}
Clearly, both actions coincide on $\mathrm{Sp}(H_1(\Sigma))$, i.e.
  $l(f_\Heis)=f_*$. However, $f_*$ and $f_\Heis$ are different on $\Heis(\Sigma)$. 
  
  It is shown in \cite[Proposition 19]{HeisenbergHomology} that the crossed homomorphism $\theta$ is given by a formula due to Morita \cite[Section 6]{Morita1989}, which is stated  as follows.
For $\gamma\in \pi_1(\Sigma)$, let us denote by $\gamma_i$ the element in the free group generated by $\alpha_i$, $\beta_i$ that is the image of $\gamma$ under the homomorphism that maps the other generators to $1$. Then 
we have a  decomposition
\[
\gamma_i=\alpha_i^{\nu_1}\beta_i^{\mu_1}\dots \alpha_i^{\nu_m}\beta_i^{\mu_m}\ ,
\]
where $\nu_j$ and $\mu_j$ are $0$, $-1$ or $1$. The integer $d_i(\gamma)$ is then defined by
\begin{align*}
d_i(\gamma) &= \sum_{j=1}^m\nu_j\sum_{k=j}^m\mu_k-\sum_{j=1}^m\mu_j\sum_{k=j+1}^m\nu_k \\
&= \sum_{j=1}^m \sum_{k=1}^m \iota_{jk} \nu_j \mu_k,
\end{align*}
where $\iota_{jk} = +1$ when $j\leq k$ and $\iota_{jk} = -1$ when $j > k$.
The formula for the crossed homomorphism  is
\begin{equation*}
\mathfrak{\theta}(f)([\gamma])=\sum_{i=1}^g d_i(f_\sharp(\gamma))-d_i(\gamma)\ .
\end{equation*}
 }

\subsection{A variant of Weil representation\label{sec3.2}}
Recall that for a lagrangian subspace $L$ and $q$ a $p$-th root of unity, we defined the Schr\" odinger representation $W_q(L)$ of the finite quotient $\Heis_p(\Sigma)$ of the Heisenberg group.
The following finite dimensional version of the famous Stone--von~Neumann theorem holds. 
\begin{theorem}[Stone--von~Neumann]
For $q$ a root of unity of order $p$,    $W_q(L)$ is  the unique irreducible unitary representation of $\Heis_p(\Sigma)$, up to unitary isomorphism, where the central generator $u=(1,0)$ acts by $q$.
\end{theorem}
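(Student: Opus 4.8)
The plan is to prove the finite-dimensional Stone--von~Neumann theorem for $\Heis_p(\Sigma)$ by a direct dimension count combined with Schur's lemma, exploiting the explicit semidirect-product model $\Heis_p(\Sigma)\cong(\Z_p\times L_p)\ltimes L^\vee_p$ recorded earlier in Section~\ref{sec:proofs}. First I would fix an irreducible unitary representation $\rho$ of $\Heis_p(\Sigma)$ on a space $V$ in which $u=(1,0)$ acts by the scalar $q$; such a scalar action is forced by Schur's lemma since $u$ is central. Restricting $\rho$ to the maximal abelian subgroup $\widetilde L_p=\Z_p\times L_p$, decompose $V$ into joint eigenspaces (characters) of $\widetilde L_p$; because $u$ acts by $q$, every character $\chi$ occurring must satisfy $\chi(k,0)=q^k$, so the characters are parametrized by $\widehat{L_p}\cong L_p$ (using that $q$ is a primitive $p$-th root of unity and $p'x=0$ in $L_p$).

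The key mechanism is that the translation subgroup $L^\vee_p$ acts on $\widehat{\widetilde L_p}$ by conjugation, and the commutation relation $(0,x)(0,b)=(2x.b,0)(0,b)(0,x)$ shows that conjugation by an element of $L^\vee_p$ shifts a $\widetilde L_p$-character by the character $x\mapsto q^{2x.b}$ of $L_p$. Since the intersection pairing restricts to a perfect pairing between $L_p$ and $L^\vee_p$ (each being a free $\Z_{p'}$-module of rank $g$, and $2$ being invertible when $p$ is odd, while for $p\equiv0\pmod 4$ one works modulo $p'$ where the relevant pairing is again perfect --- this is exactly the content of the isomorphism in \cite[Prop.~2.3]{Gelca_Uribe} quoted above), the group $L^\vee_p$ acts transitively on the set of characters of $\widetilde L_p$ extending $q$ on the center. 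Therefore all $\widetilde L_p$-isotypic components of $V$ have the same dimension $m$, there are $(p')^g=|L^\vee_p|$ of them, and the stabilizer argument together with irreducibility forces $m=1$: indeed the isotypic component for the base character $\C_q$ is a module over its own stabilizer, which is just $\widetilde L_p$ itself, so it is a sum of copies of $\C_q$, and any single copy generates a submodule of dimension $(p')^g$ under the $L^\vee_p$-translates, which by irreducibility must be all of $V$.

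This yields $\dim V=(p')^g=\dim W_q(L)$ and, more importantly, exhibits $V$ as generated by a single $\widetilde L_p$-eigenvector of character $\C_q$; by the universal property of induction, the canonical surjection $W_q(L)=\C[\Heis_p(\Sigma)]\otimes_{\C[\widetilde L_p]}\C_q\twoheadrightarrow V$ sending $\mathbf 1$ to such a generator is then an isomorphism of $\Heis_p(\Sigma)$-modules by the dimension equality. Unitarity and the uniqueness up to \emph{unitary} isomorphism follow because any two unit-norm cyclic vectors of character $\C_q$ differ by a phase and the induced Hermitian form (making the basis $\{v_b\}_{b\in L^\vee_p}$ orthonormal) is the unique $\Heis_p(\Sigma)$-invariant inner product up to scale, again by Schur. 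I expect the main obstacle to be the bookkeeping in the transitivity step: one must check carefully that when $p\equiv 0\pmod4$ the factor of $2$ in $q^{2x.b}$ together with the reduction modulo $p'$ still gives a \emph{perfect} pairing $L_p\times L^\vee_p\to\mu_{p'}$, so that $L^\vee_p$ really acts simply transitively on the relevant character set --- this is where the hypothesis $p\not\equiv2\pmod4$ is essential, and it is precisely the situation already analyzed in \cite{Gelca_Uribe}, which I would cite rather than redo.
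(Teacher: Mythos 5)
Your proof is correct, but be aware that the paper itself does not prove this theorem: it only cites \cite[Theorem 2.4]{Gelca_Uribe} for even $p$ and remarks that the odd case works similarly. What you have written is essentially a self-contained version of the standard argument behind that citation: decompose an irreducible unitary $V$ with central character $q$ under the maximal abelian subgroup $\widetilde L_p$, note that conjugation by $(0,0,b)\in L^\vee_p$ shifts a character of $\widetilde L_p$ by $a\mapsto q^{\pm 2a.b}$, use perfectness of the resulting pairing (valid because $q^2$ has order $p'$) to get transitivity on the characters lying over $q$, and observe that the cyclic span of a single eigenvector is the $(p')^g$-dimensional span of its $L^\vee_p$-translates, which forces multiplicity one, $\dim V=(p')^g$, and then $V\cong W_q(L)$ by Frobenius reciprocity and a dimension count, with unitary uniqueness from Schur. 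This is a perfectly good route, and arguably more useful to the reader than the paper's pointer to the literature. Two small points. First, the statement also asserts that $W_q(L)$ itself is irreducible and unitary; this is not literally contained in your uniqueness argument, but your orbit computation applies verbatim to $W_q(L)$ (each admissible character occurs with multiplicity one on the basis $\{v_b\}$, and the $L^\vee_p$-translates of any eigenvector span everything), and unitarity is immediate since the generators act on the orthonormal basis $\{v_b\}$ by permutations times roots of unity; you should say this explicitly. Second, you locate the role of the hypothesis $p\not\equiv 2\pmod 4$ in the wrong place: if $p\equiv 2\pmod 4$ then $p'=p/2$ is odd, $q^2$ still has order $p'$, and the pairing $L_p\times L^\vee_p\to\mu_{p'}$ would still be perfect; the genuine obstruction in that case is upstream, namely that $I_p$ fails to be a subgroup of $\Heis(\Sigma)$ (since ${p'}^2\not\equiv 0\pmod p$), so the finite quotient $\Heis_p(\Sigma)$ and the semidirect-product model you use are not even defined. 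Since the standing assumption $p\not\equiv 2\pmod 4$ is in force, this does not affect the validity of your proof.
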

A proof for even $p$ can be found in \cite[Theorem 2.4]{Gelca_Uribe}. The odd case works similarly.

For the rest of this section
we fix an  {\bf odd} integer $p\geq 3$.
Denote by $\rho: \Heis_p\rightarrow GL(W_q(L))$ the Schr\"oginger representation. For any automorphism $\tau\in \mathrm{Aut}^+(\Heis_p)$, we have another representation $\rho\circ\tau: \Heis_p\rightarrow GL(W_q(L))$ called  {\em twisted} representation of $\Heis_p$  and denoted by ${}_\tau\!W_q(L)$. From the Stone--von~Neumann Theorem we get an isomorphism of representation $W_q(L)\approx {}_\tau\!W_q(L)$ which is well defined up to a scalar in $S^1$.

For odd $p$, the action  of $\mathrm{Mod}(\Sigma)$ on the Heisenberg group from Proposition \ref{f_Heisenberg}
passes to the finite quotient, and we denote by $f_{\Heis_p}$
the resulting automorphism for every $f\in \mathrm{Mod}(\Sigma)$.

Hence, for a mapping class $f$, we obtain
%\in \mathrm{Mod}(\Sigma)$, 
a unitary isomorphism
$\mathcal{S}_\Heis(f): W_q(L)\xrightarrow{\sim} {}_{f_{\Heis_p}}\!W_q(L)$ 
%which is well-defined up to a  scalar 
%in $\mathbb S^1\subset \mathbb C$ by
% the Stone--von~Neumann theorem. 
defined by the following  commutative diagram 
\[
\begin{tikzcd}
W_q(L) \ar[rr,"{\mathcal S_\Heis(f)}"] \ar[d,"{\rho(k,x)}",swap]  &&{}_{f_{\Heis_p}}\!W_q(L)\ar[d,"{\rho(f_{\Heis_p}(k,x))}"]\\
W_q(L) \ar[rr,"{\mathcal S_\Heis(f)}"] &&{}_{f_{\Heis_p}}\!W_q(L) .
\end{tikzcd}
  \]
%\begin{equation}\label{eq:intertwin}
%\rho(k,x)\circ \mathcal{S}_\Heis(f)= \mathcal{S}_\Heis(f)\circ \rho(f_\Heis(k,x)) ,\ (k,x)\in \Heis_p(\Sigma),
%\end{equation}
%where $\rho: \Heis_p(\Sigma)\rightarrow U(W_q(L))$ is the Schr\"odinger representation.
%\marginpar{\color{blue} it is a linear map which intertwins sch\"odinger  and the twisted}
%$e^{\sqrt{-1}\varphi}$, $\varphi\in \mathbb R$.
Applying the Stone-von Neumann Theorem,
this provides a homomorphism 
 $$ \mathcal{S}_\Heis: \mathrm{Mod}(\Sigma)\rightarrow \mathrm{PU}(W_q), \quad
 \text{where} \quad \mathrm{PU}(W_q)=\mathrm{U}(W_q(L))/\mathbb S^1$$ is the projective unitary group which does not depend on the choice of $L$.

Denote by ${}_{f_*}\!W_q(L)$ the Schr\"odinger representation twisted by the symplectic action, we also have an isomorphism
$\mathcal{S}(f): W_q(L)\xrightarrow{\sim} {}_{f_*}\!W_q(L)$
defined, up to a  scalar 
in $\mathbb S^1\subset \mathbb C$, by the condition
%up to an absolute value $1$ complex number by 
\begin{equation}\label{eq:intertwinS}
\rho(k,f_*(x))\circ \mathcal{S}(f)= \mathcal{S}(f)\circ \rho(k,x) ,\ \text{for any}\ \ (k,x)\in \Heis_p(\Sigma).
\end{equation}
This provides another homomorphism
$$ \mathcal{S}: \mathrm{Mod}(\Sigma)\rightarrow \mathrm{PU}(W_q)$$
The next result 
%is an immediate consequence of our proof of Theorem 1
%and Theorem 2 and it
was   proven independently  by Gelca with collaborators \cite[Theorem 8.1]{Gelca_Uribe}, \cite{Gelca_Hamilton}, \cite[Chapter 7]{Gelca}.% \cite{Gelca_Hamilton2}, \cite{Gelca_Uribe}.
\begin{corollary}\label{thm:Gelca}
The homomorphism $\mathcal{S}:\mathrm{Mod}(\Sigma)\rightarrow \mathrm{PU}(W_q)$
given by the symplectic action  
%on the Schr\"odinger representation $W_q(L)$ 
is isomorphic to the one resulting from the abelian TQFT on
$\mathrm{3Cob}$ described in Section 2. 
\end{corollary}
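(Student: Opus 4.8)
The plan is to show that both projective representations of $\mathrm{Mod}(\Sigma)$ on $\mathrm{PU}(W_q)$ — the one built from the symplectic action via Stone--von~Neumann, and the one coming from the abelian TQFT on $\mathrm{3Cob}$ — are induced by the \emph{same} actual (non-projective) action of $\mathrm{3Cob}^{\mathrm{LC}}$, or rather its action groupoid, on the Schr\"odinger representations. By Theorem~\ref{thm:iso-tqft} the abelian TQFT restricted to $3\Cob^{\mathrm{LC}}$ coincides, up to the scalar $Z(\check C)$, with the Schr\"odinger functor $F$ of Theorem~\ref{thm:main}. So the first step is to restrict attention to mapping cylinders: for $f\in\mathrm{Mod}(\Sigma)$ fixing a Lagrangian $L$ (so $f_*(L)=L$), the cylinder $C_f$ lies in $3\Cob^{\mathrm{LC}}$, and by the mapping-cylinder computation in the proof of Theorem~\ref{thm:main} the map $F_{C_f}\colon W_q(L)\to W_q(L)$ sends $b^-_x\mapsto b^+_{f_*(x)}$; moreover $\check C_{f}$ is an integral homology connected sum of copies of $S^2\times S^1$, so $Z(\check C_f)=1$ and $\check F_{C_f}=F_{C_f}$. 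Hence on the stabilizer $\mathrm{Mod}(\Sigma,L)$ the abelian TQFT gives an \emph{honest} linear action by the permutation-type operators $b_x\mapsto b_{f_*(x)}$.

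The second step is to identify this same operator with the Stone--von~Neumann intertwiner $\mathcal S(f)$ up to a scalar. This is essentially a uniqueness statement: the operator $T_f\colon b_x\mapsto b_{f_*(x)}$ manifestly satisfies the intertwining relation \eqref{eq:intertwinS}, because the left $\Heis_p(\Sigma)$-action in the given basis is by the explicit formulas recalled at the start of Section~\ref{sec:proofs} (translation by $L^\vee$-elements, multiplication by $q^{2x.b}$ for $L$-elements, multiplication by $q$ for the center), and conjugating those formulas by the relabelling $x\mapsto f_*(x)$ reproduces exactly the action twisted by the symplectic automorphism, since $f_*$ preserves the intersection pairing and $f_*(L)=L$, $f_*(L^\vee)$ is a complement to $L$. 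By the uniqueness clause in Stone--von~Neumann, $T_f=\mathcal S(f)$ up to a scalar in $\mathbb S^1$, so $[F_{C_f}]=[\mathcal S(f)]$ in $\mathrm{PU}(W_q)$ for all $f$ stabilizing $L$. Since $\mathcal S$ does not depend on the choice of $L$ and every $f\in\mathrm{Mod}(\Sigma)$ stabilizes \emph{some} Lagrangian (indeed any $f_*(L_0)$ works after conjugating), this pins down $\mathcal S$ on all of $\mathrm{Mod}(\Sigma)$; alternatively one passes through the action groupoid, where an arbitrary $f$ is a morphism $(\Sigma,L)\to(\Sigma,f_*L)$ and the functoriality of $F$ together with the identifications $W_q(L)\cong W_q(f_*L)$ from Stone--von~Neumann do the bookkeeping.

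The third step is to match the cocycles, i.e.\ to upgrade ``coincide up to a scalar for each $f$'' to ``isomorphic as projective representations.'' Here one observes that $F$ (respectively $\check F$) is a genuine functor on $3\Cob^{\mathrm{LC}}$, so the operators $F_{C_f}$ compose strictly: $F_{C_g}\circ F_{C_f}=F_{C_{g\circ f}}$ whenever the compositions make sense in the action groupoid. Thus the projective representation $f\mapsto[F_{C_f}]$ has a canonical \emph{linear} lift on each Lagrangian-stabilizer, exactly as the abelian TQFT on $3\Cob^{\mathrm{LC}}$ is anomaly-free (all Maslov indices vanish, as recalled in Section~\ref{sec:LC}). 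On the other side, the abelian TQFT on the full $\mathrm{3Cob}$ is the projective representation whose $2$-cocycle is the Maslov/framing anomaly, and the content of the statement is precisely that restricting to $3\Cob^{\mathrm{LC}}$ trivializes that cocycle and recovers $\mathcal S$. So the equality of the two homomorphisms to $\mathrm{PU}(W_q)$ follows from Theorem~\ref{thm:iso-tqft} applied to mapping cylinders, plus the Stone--von~Neumann uniqueness identifying $F_{C_f}$ with $\mathcal S(f)$ projectively.

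The main obstacle I expect is the careful handling of the basis identifications across different Lagrangians: the homomorphism $\mathcal S$ is defined using \emph{the same} space $W_q(L)$ twisted by $f_*$, whereas the TQFT naturally produces a map $W_q(L_-)\to W_q(L_+)$ with $L_+=f_*(L_-)$, and one must check that the canonical Stone--von~Neumann isomorphism $W_q(f_*L)\cong{}_{f_*}W_q(L)$ is (projectively) the one implicit in the functor $F$ — this is where one genuinely uses that $F$ sends $\mathbf 1\otimes\mathbf 1$ to $\mathbf 1$, so that the normalizations are compatible and no spurious scalar is introduced. A secondary point to verify is that the scalar $Z(\check C_f)=1$ for \emph{every} mapping cylinder in the action groupoid, not just those between the standard surface and itself; this follows because closing $C_f$ with Lagrangian-compatible handlebodies yields a manifold with the rational homology of $\#^g(S^2\times S^1)$, whose Murakami--Ohtsuki--Okada invariant is $1$ in the chosen normalization, but it should be stated explicitly.
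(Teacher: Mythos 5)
Your Lagrangian-preserving half is sound and is exactly the intended mechanism: by Theorems \ref{thm:main} and \ref{thm:iso-tqft} the cylinder of $f$ with $f_*(L)=L$ acts by $b_x\mapsto b_{f_*(x)}$ with $Z(\check C_f)=1$, this operator satisfies \eqref{eq:intertwinS}, and Stone--von~Neumann uniqueness identifies it with $\mathcal S(f)$ up to a phase. The genuine gap is the passage from the stabilizer of a Lagrangian to all of $\mathrm{Mod}(\Sigma)$. Your primary route rests on the claim that every mapping class stabilizes some Lagrangian, and this is false: already for a pair of simple closed curves $\alpha,\beta$ with $\alpha.\beta=1$ the class $t_\alpha t_\beta^{-1}$ acts on the corresponding genus-one summand of $H_1(\Sigma,\Z)$ by a hyperbolic matrix of trace $3$ (irrational eigenvalues), and a short argument with the characteristic polynomial shows that $t_\alpha t_\beta^{-1}$, extended by the identity elsewhere, preserves no Lagrangian of $H_1(\Sigma,\Z)$ in any genus. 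The parenthetical ``any $f_*(L_0)$ works after conjugating'' does not repair this, since $f$ stabilizes $f_*(L_0)$ only if $f_*(L_0)$ is itself invariant.

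Your fallback through the action groupoid is the right idea, but the ``bookkeeping'' you defer is precisely where the content lies: Theorems \ref{thm:main} and \ref{thm:iso-tqft} only control the TQFT on $3\Cob^{\rm LC}$, whereas the corollary concerns the TQFT on $\mathrm{3Cob}$, where the cylinder of a non-Lagrangian-preserving $f$ (same $L$ at both ends) is not a $3\Cob^{\rm LC}$-morphism. Two standard ways to close it. (i) For arbitrary $f$, the TQFT map of the mapping cylinder intertwines the skein-algebra action twisted by $f$: gluing a skein $\xi\subset\Sigma\times I$ into the incoming collar of $C_f$ gives the same cobordism-with-skein as gluing $f(\xi)$ into the outgoing collar; under the identifications $S_p(\Sigma)\cong\C[\Heis_p(\Sigma)]$ and $V(\Sigma,L)\cong W_q(L)$ established in the proof of Theorem \ref{thm:iso-tqft}, and since a blackboard-framed curve is sent to $(0,[\gamma])$ so that the induced automorphism is the symplectic one $(k,x)\mapsto(k,f_*(x))$, this is exactly \eqref{eq:intertwinS}; Stone--von~Neumann then gives $[Z(C_f)]=[\mathcal S(f)]$ in $\mathrm{PU}(W_q)$ for every $f$, with no Lagrangian condition. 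This is the Gelca--Uribe argument behind the paper's citation, and it is the cleanest fix. (ii) Alternatively, reduce to Dehn twists, which generate $\mathrm{Mod}(\Sigma)$ and each preserve some Lagrangian (any one containing the class of the twisting curve); since both sides are homomorphisms into $\mathrm{PU}(W_q)$, agreement on generators suffices, but you must then check that comparisons made at different Lagrangians are consistent --- which holds because the skein identifications $V(\Sigma)\cong W_q(L)$ are $\Heis_p$-equivariant and hence agree with the Stone--von~Neumann isomorphisms up to phase. Either repair is routine, but as written your extension step does not go through.
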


%From \cite{MasbaumRobert,GilmerMasbaum} we deduce that the projective ambiguity can be solved.
%\begin{corollary}
%The projective action of the mapping class group on the Schroedinger representation $W_q(L)$ lifts to a linear representation.
%\end{corollary}

%\begin{remark}

In general, any projective representation of a group $G$ can be linearised on an appropriate central extension. 
Given an homomorphism $R: G\rightarrow PGL(V)$, where $V$ is a complex vector space,  a choice of lift (as a set map)
$\tilde R: G\rightarrow GL(V)$ defines a defect map $c: G\times G\rightarrow \C^*$,
by $\tilde R(gg')=c(g,g')\tilde R(g)\tilde R(g')$.
In the case of a projective unitary representation the map $c$ takes values in $\mathbb{S}^1$.
It is well known from  basic group cohomology theory that $c$ is a $2$-cocycle defining a central extension of $G$ on which
$R$ can be linearised. This central extension is classified by the class $[c]\in H^2(G,\C^*)$. If this  class can be reduced to a subgroup, then the linearisation already arises on a smaller extension. If $[c]=0$, the minimal extension  is $G$ itself.

Projective actions of $\mathrm{Mod}(\Sigma)$ on
Schr\"odinger representations  are naturally  equipped with
such cohomology classes, determined by the Stone--von Neumann 
isomorphisms. 
As explained in Section \ref{sec:LC}, the homomorphism  $\mathcal{S}$ can be linearised and we use the same notation for a linearisation 
$\mathcal{S}: \mathrm{Mod}(\Sigma)\rightarrow U(W_q(L))$. 
We will show that the extension which linearises the projective representation $\mathcal{S}_\Heis$
is non trivial by computing its classifying 2-cocycle.

%From now on in this section we suppose that  $p$ is odd. 

A key observation is that, for a mapping class $f$, the automorphism
$f_{\Heis_p}: \Heis_{p}(\Sigma)\rightarrow \Heis_{p}(\Sigma)$ is equal to the symplectic one composed with an inner automorphism
$$f_{\Heis_p}(k,x)=(k+\theta_f(x),f_*(x))=(0,f_*(t_f))(k,f_*(x))(0,-f_*(t_f)), $$
where $2t_f\in H_1(\Sigma,\Z_p)$ is the Poincar\'e dual of $\theta_f$, i.e. $\theta_f (x) = 2t_f.x$. Here we use that $2$ is invertible modulo $p$ and that $f_*(t_f).f_*(x)=t_f.x$.
Acting on $W_q(L)$ we get 
%the intertwinning relation
%$$\rho(0,t_f)\circ\rho(f_\Heis(k,x))=\rho(k,f_*(x))\circ \rho(0,t_f)\ .$$
%We summarize in 
the following commutative diagram
\[
\begin{tikzcd}
W_q(L) \ar[rr,"\mathcal{S}(f)"] \ar[d,"{\rho(k,x)}",swap] &
& {}_{f_*}\!W_q(L) \ar[rr,"{\rho(0,f_*(t_f))}"]\ar[d,"{\rho(k,f_*(x))}"] &&{}_{f_{\Heis_p}}\!W_q(L)\ar[d,"{\rho(f_{\Heis_p}(k,x))}"]\\
W_q(L) \ar[rr,"\mathcal{S}(f)"]&& {}_{f_*}\!W_q(L)\ar[rr,"{\rho(0,f_*(t_f))}"]&&{}_{f_{\Heis_p}}\!W_q(L)
\end{tikzcd}
\]
Hence  the two projective  actions
are related as follows 
$$\mathcal{S}_\Heis(f)=  \rho(0,f_*(t_f))\circ \mathcal{S}(f)=\mathcal{S}(f)\circ \rho(0,t_f)\ .$$

We can now compute the cocycle from the intertwining isomorphism
$$W_q(L)\cong {}_{(fg)_\Heis}\!W_q(L)={}_{g_\Heis}\!\!\left({}_{f_\Heis}\!W_q(L)\right)\ .$$
\begin{align*}
\mathcal{S}_\Heis(f) \circ \mathcal{S}_\Heis(g)&= \mathcal{S}(f)\circ  \rho(0,t_f) \circ \mathcal{S}(g)\circ  \rho(0,t_g)\\
&= \mathcal{S}(f)\circ \mathcal{S}(g)\circ \rho(0,g_*^{-1}(t_f))\circ   \rho(0,t_g)
\\
&=    \mathcal{S}(f)\circ \mathcal{S}(g)\circ \rho(g_*^{-1}(t_f).t_g,g_*^{-1}(t_f)+t_g)\\
&=q^{g_*^{-1}(t_f).t_g }   \mathcal{S}_\Heis(fg).
\end{align*}
%\textcolor{red}{where we omit the index $p$  for simplicity.}
Here we used that the crossed homomorphism property $\theta_{fg}=\theta_g+g^*(\theta_f)$
implies for  the Poincar\'e dual 
$t_{fg}=t_g+g_*^{-1}(t_f)$.
Using $t_{gg^{-1}}=t_{g^{-1}}+g_*(t_g)=0$,
we get that the cocycle  is equal to $q^{c(f,g)}$ where $c(f,g)= g_*^{-1}(t_f).t_g=t_f.g_*(t_g)=-t_f.t_{g^{-1}}$.

Morita studied in \cite{Morita1997} the intersection cocycle $(f,g)\mapsto c_{\mathrm{Mor}}(f,g)=t_{f^{-1}}.t_{g}=c(g,f)$ which represents $12c_1$ where $c_1$ is the Chern class  generating $H^2(\mathrm{Mod}(\Sigma),\Z)=\Z$ for surfaces  of genus  at least $3$. The Meyer cocycle $\tau(f,g)$ is the signature of the oriented $4$-dimensional manifold defined as the surface bundle over the pair of pants with monodromy $f$ and $g$ on $2$ boundary components. This definition is symmetric in $f$ and $g$ so that we have $\tau(f,g)=\tau(g,f)$. From Morita work we have that $[c_{\mathrm{Mor}}]=3[\tau]=12 c_1$. By switching the variable we get $[c]=3[\tau]=12 c_1$.
It follows that for odd $p$ the projective action $\mathcal{S}_\Heis: \mathrm{Mod}(\Sigma)\rightarrow PU(W_q)$ cannot be linearised on the mapping class group while the symplectic action does.

By Corollary \ref{thm:Gelca},  abelian and hence Schr\"odinger TQFT 
reproduces  symplectic action 
$$ \mathcal{S}: \mathrm{Mod}(\Sigma)\rightarrow \mathrm{PU}(W_q)$$
after restricting to mapping classes preserving  Lagrangians. 
We just argued that $\mathcal S_\Heis$ is essentially different from $\mathcal S$.
This leads us
to the following problem.

\begin{problem}
Construct a TQFT on $3\mathrm{Cob}^{\rm{LC}}$ using Schr\"odinger representations
that recovers the action $\mathcal{S}_{\Heis}$ on mapping cylinders.
\end{problem}

\section{Appendix}\label{sec:app}

%We want to extend the Juh\'{a}sz's result \cite[Theorems 1.7-1.8]{juhasz2018defining} to the case of $\mathrm{3Cob}^{LC}$ category. 
For the reader's convenience we 
adapt here the Juh\'{a}sz presentation of  cobordism categories from \cite{juhasz2018defining} to $\mathrm{3Cob}^{\rm LC}$. 
%give here some definition from this paper (sometimes slightly modified for our purposes).

%\begin{dfn} 
Let us define the category $\mathbf{Surf}^{\rm LC}$, 
which is an analogue of $\mathbf{Man}_2$ in \cite{juhasz2018defining},
as follows.
An object of $\mathbf{Surf}^{\rm LC}$
is an oriented compact surface $\Sigma$ with $S^1$-parametrized boundary $S^{1}\xrightarrow{\simeq}\partial \Sigma$ equipped with a Lagrangian $L\subset H_1(\Sigma;\mathbb{Z})$. A morphism from $(\Sigma_-,L_-)$ to $(\Sigma_+,L_+)$ is a diffeomorphism $d:\Sigma_-\rightarrow \Sigma_+$ preserving the boundary and  Lagrangians, so that $d_*(L_-)=L_+$.
%\end{dfn}

A framed sphere in $(\Sigma,L)\in \mathbf{Surf}^{\rm LC}$ is a smooth orientation--preserving embedding $\bS:S^{k}\times D^{2-k}\hookrightarrow \Sigma$ for $k=0,1$ such that $\mathrm{Im}(\bS)\cap \partial \Sigma=\varnothing$. By performing surgery on $\Sigma$ along $\bS$ we construct 
$$\Sigma(\bS)=(\Sigma\backslash \mathrm{Im}(\bS))\cup_{S^k\times S^{1-k}} D^{k+1}\times S^{1-k}$$ 
 for $k=0,1$. The trace of the surgery
 $$M(\bS)=(\Sigma\times I)\cup_{\mathrm{Im}(\bS)\times\{1\}} D^{k+1}\times D^{2-k}$$ is a 3-cobordism from $\Sigma$ to $\Sigma(\bS)$ obtained by attaching the handle $D^{k+1}\times D^{2-k}$ to $\Sigma \times I$
 with the Lagrangian $$L_{M(\bS)}:=\mathrm{Ker} \left(i_*: H_1(\Sigma)\oplus H_1(\Sigma(\bS))\to H_1(M(\bS))\right).$$ Let us now define the induced Lagrangian in $H_1(\Sigma(\bS);\mathbb{Z})$ as  
 %$L_{\Sigma(\bS)}=
 $L_{M(\bS)}.L$.
We conclude that the surgery along $\bS$ induces a morphism in $\mathrm{3Cob}^{\rm LC}$ between $(\Sigma,L)$ and $(\Sigma(\bS),L_{M(\bS)}.L)$ which we  denote by $(M(\bS), L_{M(\bS)})$.

Let $\mathcal{G}^{\rm LC}$ be the directed graph obtained  from the category $\mathbf{Surf}^{\rm LC}$ by adding an edge $e_{\Sigma,\bS}$ from $(\Sigma,L)$ to  $(\Sigma(\bS),L_{M(\bS)}.L)$  for every $(\Sigma,L)$ and  $\bS$ in $\Sigma$.

Let $\mathcal{F}(\mathcal{G}^{\rm LC})$ be the free category generated by $\mathcal{G}^{\rm LC}$.

\begin{definition}\cite[Definition 1.4]{juhasz2018defining} The set of relations $\mathcal{R}$ in $\mathcal{F}(\mathcal{G}^{\rm LC})$ 
is defined as follows:
\begin{enumerate}
 \item $e_d\circ e_{d'}=e_{d\circ d'}$, where $e_d,e_{d'}$ and $e_{d\circ d'}$ are edges corresponding to diffeomorphisms. If $d$ is isotopic to the identity, then $e_d=\id_\Sigma$. Also $e_{\Sigma,\varnothing}=\id$.
 %$e_{\Sigma,\varnothing}\sim \mathrm{Id}_{\Sigma}$, and if $d\in \mathrm{Diff}_0(\Sigma)$ then $d\sim \mathrm{Id}_{\Sigma}$
    \item For an orientation preserving diffeomorphism $d:\Sigma\rightarrow \Sigma'$ sending the Lagrangian $L$ to $L'$ and for a framed sphere $\bS\subset \Sigma$, let $\bS' =d\circ \bS$ and $d^{\bS}$ be the induced diffeomorphism, then $e_{\Sigma,\bS'}\circ e_{d}= e_{d^{\bS}}\circ e_{\Sigma,\bS}$
    \item If $\bS$ and $\bS'$ are two disjoint framed spheres in $\Sigma$, $(\Sigma,L)\in \mathrm{3Cob}^{\rm LC}$, then $e_{\Sigma,\bS}$ and $e_{\Sigma,\bS'}$ commute.
    \item If $\bS'\subset \Sigma(\bS)$ is a framed sphere of index 1, $\bS$ is a framed sphere of index 2 and the attaching sphere $a(\bS')$ intersects the belt sphere $b(\bS)$  transversely at one point.  Then $e_{\Sigma(\bS),\bS'}\circ e_{\Sigma,\bS}= e_{\varphi}$,
    where $\varphi:\Sigma\to \Sigma(\bS)(\bS')$ is a diffeomorphism which is identical on $\Sigma\cap\Sigma(\bS)(\bS')$ and unique up to isotopy (see details in \cite[Definition 2.17]{juhasz2018defining}).
    \item  $e_{\Sigma,\bS}= e_{\Sigma,\overline{\bS}}$, where $\overline{\bS}$ is the same sphere with the opposite orientation.
%\end{itemize}
\end{enumerate}
\end{definition}

Define the functor $P:\mathcal{F}(\mathcal{G}^{\rm LC})\rightarrow \mathrm{3Cob}^{\rm LC}$ by sending vertices to itself, diffeomorphisms to mapping cylinders and the edges $e_{M,\bS}$ to cobordisms $M(\bS)$. The aim of this section is to prove that $P$ descends to a functor on $\mathcal{F}(\mathcal{G}^{\rm LC})/{\mathcal{ R}}$, which is an isomorphism of categories.

\vspace{0.5cm}

\subsection{Morse data and Cerf decompositions}

Given two objects  $(\Sigma_{-},L_-),(\Sigma_{+},L_{+})$ in $3\Cob^{\rm LC}$ and $[M]\in 3\Cob^{\rm LC}((\Sigma_{-},L_-),(\Sigma_{+},L_{+}))$, where $L_-\subset H_1(\Sigma_-)$, $L_+\subset H_1(\Sigma_{+})$ and $L_M\subset H_1(\Sigma_-)\oplus H_1(\Sigma_+)$ are the corresponding Lagrangians with $L_+=L_M.L_-$. 
Here we write $[M]$ to emphasise that we refer to an equivalence class of 3-cobordisms. Recall that two 3-cobordisms are equivalent if they are diffeomorphic and the restriction of the diffeomorphism to the boundary respects
the parametrizations (compare \cite[Def. 2.2]{juhasz2018defining}).

Let us choose a representative $M$ of the equivalence class $[M]$.  The boundary $\partial M$ is decomposed as 
$$\partial M=\partial_-(M)\cup \partial_0(M)\cup \partial_+(M)$$ where $\partial_\pm(M)$ is identified with $\Sigma_\pm$ and $\partial_0(M)$ is parametrised by $I\times S^1$.

\begin{definition} A Morse datum for $M$ consists of a pair $(f,\mathbf{b}, v)$ of a Morse function on $M$, an ordered tuple $\mathbf{b}=(0=b_0<b_1<\dots<b_m=1)\subset \R$ and a gradient-like field $v$ for $f$ such that:
\begin{itemize}
\item {  on $I\times S^1$, $f$ coincides with the first coordinate map $(t,x)\mapsto t$  and $v=\frac{\partial}{\partial t}$;}
    \item $\Sigma_-=f^{-1}(b_0)$ and $\Sigma_+=f^{-1}(b_m)$ are the sets of minima and maxima of $f$;
    \item each $f^{-1}(b)$ is connected, and $f$ has no critical points of index 0 and 3;
    \item $f$ has different values at critical points;
    \item all critical points belong to the interior of $M$;
    \item $b_1,\dots,b_{m-1}$ are regular values of $f$, such that each $(b_i,b_{i+1})$
     contains at most one critical point.
\end{itemize}
\end{definition}

The Lemma 2-1 in \cite{borodzik2016morse} can be applied here with $Y=\partial_0(M)$ proving that Morse functions with the required boundary conditions exist on any morphism $M$ in $3\Cob^{\rm LC}$. Using that Morse functions  with different values at critical points are generic, in particular dense in the space of functions the proof given there show existence of a Morse function with the required boundary conditions and distinct values at critical points. We obtain a gradient-like vector field by using a Riemannian metric (cf Lemma 1-6) in \cite{borodzik2016morse}). We first choose a collar of $\partial_0(M)$, $]-1,0]\times I \times S^1\subset M$ on which we put the product Riemannian metric $g_1$. Then we take any Riemannian metric $g_2$ on $M\setminus ]-\frac{1}{2},0]\times I \times S^1$. Using a partition of unity we build a metric $g$ on $M$ which coincides with $g_1$ on $[-\frac{1}{2},0]$ and $g_2$ on $M\setminus ]-1,0]\times I \times S^1$. Using $g$ we get a gradient vector field $\nabla f$ for the Morse function $f$ and obtain a Morse datum (compare with Lemma 1.7 in \cite{borodzik2016morse}).

%Note that Morse datum exists for any 3-cobordism $M$.
%%$(M,L_M):(\Sigma_-,L_-)\rightarrow (\Sigma_+,L_+)$. 
%Indeed, since $$\partial M=\Sigma_-\cup_{S^1\times \{0\}}(S^1\times I)\cup_{S^1\times\{1\}}\Sigma_+$$ we can choose a trivial Morse function on the cylinder $S^1\times I$, say $f(x,y)=y$ for $(x,y)\in S^1\times I$,  extend it to the interior of $M$ and then choose $v$ such that when restricted to the boundary, it is tangent to $\partial M$ \cite[Lemma 2.1]{borodzik2016morse}.

%\textbf{Definition 5.} 
%For a given representative $M$ of the 3-cobordism class define the associated cobordism $\tilde{M}:=M\cup_{S^1\times I} D^2\times I$, where 
%%$S^1\times I$ is the same cylinder defined above, and 
%$S^1\times I= (\partial D^2)\times I$. Then we can choose a smooth structure on $\tilde{M}$ and consider it as a  cobordism between closed oriented connected surfaces $\tilde{\Sigma}_-:=\Sigma_-\cup_{\partial\Sigma_-=\partial D^2}D^2$ and $\tilde{\Sigma}_+=\Sigma_+\cup_{\partial\Sigma_+=\partial D^2}D^2$. If $(f,\mathbf{b})$ is a Morse datum for $M$, then we can extend it to $\tilde{M}$ trivially (without any critical points in $D^2\times I$) and obtain a standard Morse datum $(\tilde{f},\tilde{\mathbf{b}})$ for $\tilde{M}$.

 We call a 3-cobordism $M$ {\it simple} if  it admits a Morse function with at most one critical point. A Cerf decomposition of $M$ is a presentation of $M$ as  a composition of simple cobordisms. We will need here a refined
 notion.

\begin{definition} A {\it parametrized Cerf decomposition} of $(M, L_M)$ %induced by a Morse datum $(f,\mathbf{b},v)$ is the sequence 
% equipped with Lagrangians as follows:
consists of the following data:
\begin{itemize}
\item
a Cerf decomposition $$(M,L_M)=(M_m,L_{M_m})\circ (M_{m-1},L_{M_{m-1}})\circ\dots\circ (M_1,L_{M_1})$$ 
where  $(M_i, L_{M_i}):(\Sigma_{i-1}, L_{i-1})\to
(\Sigma_{i}, L_{i})$ are simple cobordisms,
$L_0=L_-$ and the other $L_i$ are defined inductively by
$L_{i}:=L_{M_i}.L_{i-1}$ with
 a $L_{M_i}=\Ker( H_1(\Sigma_{i-1})\oplus H_1(\Sigma_{i})\rightarrow H_1(M_i))$;
\item
a framed attaching sphere $\bS_i\in \Sigma_i$ and a diffeomorphism $D_i: M(\bS_i)\to M_i$, such that
$D_i|_{\Sigma_i(\bS_i)}: \Sigma_i(\bS_i)\to \Sigma_{i+1}$ is an orientation preserving diffeomorphism, well-defined up to isotopy, and
$D_i|_{\Sigma_{i-1}}=\id$.
\end{itemize}

\label{app:cerf}
\end{definition}

%\begin{itemize}
%    \item for each $M_i$ define $L_{M_i}=\Ker(i_*: H_1(\Sigma_{i-1})\oplus H_1(\Sigma_{i})\rightarrow H_1(M_i))$ as before;
%%\end{itemize}

\begin{proposition} Each simple cobordism $M$ is either diffeomorphic to a mapping cylinder or to a 3-cobordism $M(\bS)$ obtained by index 1 or 2 surgery. 
%Furthermore, the choice of the attaching sphere or the diffeomorphism.
% is defined by the flow of $v$.
%as described in Definition \ref{app:cerf}.
\end{proposition}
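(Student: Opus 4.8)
The plan is to run the classical Morse-theoretic classification of elementary cobordisms and to read it off in the surgery language recalled at the beginning of this Appendix. Since $M$ is simple it carries, by definition, a Morse function with at most one critical point, and the first step is to promote such a function to a Morse datum $(f,\mathbf{b},v)$ that still has at most one critical point. On a collar of $\partial_0(M)\cong I\times S^1$ one arranges $f$ to be the first-coordinate projection and $v=\partial/\partial t$, by the same collar-and-partition-of-unity construction used above to produce Morse data \cite{borodzik2016morse}; this modification is supported in a region where $f$ is already free of critical points, so it creates no new interior critical points. Connectivity of every regular level $f^{-1}(b)$ is then automatic from connectivity of $M$, $\Sigma_-$ and $\Sigma_+$ together with the exclusion of index $0$ and index $3$ critical points, and one takes $\mathbf{b}=(0,1)$ or $(0,\tfrac{1}{2},1)$. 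One then splits into two cases according to the number of critical points of $f$.

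If $f$ has no critical point, rescaling $v$ so that $f$ increases at unit speed and integrating the resulting field yields a diffeomorphism $\Phi\colon\Sigma_-\times[0,1]\xrightarrow{\sim}M$ with $f\circ\Phi=\mathrm{pr}_{[0,1]}$ and $\Phi|_{\Sigma_-\times\{0\}}=\id$; the product form of $(f,v)$ on $\partial_0(M)$ forces $\Phi$ to be the tautological parametrisation there. Composing $\Phi|_{\Sigma_-\times\{1\}}$ with the boundary identification of $\partial_+(M)$ with $\Sigma_+$ gives a diffeomorphism $d\colon\Sigma_-\to\Sigma_+$, and $M$ is diffeomorphic, compatibly with the parametrised boundaries, to the mapping cylinder $C_d$. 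Since $M$ is a morphism in $3\Cob^{\rm LC}$ we have $d_*(L_-)=L_M.L_-=L_+$, so $M$ is in fact a mapping cylinder in $\mathbf{Surf}^{\rm LC}$.

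If $f$ has exactly one critical point $p$, its Morse index $k$ is neither $0$ nor $3$ by the conditions on a Morse datum, hence $k\in\{1,2\}$. By the local normal form of $f$ near $p$ and the standard handle-attachment picture, passing the critical level amounts to attaching a single handle $D^{k}\times D^{3-k}$ to $\Sigma_-\times[0,1]$ along $\bS\times\{1\}$, where $\bS\colon S^{k-1}\times D^{3-k}\hookrightarrow\Sigma_-=f^{-1}(0)$ is the descending sphere of $p$ pushed down by the flow of $v$, equipped with the framing supplied by the Morse chart (orientation-preserving since $M$ is oriented). As $p$ is interior and $f$ is a coordinate near $\partial_0(M)$, the image of $\bS$ avoids $\partial\Sigma_-$, so $\bS$ is an index $k$ framed sphere in the sense of this Appendix: for $k=1$ a framed $S^0$, raising the genus by one, and for $k=2$ a framed $S^1$, i.e.\ surgery along a simple closed curve, which must be non-separating because all levels remain connected. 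The handle model then identifies $M$, rel $\Sigma_-$ and $\partial_0(M)$, with the surgery trace $M(\bS)$ (with transported Lagrangian $L_{M(\bS)}.L_-=L_+$), so $M$ is diffeomorphic to a $3$-cobordism obtained by index $1$ or index $2$ surgery.

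The only genuinely delicate step is the first one, producing a Morse datum with at most one critical point: imposing the boundary normalisation on $\partial_0(M)$ and the connectivity of all level sets without destroying the bound on the number of interior critical points. Everything afterwards is the classical local model of a single handle, and the translation into framed spheres $\bS$ and their traces $M(\bS)$ is precisely the construction recalled before this proposition; I expect the write-up to consist mostly of that bookkeeping rather than any new geometric input.
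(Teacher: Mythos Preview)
Your argument is correct and follows essentially the same route as the paper: produce a Morse datum with at most one critical point, use the flow to trivialise in the regular case, and in the singular case intersect the stable manifold of the unique critical point with $\Sigma_-$ to obtain the framed sphere $\bS$ identifying $M$ with $M(\bS)$. The paper's proof is considerably terser and simply cites \cite[Section 2.2]{juhasz2018defining} for the handle model, whereas you spell out the boundary normalisation and the framing bookkeeping explicitly.
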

\begin{proof}
A simple cobordism $M$ admits a Morse datum $(f,v)$ with at most one critical point of index $k=1,2$. In the case with no critical point, the flow defines a trivialisation
$M\cong I\times \Sigma_-$ which is a mapping cylinder. In the case of a critical point $p$, the  stable and unstable manifolds $M^s(p),\: M^{u}(p)$ of the critical point $p$ do not intersect $\partial_0(M)$, then the intersection of $M^s(p)$ with $\Sigma_-$ defines
a  $(k-1)$-sphere ($k$ is the index) which can be framed, giving a framed sphere $\bS$ and a diffeomorphism 
$\Sigma_-(\bS)\cong M$ well defined up to isotopy \cite[Section 2.2]{juhasz2018defining}.
%%We consider  the associated cobordism $\tilde{M}$ first. 
%Any simple cobordism is either a mapping cylinder or a cobordism $M(\bS)$ obtained by an index 1 or 2 surgery \cite{milnor2015lectures}, we just need to prove that in all these cases it can be restricted to $M$ as follows:
%
%\begin{itemize}
%    \item If $\tilde{M}$ is a mapping cylinder, then the corresponding diffeomorphism $\tilde{d}$ can be restricted to $\Sigma_+\subset \tilde{\Sigma}_+$ which induces a diffeomorphism $d$ of punctured surfaces and the corresponding mapping cylinder $M_{d}$.
%    \item If $\tilde{M}$ is a cobordism obtained by index 1 or 2 surgery, then we need to show that stable and unstable manifolds $M^s(p),\: M^{u}(p)$ of the critical point $p$ belongs to the interior of $M$. But it follows from the fact that $p\in (0,1)$ and all the trajectories of a gradient-like field starting from $\partial\Sigma_-\subset \tilde{M}$ belongs to $\partial M\subset \tilde{M}$ so that trajectories from or into the critical point do not intersect $\partial M$ and, as follows, belongs to $\mathrm{Int}(M)$.
%\end{itemize}
\end{proof}

More generally, for any cobordism $M$ in $3\Cob^{\rm LC}$ a Morse datum  $(f,\mathbf{b},v)$ induces a parametrized Cerf decomposition.
%for all cobordisms in our category. 
Indeed, simple  cobordisms  between level sets $\Sigma_{i}=f^{-1}(b_i)=M_{i}\cap M_{i+1}$ are
$M_i=f^{-1}([b_{i-1},b_{i}])$. 
Furthermore, the flow of $v$ defines the induced attaching sphere on each level set as the intersection with the stable manifold $M^s(p)$ of the  corresponding critical point if it exists. If not, this flow   defines a diffeomorphism between $\Sigma_i$ and $\Sigma_{i+1}$
 which is identity on the boundary.

For the reader convenience let us show
 that Cerf decompositions preserve Lagrangian correspondences.
%{\color{red} I think this is well known. May be it's in Turaev's book which I don't have here.}

\begin{lemma} Assume $(\Sigma_i, L_i)$ are objects of $3\Cob^{\rm LC}$ for $0\leq i\leq m$. For a Cerf decomposition $$(M,L_M)=(M_m,L_{M_m})\circ (M_{m-1},L_{M_{m-1}})\circ\dots\circ (M_1,L_{M_1})$$ 
%is a morphism of $3\Cob^{\rm LC}$ 
we have  $L_{m}=L_M.L_0$.
\label{lem: decomp}
\end{lemma}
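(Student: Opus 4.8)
The plan is to reduce the statement to a single assertion about composing two Lagrangian correspondences, and then to iterate. By induction on $m$ it suffices to prove the case $m=2$: if $(M,L_M) = (M_2,L_{M_2})\circ (M_1,L_{M_1})$ with $M_1:\Sigma_0\to\Sigma_1$ and $M_2:\Sigma_1\to\Sigma_2$, then $L_{M} = L_{M_2}.(L_{M_1}.L_0)$ for any Lagrangian $L_0\subset H_1(\Sigma_0)$. Indeed, once this is known, one writes $M = M_m\circ(M_{m-1}\circ\cdots\circ M_1)$, applies the inductive hypothesis to the second factor to get that its correspondence sends $L_0$ to $L_{m-1}$, and then applies the $m=2$ case. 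So the real content is the following \emph{associativity / functoriality of Lagrangian correspondences under gluing}: with $M = M_2\cup_{\Sigma_1} M_1$, one has $L_M.L_0 = L_{M_2}.(L_{M_1}.L_0)$.

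The key step is a Mayer--Vietoris argument. Since $M = M_1\cup_{\Sigma_1} M_2$, the Mayer--Vietoris sequence gives an exact piece
\begin{equation*}
H_1(\Sigma_1)\xrightarrow{(j_{1*},-j_{2*})} H_1(M_1)\oplus H_1(M_2)\to H_1(M)\to \widetilde H_0(\Sigma_1)=0,
\end{equation*}
where the last term vanishes because $\Sigma_1$ is connected. I would first unwind the definition: $L_M = \Ker\bigl(H_1(\Sigma_0)\oplus H_1(\Sigma_2)\to H_1(M)\bigr)$, and a pair $(x_0,x_2)$ lies in $L_M$ iff, after including into $H_1(M_1)\oplus H_1(M_2)$, the images of $x_0$ and $x_2$ become equal in $H_1(M)$; by exactness this happens iff there exists $x_1\in H_1(\Sigma_1)$ whose images into $H_1(M_1)$ and $H_1(M_2)$ ``correct the difference'', i.e. $(x_0, -x_1)\in L_{M_1}$ and $(x_1, x_2)\in L_{M_2}$ (signs according to the orientation conventions for $-\Sigma_1$). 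Thus
\begin{equation*}
L_M = \{(x_0,x_2) \mid \exists\, x_1\in H_1(\Sigma_1):\ (x_0,x_1)\in L_{M_1},\ (x_1,x_2)\in L_{M_2}\},
\end{equation*}
which is exactly the composition of relations $L_{M_2}\circ L_{M_1}$. Granting this, $L_M.L_0 = (L_{M_2}\circ L_{M_1}).L_0 = L_{M_2}.(L_{M_1}.L_0)$ is a purely set-theoretic identity about composing relations and taking images, which I would check directly from the definition of $L_C.L$.

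The main obstacle I anticipate is bookkeeping rather than conceptual: getting the orientation signs right in the Mayer--Vietoris map (the boundary of $M_i$ is $-\Sigma_{i-1}\sqcup\Sigma_i$, and the two copies of $\Sigma_1$ enter $\partial M_1$ and $\partial M_2$ with opposite orientations), and making sure that the identification of $\widetilde H_0(\Sigma_1)=0$ — hence the surjectivity needed to invert the correspondence composition — genuinely uses connectedness of the intermediate level surfaces, which is part of the hypothesis that $(\Sigma_i,L_i)\in 3\Cob^{\rm LC}$. One should also note that $L_M$ is automatically a Lagrangian (half-dimensional, isotropic) — this is standard for the kernel of $H_1(\partial C)\to H_1(C)$ for a compact oriented $C$ by Poincar\'e--Lefschetz duality, and need not be reproved here. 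With these conventions pinned down, the induction closes immediately and gives $L_m = L_M.L_0$.
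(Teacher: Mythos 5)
Your proposal is correct, but it proves the lemma by a different key step than the paper. Both arguments reduce to the case of a single gluing $M=M_2\circ M_1$ and both contain the easy observation that if $(x_0,x_1)$ dies in $H_1(M_1)$ and $(x_1,x_2)$ dies in $H_1(M_2)$ then $(x_0,x_2)$ dies in $H_1(M)$; but the paper stops there and closes the argument by maximality: both $L_{M_2}.(L_{M_1}.L_0)$ and $L_{M}.L_0$ are Lagrangian, so the one inclusion forces equality. You instead prove the reverse inclusion directly, via exactness of the Mayer--Vietoris sequence at $H_1(M_1)\oplus H_1(M_2)$, thereby identifying $L_M$ with the composite of the two correspondences. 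This buys a strictly stronger statement (functoriality of the correspondences under gluing, with no Lagrangian hypotheses on either side), whereas the paper's shortcut is briefer but leans on the fact that the image of a Lagrangian under a cobordism correspondence is again Lagrangian, which over $\Z$ is part of the ambient setup rather than something reproved. Two small points to tidy in your write-up: first, with the paper's convention that $H_1(-\Sigma_1)$ is identified with $H_1(\Sigma_1)$ without a sign, the Mayer--Vietoris characterization reads $(x_0,x_2)\in L_M$ iff there is $x_1$ with $(x_0,-x_1)\in L_{M_1}$ and $(x_1,x_2)\in L_{M_2}$, so your displayed formula for $L_M$ as the naive relational composite holds only after this sign twist on the middle variable; this is harmless for the identity $L_M.L_0=L_{M_2}.(L_{M_1}.L_0)$ because $L_0$ and the correspondences are submodules, hence closed under negation, but it should be said. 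Second, the step you use is exactness at the middle term of Mayer--Vietoris, which holds regardless of connectedness of $\Sigma_1$; the vanishing of $\widetilde H_0(\Sigma_1)$ plays no role, so that remark can be dropped.
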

\begin{proof} 
%ref( computation of Lafrangians in the main part of the article)
%It follows from previous computations \ref{} that simple cobordisms send Lagrangians to Lagrangians by Lagrangian correspondence.
%For simple cobordisms the statement is checked in 
%the proof of Theorem \ref{thm:main}.

It suffices to show that for two  morphisms $M_1:(\Sigma_-,L_-)\rightarrow (\Sigma, L)$ and $M_2:(\Sigma,L)\rightarrow (\Sigma_+,L_+)$ from $\CobL$, their composition is also a morphism from $3\Cob^{\rm LC}$, i.e. 
\begin{equation}\label{Lagrangian_property}
    L_{M_2}.(L_{M_1}.L_-)=L_{M_2\circ M_1}.L_-.
\end{equation}
Since both sides of \eqref{Lagrangian_property} are Lagrangians, it's enough to show that any $x\in L_{M_2}.(L_{M_1}.L_-)$ also belongs to $L_{M_2\circ M_1}.L_-$. By the definition of Lagrangian correspondence $\exists y\in H_1(\Sigma)$ and $\exists z\in L_{\Sigma_-}$, such that $(x,y)\in L_{M_2}$ and $(y,z)\in L_{M_1}$. Hence $(x,z)\in L_{M_2\circ M_{1}}$, since the image of $(x,z)$ in $H_1(M_2\circ M_1)$ can be rewritten as $x+z=y-y=0$.
\end{proof}

%\begin{remark}
\begin{definition} A {\it diffeomorphism equivalence} of 
parametrized 
Cerf decompositions $$(M_m,L_{M_m})\circ\dots\circ (M_1,L_{M_1}) \quad \text{and} 
\quad
(M'_m,L_{M'_m}) \circ\dots\circ (M'_1,L_{M'_1})$$ of the same length is a collection of diffeomorphisms $\phi_i:M_i\rightarrow M'_i$ such that 
\begin{itemize}
    \item they are identities on the ends: $\phi_1|_{\Sigma_-}=\id_{\Sigma_-}$ and $\phi_m|_{\Sigma_+}=\id_{\Sigma_+}$;
    \item they are compatible: $\phi_i|_{\Sigma_{i}}=\phi_{i+1}|_{\Sigma_{i}}$;
    \item they preserve Lagrangians: $(\phi_i|_{\Sigma_{i}})_*(L_{i})=L'_{i}$.
\end{itemize}
\end{definition}
%\end{remark}

%\begin{definition} A framed sphere isotopy of Cerf decomposition $(M_i,L_{M_i})_{i=1,\dots,m}$ is a move defined by an isotopy  $\phi_t:\Sigma_j\rightarrow \Sigma_j, \: t\in I$ as follows.
%\begin{itemize}
%    \item It induces the diffeomorphism $\phi_1(\bS_j):\Sigma_j(\bS_j)\rightarrow \Sigma_j(\bS'_j)$, where $\bS'_j=\phi_1\circ \bS_j$;
%    \item Let  $D: M(\bS'_j)\rightarrow M(\bS_j)$, such that $D(x,t)=(\phi_t^{-1}(x),t)$ for $(x,t)\in \Sigma_j\times I$,  coincide with $(\phi_1(\bS_j))^{-1}$ on the upper boundary of $M(\bS'_j)$.
%    Then $D$ induces the isomorphism of homology groups and Lagrangians on $M(\bS_j)$ and $M(\bS'_j)$.
%\end{itemize}
%{\color{red} I do not understand the second point}
%\end{definition}
%{\bf [AB]} Who is $\bS'_j$ above?

As explained in \cite{juhasz2018defining} (before Thm. 2.24) diffeomorphism equivalences are induced by left-right equivalence of Morse data.
Let us recall here the definition of the latter for completeness.
Two Morse data $(f,\mathbf{b},v)$ and $(f',\mathbf{b}',v')$ are related by a {\it left-right equivalence} if there are diffeomorphisms $\Phi:M\rightarrow M$ and $\phi:\R\rightarrow\R$, such that $f'=\phi\circ f\circ\Phi^{-1}$, $\mathbf{b}'=\phi(\mathbf{b})$, $v'=\Phi_*(v)$, $\Phi|_{\Sigma_\pm}$ are isotopic to $\id$.
% and $\Phi|_{\Sigma_+}$ is isomorphic to $\id$. 
Then for a given Cerf decomposition we can obtain  
a diffeomorphism equivalent 
one by  
 setting $M'_i:=\Phi(M_i)$.

\begin{theorem}\label{thm:Cerf-app}
 Any two parametrised Cerf decompositions of $(M,L_M)$ are connected by sequence of the following moves (and their inverse):
\begin{enumerate}
    \item critical point cancellation: the composition of two simple morphisms $(M_i,L_{M_i})$ and $(M_{i+1},L_{M_{i+1}})$ of indices 1 and 2, whose belt and attaching spheres intersect transversally in one point, are replaced by the cylindrical morphism 
   % $$(M_{i+1}\circ M_{i},L_{WM{i+1}\circ W_{i}})\simeq 
    $(\Sigma_{i-1}\times [0,1], L_{\Sigma_{i-1}\times [0,1]})$;
    \item critical point crossing: two simple morphisms $(M_i,L_{M_i})$ and $(M_{i+1},L_{M_{i+1}})$ of indices $l$ and $k$, whose belt and attaching spheres do not intersect (for some choice of metric), are replaced by the pair of morphisms $(M'_i,L_{M'_i})$ and $(M'_{i+1},L_{M'_{i+1}})$ of indices $k$ and $l$, such that $(M_{i+1}\circ M_i, L_{M_{i+1}\circ M_i})\simeq(M'_{i+1}\circ M'_i, L_{M'_{i+1}\circ M'_i})$;
     \item cylinder gluing: two simple cobordisms $(M_i,L_{M_i})$ and $(M_{i+1},L_{M_{i+1}})$, one of which is a mapping cylinder, are replaced by the composition $(M_{i+1}\circ M_{i}, L_{M_{i+1}\circ M_{i}})$;
    % \item framed spheres isotopies;
     \item diffeomorphism equivalences.
\end{enumerate}
\end{theorem}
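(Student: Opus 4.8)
The plan is to reduce Theorem~\ref{thm:Cerf-app} to Juh\'asz's corresponding result \cite[Theorem 2.24 and Proposition 2.27]{juhasz2018defining} on unparametrized Cerf decompositions, and to check at each stage that the Lagrangian data is carried along uniquely and compatibly. So first I would observe that, forgetting Lagrangians, every parametrized Cerf decomposition of $(M,L_M)$ is in particular a parametrized Cerf decomposition of $M$ in the sense of \cite{juhasz2018defining}; hence by Juh\'asz's theorem any two of them are connected by a finite sequence of the four moves (critical point cancellation, critical point crossing, cylinder gluing, diffeomorphism equivalence) \emph{at the level of cobordisms}. The entire content of the present statement is therefore that each such move lifts to a move of parametrized Cerf decompositions \emph{of $(M,L_M)$}, i.e.\ that the Lagrangians appearing at the intermediate level sets are forced by the composition law $L_{i}=L_{M_i}.L_{i-1}$ together with $L_0=L_-$, and that the moves respect this.

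Second, I would verify the bookkeeping for each of the four moves in turn. For cylinder gluing and diffeomorphism equivalence this is immediate: a mapping cylinder $C_d$ has $L_{C_d}=\{(x,-d_*(x))\}$, whose correspondence action is $L\mapsto d_*(L)$, so composing with it and then composing the Lagrangian correspondences gives the same intermediate and final Lagrangians by Lemma~\ref{lem: decomp} (associativity of the correspondence action, which is exactly \eqref{Lagrangian_property}); for a diffeomorphism equivalence the third bullet of its definition, $(\phi_i|_{\Sigma_i})_*(L_i)=L'_i$, is precisely what is needed, and such a $\phi_i$ exists because left--right equivalence of Morse data produces it (as recalled before the theorem) and a diffeomorphism of the intermediate surface automatically carries $L_i=L_{M_i}.L_{i-1}$ to $L_{M'_i}.L_{i-1}'$ by naturality of $\Ker(i_*)$ under diffeomorphisms. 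For critical point cancellation one replaces $(M_{i+1},L_{M_{i+1}})\circ(M_i,L_{M_i})$ (indices $2$ then $1$, belt and attaching spheres meeting transversally once) by a cylinder $(\Sigma_{i-1}\times[0,1],L_{\Sigma_{i-1}\times[0,1]})$; here $L_{\Sigma_{i-1}\times[0,1]}$ is the diagonal correspondence, so the induced map on Lagrangians is the identity, and one must check $L_{M_{i+1}}.(L_{M_i}.L_{i-1})=L_{i-1}$. This is exactly the computation already carried out inside the proof of Theorem~\ref{thm:main} (the composition of an index~$1$ surgery and a cancelling index~$2$ surgery yields a mapping cylinder with $\varphi$ identity on the homology, see the paragraph around \eqref{decomposition}): the Lagrangian correspondence of the cancelling pair is the graph of $\id$. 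For critical point crossing, where the two spheres are disjoint, one cuts $\Sigma$ as a boundary connected sum so that the two surgeries are supported on disjoint pieces; then both the resulting cobordism and all Lagrangians split as direct sums, the two elementary correspondences act on different summands, and hence commute, so the intermediate Lagrangian after swapping is again forced and $(M_{i+1}\circ M_i,L_{M_{i+1}\circ M_i})\simeq (M'_{i+1}\circ M'_i,L_{M'_{i+1}\circ M'_i})$ as $3\Cob^{\rm LC}$-morphisms. The uniqueness-up-to-isotopy of the diffeomorphisms $D_i$ in Definition~\ref{app:cerf} guarantees that the induced $D_i|_{\Sigma_i(\bS_i)}$ respects the Lagrangians, since isotopic diffeomorphisms act identically on $H_1$.

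Third, there is one genuinely substantive point: one must know that the intermediate objects $(\Sigma_i,L_i)$ produced by a Morse datum really do lie in $3\Cob^{\rm LC}$, i.e.\ that $L_i$ is a Lagrangian of $H_1(\Sigma_i;\Z)$ and not merely an isotropic submodule; this is handled by Lemma~\ref{lem: decomp}, which shows the correspondence action of a genuine cobordism sends Lagrangians to Lagrangians, so inductively each $L_i$ is Lagrangian starting from $L_0=L_-$, and the final one equals $L_M.L_0=L_+$. Combining: given two parametrized Cerf decompositions of $(M,L_M)$, apply Juh\'asz to get a chain of the four moves connecting the underlying unparametrized decompositions; by the verifications above each move of this chain lifts to a move of the parametrized-with-Lagrangians decompositions, and the lifted intermediate data is determined at every stage by the correspondence recursion $L_i=L_{M_i}.L_{i-1}$. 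This yields the required chain of moves (1)--(4), proving the theorem.

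The main obstacle I expect is \emph{not} any single homological computation but rather making the reduction to \cite{juhasz2018defining} airtight: one has to be careful that Juh\'asz's moves, performed on the forgetful image, do not pass through intermediate cobordism decompositions that fail to be ``$3\Cob^{\rm LC}$-decomposable'' (i.e.\ through level sets whose induced isotropic submodule is not Lagrangian, or whose elementary pieces are not simple $3\Cob^{\rm LC}$-morphisms). The cleanest way around this is to note that \emph{every} simple cobordism occurring is one of the three types of Proposition~\ref{app:cerf} (mapping cylinder, index~$1$, index~$2$ surgery), for each of which the correspondence action on Lagrangians was computed explicitly (in the proof of Theorem~\ref{thm:main} and in Lemma~\ref{lem: decomp}), so every intermediate object is automatically an object of $3\Cob^{\rm LC}$ and the four moves never leave the category; the crossing move's disjointness hypothesis is what lets one split off the two handles, and the cancellation move is the graph-of-$\id$ computation already in hand. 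Once this is observed, the proof is essentially Juh\'asz's with the Lagrangian data as a passive passenger.
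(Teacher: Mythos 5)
Your proposal treats the passage to Juh\'asz's theorem as immediate (``forgetting Lagrangians, every parametrized Cerf decomposition of $(M,L_M)$ is in particular one in the sense of \cite{juhasz2018defining}, hence the four moves connect them''), and concentrates all the work on tracking the Lagrangian data. This inverts where the actual difficulty lies, and the step you take for granted is a genuine gap: Juh\'asz's uniqueness theorem for parametrized Cerf decompositions is stated for cobordisms between \emph{closed} surfaces (his $\mathbf{Man}_2$), whereas a morphism of $3\Cob^{\rm LC}$ has a non-empty vertical boundary $\partial_0(M)\cong I\times S^1$ on which the Morse datum is constrained ($f$ is the projection, $v=\partial/\partial t$, all critical points interior) --- this is why even the existence of Morse data had to be imported from \cite{borodzik2016morse} rather than from \cite{juhasz2018defining}. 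So one cannot simply quote Juh\'asz: the moves he produces are a priori realized by isotopies and diffeomorphisms of a closed-boundary setting and need not respect $\partial_0(M)$. The paper's proof is devoted precisely to this point: it glues $D^2\times I$ along $\partial_0(M)$ to obtain a cobordism $\tilde M$ between closed surfaces, extends both decompositions, applies \cite{juhasz2018defining} in its usual setting, and then checks move by move that everything can be performed away from the glued cylinder --- cancellation and crossing are realized by isotopies supported in tubular neighborhoods of stable/unstable manifolds, which can be taken inside $\mathrm{Int}(M)$ since all attaching and belt spheres are interior; cylinder gluing uses a diffeomorphism that is the identity outside $\Sigma_i$, hence the identity on $D^2\times I$; diffeomorphism equivalences come from isotopies that are the identity on $D^2\times I$ --- so that all moves restrict back to $M$. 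Your proposal contains no substitute for this localization argument, and without it the reduction does not go through.

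By contrast, the Lagrangian bookkeeping that you develop at length (diagonal correspondence for cylinders, associativity via Lemma~\ref{lem: decomp}, splitting for disjoint spheres, naturality under the diffeomorphisms $\phi_i$) is correct but is the easy half: since every move replaces a composition of simple pieces by a diffeomorphic cobordism, the Lagrangian correspondences, and hence the intermediate Lagrangians defined by the recursion $L_i=L_{M_i}.L_{i-1}$, are automatically preserved --- the paper disposes of this in one sentence. So the parts you verify carefully are fine, but the ``main obstacle'' you identify (intermediate submodules failing to be Lagrangian) is already covered by Lemma~\ref{lem: decomp}, while the real obstacle --- adapting the closed-surface Cerf theory of \cite{juhasz2018defining} to cobordisms with parametrized boundary --- is left unaddressed.
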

%{\color{cyan} I would formulate this theorem in terms of Morse data following Thm 2.24 in Juhasz. Belt and attaching sphere make sense with the flow.}
\begin{proof}For each cobordism $M$ in $3\Cob^{LC}$ we construct an associated cobordism $\tilde{M}$ between closed surfaces by gluing a cylinder $D^2\times I$ along the parameterised boundary $\partial_0(M)$. Then we extend both Cerf decompositions to it and apply  \cite[Theorem 1.7]{juhasz2018defining} within its usual setting. In order to restrict back to surfaces with boundaries we have to make sure that all the moves can be performed in such way that it doesn't affect the glued cylinder.
%{\color{cyan} Juhasz 1.7 is rather Thm 15 below} and restrict all moves back to $M$  as  follows:
\begin{enumerate}
\item[(1,2)] The critical point cancellation and critical point crossing moves can be obtained by an isotopy supported in a tubular neighborhood of stable and unstable manifolds \cite{cerf1970stratification}. Since all belt and attaching spheres belong to the interior of $M$,  we can always choose this neighborhood to be inside $\mathrm{Int}(M)$. Thus, this isotopy can be restricted to $M$.
    \item[(3)] Let $\tilde{M}_i$ be a mapping cylinder associated with a diffeomorphism $d:\tilde{\Sigma}_{i-1}\rightarrow \tilde{\Sigma}_i$, then the composition of this cobordism with $\tilde M(\bS)$
    is diffeomorphic to $\tilde{M}(\bS'):\tilde{\Sigma}_i\rightarrow \tilde{\Sigma}_i(\bS')$, where 
    %$\tilde{M}_{i+1}=\tilde{M}(\bS)$ and 
    $\bS'=d\circ\bS$. Since diffeomorphism $d$ is identical outside $\Sigma_i$ in $\tilde{\Sigma}_i$,  the induced diffeomorphism of cobordisms is identical on $D^2\times I$  and can be restricted.
    \item[(4,5)] Moves 4 and 5 are induced by isotopies which are identical on $D^2\times I$, hence can be restricted to $M$.
\end{enumerate}
%\end{itemize}

Since each pair of Cerf decompositions is connected by a move relating diffeomorphic cobordisms, it preserves Lagrangian correspondences.
\end{proof}

\begin{theorem}  The functor $P:\mathcal{F}(\mathcal{G}^{\rm LC})\to \CobL$ descends to a functor $\mathcal{F}(\mathcal{G}^{\rm LC})/\mathcal{R}\xrightarrow{\simeq} \mathrm{3Cob}^{\rm LC}$ which is an isomorphism of  categories.
\end{theorem}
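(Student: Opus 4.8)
The plan is to mimic Juh\'asz's proof that his analogous comparison functor is an isomorphism of categories \cite{juhasz2018defining}, tracking the Lagrangian data through every step. Three things must be established: (a) $P$ respects the relations $\mathcal R$, hence descends to the quotient; (b) the induced functor is the identity on objects and is full; (c) it is faithful.

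For (a) I would check, one relation at a time, that $P$ sends each generator of $\mathcal R$ to an equality of morphisms in $\CobL$. On the underlying $3$-manifolds these are exactly the topological identities of \cite{juhasz2018defining}: functoriality of mapping cylinders and triviality of isotopic diffeomorphisms (relations $1$--$2$), commutation of surgeries along disjoint framed spheres (relation $3$), cancellation of a complementary index-$1$/index-$2$ pair meeting transversely in one point (relation $4$), and independence of the orientation of a framed sphere (relation $5$). The only extra content is that in each case the equality is witnessed by a diffeomorphism of cobordisms which is the parametrised identity on $\partial_0$ and restricts on the boundary surfaces to a Lagrangian-preserving diffeomorphism; by \Cref{lem: decomp} (and the definition of $L_C.L_-$) this forces the Lagrangian correspondences on the two sides to agree, so the two sides coincide as morphisms of $\CobL$. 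Hence $P$ factors through $\mathcal F(\mathcal G^{\rm LC})/\mathcal R$.

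For (b), $P$ is tautologically the identity on objects. Given $[M]\in\CobL((\Sigma_-,L_-),(\Sigma_+,L_+))$, I would pick a representative $M$ and a Morse datum $(f,\mathbf b,v)$ (these exist by the argument recalled in the Appendix); this yields a parametrised Cerf decomposition of $(M,L_M)$ into simple cobordisms, each a mapping cylinder or an index-$1$ or index-$2$ surgery trace. By \Cref{lem: decomp} the associated Lagrangians compose to $L_M$, so $L_+=L_M.L_-$, as required. Each simple piece is $P$ of an edge $e_{\Sigma_i,\bS_i}$ or of a diffeomorphism edge, so the corresponding word in $\mathcal F(\mathcal G^{\rm LC})$ maps onto $[M]$, which gives fullness. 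For (c), if two words $w,w'$ satisfy $P(w)=P(w')=[M]$, each determines (after possibly inserting trivial cylinders, which is a relation in $\mathcal R$) a parametrised Cerf decomposition of a representative of $[M]$; by \Cref{thm:Cerf-app} any two such decompositions are joined by a finite sequence of critical-point cancellations, critical-point crossings, cylinder gluings and diffeomorphism equivalences, and each of these four moves is, at the level of words, an instance of a relation of $\mathcal R$ (respectively relation $4$; relation $3$; relations $1$--$2$; and the subgroup generated by relations $1$, $2$, $5$). Thus $w$ and $w'$ are identified in $\mathcal F(\mathcal G^{\rm LC})/\mathcal R$, so the functor is faithful; being full, faithful and bijective on objects, it is an isomorphism of categories.

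The main obstacle is not the Cerf-theoretic machinery — inherited from \cite{juhasz2018defining} via the closed-boundary trick used in the proof of \Cref{thm:Cerf-app} — but being careful about what a morphism of $\CobL$ \emph{is}: a cobordism together with the \emph{constraint} $L_+=L_M.L_-$, the target Lagrangian being determined rather than free. Once this is granted, verifying that every relation of $\mathcal R$ and every Cerf move preserves the correspondence (which is precisely \Cref{lem: decomp} together with the last line of the proof of \Cref{thm:Cerf-app}) is routine, and no relations beyond those descending from Lagrangian-preserving diffeomorphisms are needed.
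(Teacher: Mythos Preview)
Your proposal is correct and follows essentially the same approach as the paper: verify that $P$ sends each relation of $\mathcal R$ to an equality in $\CobL$, obtain fullness from a Morse datum and the induced parametrised Cerf decomposition (using \Cref{lem: decomp} to control the Lagrangians), and obtain faithfulness by invoking \Cref{thm:Cerf-app} and matching each Cerf move to a relation in $\mathcal R$. The paper's argument is slightly terser but structurally identical; your explicit emphasis on the Lagrangian constraint and the reference to \Cref{lem: decomp} are exactly the extra bookkeeping the paper relies on implicitly.
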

%{\bf [AB]} Did we ever spoke about monoidal structures?
%(we have a non-standard one on $\CobL$)

\begin{proof}
First we should check that relations $\mathcal R$ hold in $\CobL$. Relation $(1)$ in $\mathcal R$ can be realized by moves $(3)$ and $(4)$ in Theorem \ref{thm:Cerf-app}. Relation $(2)$ follows from  move $(4)$. Relation $(3)$ follows from the critical point crossing move. Relation $(4)$ also  follows from the critical point cancellation move. The last relation follows from the fact that $M(\bS)=M(\overline{\bS})$.

Now we prove that $P$ is a bijection on the hom-sets. If we have a morphism $(M,L)\in\CobL$, then it admits a Morse datum, and hence as discussed above  the induced parametrized Cerf decomposition $$(M,L)=(M_m,L_{M_m})\circ (M_{m-1},L_{M_{m-1}})\circ\dots\circ (M_1,L_{M_1})$$ 
with $M_i$ in $\CobL$ for $1\leq i\leq m$ by Lemma \ref{lem: decomp}.
Then assigning to each simple morphism $(M_{i},L_{M_{i}})$ an edge in $\mathcal{G}^{\rm LC}$ ($e_{\Sigma,\bS}$ to surgeries and  $e_d$ to mapping cylinders), we obtain a preimage of $(M,L)$ in $\mathcal{G}^{\rm LC}$. This proves that $P$ is surjective onto morphisms of $\CobL$.

Assume we have  two morphisms $f,f'$ in $\mathcal{F}(\mathcal{G})/\mathcal{R}$ which give the same morphism in $\CobL$. Each of them can be represented by some composition of edges in $\mathcal{G}$, inducing natural Cerf decompositions on $P(f)$ and $P(f')$. Then these decompositions are connected by a sequence of moves listed in Theorem \ref{thm:Cerf-app}. Since each move corresponds to a relation in $\mathcal{R}$ the morphisms $f$ and $f'$ belong to the same equivalence class in $\mathcal{F}(\mathcal{G}^{\rm LC})/\mathcal{R}$. This proves that $P$ is injective.
\end{proof}

As a corollary, we have the following construction of TQFTs on $\CobL$.

\begin{corollary} \label{app:cor}
Assume a map $F:\mathcal{G}^{\rm LC}\rightarrow \mathrm{Vect}_{\C}$ (sending vertices to vector spaces and arrows to linear maps)  satisfies the following properties:
\begin{enumerate}
    \item $F(e_{\Sigma,\varnothing})=\id$ and if $d$ is isotopic to the identity, then $F(d)=\id$.
    \item For an orientation preserving diffeomorphism $d:\Sigma\rightarrow \Sigma'$ sending the Lagrangian $L$ to $L'$ and a framed sphere $\bS\subset \Sigma$, let $\bS' =d\circ \bS$ and $d^{\bS}$ be the induced diffeomorphism, then $F(e_{\Sigma,\bS'})\circ F(e_{d})= F(e_{d^{\bS}})\circ F(e_{\Sigma,\bS})$
    \item If $\bS$ and $\bS'$ are two disjoint framed spheres in $\Sigma$, $(\Sigma,L)\in \mathrm{3Cob}^{\rm LC}$, then $F(e_{\Sigma,\bS})$ and $F(e_{\Sigma,\bS'})$ commute.
    \item If $\bS'\subset \Sigma(\bS)$ is a framed sphere of index 1, $\bS$ is a framed sphere of index 0 and the attaching sphere $a(\bS')$ intersects the belt sphere $b(\bS)$  transversely in one point. Then there is a diffeomorphism $\varphi: \Sigma\rightarrow \Sigma(\bS)(\bS')$ for which $F(e_{\Sigma(\bS),\bS'})\circ F(e_{\Sigma,\bS})= F(e_{\varphi})$
    \item  $F(e_{\Sigma,\bS})= F(e_{\Sigma,\overline{\bS}})$, where $\overline{\bS}$ is the same sphere with the opposite orientation.
\end{enumerate}
Then $F$ descends to the unique  functor $F:\CobL\rightarrow \mathrm{Vect}_{\C}$.
\end{corollary}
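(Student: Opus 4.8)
The strategy is to reduce everything to the isomorphism of categories $\mathcal{F}(\mathcal{G}^{\rm LC})/\mathcal{R}\xrightarrow{\simeq}\CobL$ established just above: once we have a functor out of $\mathcal{F}(\mathcal{G}^{\rm LC})/\mathcal{R}$, we transport it along the inverse of that isomorphism. Producing such a functor is then a formal exercise with universal properties.

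First I would invoke the universal property of the free category on a directed graph. The data of $F$ — an object of $\mathrm{Vect}_{\C}$ for each vertex of $\mathcal{G}^{\rm LC}$ and a linear map for each edge — extends uniquely to a functor $\widetilde F\colon\mathcal{F}(\mathcal{G}^{\rm LC})\to\mathrm{Vect}_{\C}$, namely the one sending a path of edges to the composite of the associated linear maps. On the subdiagram $\mathbf{Surf}^{\rm LC}$ this assignment is compatible with composition of diffeomorphisms; this is subsumed in hypothesis $(1)$, together with the normalisations $F(e_d)=\id$ for $d$ isotopic to $\id$ and $F(e_{\Sigma,\varnothing})=\id$.

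Next I would verify that $\widetilde F$ takes equal values on the two sides of each of the five generating families of relations in $\mathcal{R}$; since these generate the congruence defining the quotient, it follows that $\widetilde F$ descends to a functor $\overline F\colon\mathcal{F}(\mathcal{G}^{\rm LC})/\mathcal{R}\to\mathrm{Vect}_{\C}$. This is a term-by-term comparison: relations $(1)$–$(5)$ of $\mathcal{R}$ are precisely conditions $(1)$–$(5)$ of the hypothesis. The only subtlety is that the induced diffeomorphism $d^{\bS}$ in $(2)$ and the diffeomorphism $\varphi$ in $(4)$ are only well defined up to isotopy; but $\widetilde F$ is isotopy-invariant by $(1)$, so $F(e_{d^{\bS}})$ and $F(e_{\varphi})$ are unambiguous and the two sides of relations $(2)$ and $(4)$ are genuinely equal after passing to the quotient.

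Finally I would set $F:=\overline F\circ P^{-1}\colon\CobL\to\mathrm{Vect}_{\C}$, where $P\colon\mathcal{F}(\mathcal{G}^{\rm LC})/\mathcal{R}\xrightarrow{\simeq}\CobL$ is the isomorphism recalled above. Uniqueness is immediate: by the Cerf decomposition every morphism of $\CobL$ is a composite of simple cobordisms — mapping cylinders and index $1$ or $2$ surgery traces — which are exactly the images under $P$ of the edges of $\mathcal{G}^{\rm LC}$, so any functor on $\CobL$ whose values on objects and on these generating morphisms are the prescribed ones must coincide with $F$. I do not expect a genuine obstacle: the analytic content (existence of Morse data with the prescribed boundary behaviour, the Cerf moves, and the move theorem) has already been consumed in establishing $P$, so the remaining work is bookkeeping — chiefly making hypotheses $(1)$–$(5)$ line up exactly with the relations in $\mathcal{R}$, including reading the functoriality of $F$ on the diffeomorphism subcategory into hypothesis $(1)$.
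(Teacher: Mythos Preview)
Your argument is correct and is exactly the intended one: the paper states this corollary without proof, as an immediate consequence of the isomorphism $P\colon\mathcal{F}(\mathcal{G}^{\rm LC})/\mathcal{R}\xrightarrow{\simeq}\CobL$, and the only thing to do is the formal descent-and-transport you describe. Your observation that the functoriality $F(e_{d\circ d'})=F(e_d)\circ F(e_{d'})$ on the diffeomorphism subcategory must be read into hypothesis~(1) is apt --- the paper's relation~(1) in $\mathcal{R}$ contains $e_d\circ e_{d'}=e_{d\circ d'}$, while the corollary's hypothesis~(1) as written omits the corresponding condition on $F$; so strictly speaking this is a small gap in the \emph{statement} of the corollary rather than in your proof.
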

\vspace{0.5cm}

\bibliographystyle{plain}
\bibliography{biblio}

\end{document}